\definecolor{dark-red}{rgb}{0.5,0.15,0.15}
\definecolor{dark-blue}{rgb}{0.15,0.15,0.6}
\definecolor{dark-green}{rgb}{0.15,0.6,0.15}
\newtheorem{constructionletter}{Construction}
\newtheorem{conjletter}{Conjecture}
\numberwithin{equation}{section}
\newtheorem{thm}[equation]{Theorem}
\newtheorem{lemma}[equation]{Lemma}
\newtheorem{prop}[equation]{Proposition}
\newtheorem{conj}[equation]{Conjecture}
\newtheorem*{Crucial}{Crucial Observation}
\newtheorem{principle}[equation]{Principle}
\theoremstyle{definition}
\newtheorem{defi}[equation]{Definition}
\newtheorem{construction}[equation]{Construction}
 \newtheorem{example}[equation]{Example}
 \newtheorem{question}[equation]{Question}
\theoremstyle{remark}
\newtheorem{remark}[equation]{Remark}
\newcommand{\Escr}{\mathcal{E}}
\newcommand{\EE}{\mathcal{E}}
\newcommand{\KK}{\mathcal{K}}
\newcommand{\Lscr}{\mathcal{L}}
\newcommand{\Mscr}{\mathcal{M}}
\newcommand{\MM}{\mathcal{M}}
\newcommand{\Oscr}{\mathcal{O}}
\newcommand{\Fscr}{\mathcal{F}}
\newcommand{\Zscr}{\mathcal{Z}}
\newcommand{\Xscr}{\mathcal{X}}
\newcommand{\CT}
{\mathcal{T}}
\newcommand{\CZ}
{\mathcal{Z}}
\newcommand{\CB}
{\mathcal{B}}
\newcommand{\FT}
{\mathfrak{T}}
\newcommand{\FI}
{\mathfrak{I}}
\newcommand{\CN}
{\mathcal{N}}
\newcommand{\R}{\mathbb{R}}
\newcommand{\C}{\mathbb{C}}
\newcommand{\CP}{\mathbb{CP}}
\renewcommand{\H}{\mathbb{H}}
\newcommand{\Z}{\mathbb{Z}}
\newcommand{\Q}{\mathbb{Q}}
\newcommand{\Otop}{\mathcal{O}^{\mathrm{top}}}
\newcommand{\sm}{\wedge}
\newcommand{\tensor}{\otimes}
\renewcommand{\top}{\mathrm{top}}
\DeclareMathOperator{\id}{id}
\DeclareMathOperator{\pr}{pr}
\DeclareMathOperator{\op}{op}
\DeclareMathOperator{\rk}{rk}
\DeclareMathOperator{\ind}{ind}
\DeclareMathOperator{\res}{res}
\DeclareMathOperator{\tr}{tr}
\DeclareMathOperator{\pt}{pt}
\DeclareMathOperator{\SL}{SL}
\DeclareMathOperator{\TMF}{TMF}
\DeclareMathOperator{\Bil}{Bil}
\DeclareMathOperator{\Mod}{Mod}
\DeclareMathOperator{\gMod}{grMod}
\DeclareMathOperator{\QF}{QF}
\DeclareMathOperator{\Lat}{Lat}
\DeclareMathOperator{\Hom}{Hom}
\DeclareMathOperator{\Pic}{Pic}
\DeclareMathOperator{\Spec}{Spec}
\DeclareMathOperator{\AbGroup}{AbGroup}
\DeclareMathOperator{\QCoh}{QCoh}
\DeclareMathOperator{\Cob}{Cob}
\DeclareMathOperator{\Ho}{Ho}
\begin{document}

\title[A new approach to (3+1)-dimensional TQFTs via TMF]{A new approach to (3+1)-dimensional TQFTs via topological modular forms}

\author{Sergei Gukov}
\address{California Institute of Technology, Division of Physics, Mathematics and Astronomy, 1200 E. California Blvd., Pasadena, CA 91125, USA}
\email{\href{mailto:gukov@math.caltech.edu}{gukov@math.caltech.edu}}

\author{Vyacheslav Krushkal}
\address{Department of Mathematics, University of Virginia, Charlottesville VA 22904-4137}
\email{\href{mailto:krushkal@virginia.edu}{krushkal@virginia.edu}}

\author{Lennart Meier}
\address{Mathematisch Instituut,
Universiteit Utrecht, 
Budapestlaan 6,
3584 CD Utrecht}
\email{\href{mailto:f.l.m.meier@uu.nl}{f.l.m.meier@uu.nl}}

\author{Du Pei}
\address{Center for Quantum Mathematics, University of Southern Denmark, Odense 5220, Denmark}
\email{\href{mailto:dpei@imada.sdu.dk}{dpei@imada.sdu.dk}}

\begin{abstract}
In this paper, we present a construction toward a new type of TQFTs at the crossroads of low-dimensional topology, algebraic geometry, physics, and homotopy theory. It assigns TMF-modules to closed $3$-manifolds and maps of TMF-modules to $4$-dimensional cobordisms.
This is a mathematical proposal for one of the simplest examples in a family of ${\pi}_*({\rm TMF})$-valued invariants of $4$-manifolds which are expected to arise from $6$-dimensional superconformal field theories.
As part of the construction, we define TMF-modules associated with symmetric bilinear forms, using (spectral) derived algebraic geometry. The invariant of unimodular bilinear forms takes values in ${\pi}_*({\rm TMF})$, conjecturally generalizing the theta function of a lattice.  We discuss gluing properties of the invariants. We also demonstrate some interesting physics applications of the TMF-modules such as distinguishing phases of quantum field theories in various dimensions.
\end{abstract}

\maketitle
\setcounter{tocdepth}{1}
\tableofcontents

\section{Introduction}

Since Donaldson introduced gauge theory into the study of smooth 4-manifolds \cite{Donaldson:1983wm}, much progress in low-dimensional topology in the past half-century has been aided by insights from physics. Among the most powerful tools in this area are invariants closely connected with supersymmetric quantum field theories in four dimensions. These include the original Donaldson's polynomial, the Seiberg--Witten invariant \cite{Witten:1988ze,Witten:1994cg}, and their various generalizations. These invariants enjoy nice properties under cutting-and-gluing, giving rise to structures satisfying some of the axioms of topological quantum field theories (TQFTs). 

A recent proposal from theoretical physics suggests the existence of novel invariants based not on 4-dimensional quantum field theories, but instead on those in \emph{six} dimensions \cite{GPPV}. These invariants are still expected to be assembled into TQFTs, but with a crucial difference: the traditional ground rings $\C$, $\Z$, or polynomial rings are replaced with the $E_{\infty}$-ring spectrum $\mathrm{TMF}$ of \emph{topological modular forms}.\footnote{We refer to \cref{sec:BackgroundOnTMF} for background on $\TMF$.} As we discuss below, one of the benefits of working with modules over $\TMF$ is the existence of {\em nilpotent} elements. The essence of the proposal of \cite{GPPV} is illustrated as follows,
\begin{equation}
    \text{\{6d theories\}}\longrightarrow  \text{\{4d TQFTs over TMF\}}.
\end{equation}

In this paper, we take the first step in making this ambitious program mathematically rigorous by constructing the invariants corresponding to one of the simplest possible 6-dimensional theories. 

\subsection{Manifold invariants}
We construct a manifold invariant $\Zscr$, some 
 of whose properties we summarize next; see Sections \ref{sec: invariants of manifolds}, \ref{sec: computations of cobordisms and transfers} for more details.\footnote{A priori, our constructions depend on extra data (like bounding four-manifolds), and we only show partially that they do not depend a posteriori on these. For example, $\Zscr(X)$ for a closed oriented and simply-connected 4-manifold $X$ is only well-defined up to sign in our construction.}

\begin{constructionletter}\label{constrA} \ 
\begin{enumerate}
    \item Given a closed oriented 3-manifold $M$, $\Zscr(M)$ is a module over $\TMF$. 
    \item Given an oriented 4-manifold cobordism $M_0\to M_1$ admitting a relative handle decomposition with all handles of index 2, \[\Zscr(W)\colon \Zscr(M_0)[d] \to \Zscr(M_1)\]
    is a map of $\TMF$ modules 
    for a certain degree shift $d$.
    \item The invariant $\Zscr(X)$ of a closed simply-connected oriented 4-manifold $X$ is an element of $\pi_d(\TMF)$ where $d=3b^+(X) - 2b^-(X)$. If $X$ has a handle decomposition without 1- and 3-handles, $\Zscr(X)$ factors as a composition of maps corresponding to individual handles. 
    \item The invariants of 3- and 4-manifolds are multiplicative under connected~sum.
\end{enumerate}
\end{constructionletter}

To underline the significance of working with modules over $\TMF$, note that 
properties of exotic smooth structures on 4-manifolds impose a strong restriction on (3+1)-dimensional TQFTs. 
It follows from work of Wall and Gompf \cite{Wall64, Gom84} that if two closed smooth oriented 4-manifolds $X_1$ and $X_2$ are oriented homeomorphic, then they are {\em stably diffeomorphic}, that is $X_1\#^n(S^2\times S^2)$ and $X_2\#^n(S^2\times S^2)$ are diffeomorphic for some $n$. Under mild hypotheses, this implies $\mathcal{Z}(X_1) \cdot \mathcal{Z}(S^2\times S^2)^n  = \mathcal{Z}(X_2) \cdot \mathcal{Z}(S^2\times S^2)^n$. For the traditional ground field $\mathbb{C}$ of TQFTs, this implies $\mathcal{Z}(X_1) = \mathcal{Z}(X_2)$ if $\Zscr(S^2\times S^2) \neq 0$; thus such TQFTs do not distinguish exotic smooth structures (cf.\ \cite{ReutterSemisimple}).

In this article, we propose a way to construct a TQFT where $\Zscr(S^2\times S^2)$ is nonzero but nilpotent, a property made possible by working over $\TMF$. For example, $\Zscr(\CP^2) = \nu$ and (conjecturally) $\Zscr(S^2\times S^2) = \eta$, where $\eta\in \pi_1\TMF$ and $\nu\in \pi_3\TMF$ denote the images of the first two Hopf maps in $\TMF$. 
In general, the invariants based on 6-dimensional supersymmetric field theories, proposed in \cite{GPPV}, are expected to be powerful invariants of smooth 4-manifolds, where the nilpotence property of $\Zscr(S^2\times S^2)$ will play an important role. The simplest version, rigorously defined in this paper, depends only on the ``quadratic'' homological data: the intersection form on 4-manifolds and the torsion linking pairing on 3-manifolds. 

\subsection{Invariants for bilinear forms}
 \cref{constrA} is based on the following:

\begin{constructionletter}\label{constrB} \
    \begin{enumerate}
        \item To every symmetric bilinear form $b\colon \Z^d\otimes \Z^d \to \Z$, we define $\TMF$-modules $L^b$ and $L_b = \Hom_{\TMF}(L^{-b}, \TMF)$; 
        \item these are covariantly and contravariantly functorial (up to shift) for morphisms $(\Z^d, b) \to (\Z^{d'}, b')$;
        \item if $b$ is unimodular, $L_b$ is equivalent to a shift of $\TMF$ and the morphism $(\Z^0, 0) \to (\Z^d, b)$ induces a morphism $\TMF \to \TMF[2b^+-3b^-]$ corresponding to an element $\mathfrak{d}_b \in \pi_{3b^--2b^+}\TMF$, with $b^+$ (resp.\ $b^-$) the number of positive (resp.\ negative) eigenvalues. 
    \end{enumerate}
\end{constructionletter}
Up to a shift, the $\TMF$-module $\Zscr(M)$ associated to a closed $3$-manifold $M$ is $L_{b(W)}$ where $W$ is a simply-connected $4$-manifold bounding $M$; we show that this depends up to equivalence just on the torsion linking form on $M$. For a closed simply-connected oriented $4$-manifold $X$, the invariant $\Zscr(X)$ is $\mathfrak{d}_{-b(X)}$ for the intersection form $b(X)$. 

If $b$ is unimodular and positive definite of rank $d$, we can consider the image of $\mathfrak{d}_b \in \pi_{-2d}\TMF$ along the edge homomorphism $\pi_{-2d}\TMF \to M_{-d}$, the vector space of weakly holomorphic modular forms of weight $-d$. (This edge homomorphism is an isomorphism upon complexifying the source, but $\pi_{-2d}\TMF$ can contain torsion and further interesting integral information.) We provide evidence for the conjecture that $\mathfrak{d}_b$ maps to $\Delta^{-\frac{d}8}$ times the theta function $\Theta_b=\sum_{v\in \Z^d} q^{\frac12b(v,v)}$ if $b$ is even. Thus, our construction $\mathfrak{d}_b$ is (conjecturally) both a \emph{refinement of the classical Jacobi theta function} and a \emph{generalization} to unimodular bilinear forms that are not positive definite and even. 

\subsection{Quantum field theories and (spectral) derived algebraic geometry}
\Cref{constrB} is simultaneously motivated by (spectral) derived algebraic geometry and quantum field theories (QFTs). 

The Witten genus associates a modular form to a $2$-dimensional QFT  with $(0,1)$-supersymmetry (2d SQFT) \cite{witten1987elliptic} (see also \cite{tachikawa2025remarks} for a recent review). Significantly refining this, Segal, Stolz and Teichner conjectured that the spectrum $\TMF$ provides a deformation classification of 2d SQFTs, i.e.\ that $\TMF^k(M)$ is in bijection with deformation classes of $M$-indexed families of 2d SQFTs with gravitational anomaly given by $k$ \cite{segal1988elliptic, stolz2004elliptic, stolz2011supersymmetric}. (As QFTs do not have a generally accepted mathematical definition, the mathematical reader can take every mention of QFTs as a guiding heuristic and a challenge for a rigorous treatment.) Under this correspondence, the classes $\mathfrak{d}_b \in \pi_*\TMF$ are meant to correspond to lattice conformal field theories if $b$ is unimodular.\footnote{For an introduction of the subject of (supersymmetric) lattice CFTs from the mathematical viewpoint, with $b$ positive definite, see \cite{frenkel1989vertex} or \cite[Section 4.4]{FrenkelBenZvi}. The kind of 2d physical theories relevant for our construction are those with $(0,1)$ supersymmetry with $b$ being (possibly) indefinite. We refer the reader to \cite[Sec.~2.7]{GPPV} for more details.}

To motivate our construction of $L_b$ for general $b$, recall that a lattice conformal field theory associated to a symmetric bilinear form $b$ on $\Z^d$, which can also be viewed as a $U(1)^d$-WZW model, sits at the boundary of a $U(1)^d$-Chern--Simons theory of level given by $b$ (i.e.\ in the language of \cite{freed2014relative}, it is a \emph{relative quantum field theory}). For example, for positive definite $b$, this means that the partition function and Witten genus of the lattice conformal field theory are not just a normalized theta function (as in the unimodular case), i.e.\ a $\C$-valued function, but rather a ``function'' taking values in the $U(1)^d$-Chern--Simons theory on the $2$-torus. More precisely, we obtain a holomorphic section of a certain pre-quantum or Looijenga line bundle $\Lscr_b^{\C}$ on the moduli of complex elliptic curves with a flat connection on a topologically trivial $U(1)^d$-principal bundle (cf.\ \cite{axelrod1991geometric}). These moduli are equivalent to $\Escr_{\C}^{\times_{\Mscr_{\C}} d}$, where $\Mscr_{\C}$ denotes the moduli stack of complex elliptic curves and $\Escr_{\C}$ the universal complex elliptic curve over it.

Jumping to (spectral) derived algebraic geometry: like modular forms can be defined as sections of line bundles on $\Mscr_{\C}$, topological modular forms $\TMF$ are defined as global sections of a sheaf of $E_{\infty}$-ring spectra on $\Mscr$, the moduli stack of elliptic curve \cite{TMFBook, LurieSurvey}. Likewise, Lurie defined a canonical sheaf of $E_{\infty}$-spectra $\Oscr_{\Escr}^{\top}$ on the universal elliptic curve $\Escr$ and also sheaves of $E_{\infty}$-ring spectra $\Oscr_{\Escr^{\times_{\Mscr}d}}^{\top}$ on all powers $\Escr^{\times_{\Mscr}d}$ \cite{LurieSurvey, LurEllII}. For every symmetric bilinear $b$, we define a derived line bundle $\Lscr_b$ on $\Escr^{\times_{\Mscr}d}$ (i.e.\ an invertible $\Oscr_{\Escr^{\times_{\Mscr}d}}^{\top}$-module) refining the Looijenga line bundle $\Lscr_b^{\C}$ considered above. We define $L^b$ from \cref{constrB} as its global sections and $L_b$ as the $\Hom_{\TMF}(L^{-b}, \TMF)$.

All $L_b$ come with a natural class in $t_b\in \pi_0L_b$ from its covariant functoriality along $0\to \Z^d$. Recall that $\mathfrak{d}_b\in \pi_*\TMF$ should define the class the absolute lattice conformal field theory associated to $b$ for $b$ unimodular. In contrast, $t_b$ should define the class of the lattice conformal field theory associated to $b$ (for general $b$) viewed as a boundary theory for the corresponding $U(1)^d$-Chern--Simons theory of level $b$. In particular, we posit that $L_b$ should classify 2d SQFTs living at the boundary of $U(1)^d$-Chern--Simons theory of level $b$. 

More generally, we conjecture\footnote{We stress that the conjecture is necessarily vague since the necessary definitions of QFTs do not exist. Parts of the conjecture could be made precise in the setting of TQFTs or in the $1$-dimensional setting, but we do not pursue this here. We also stress that some forms of this conjecture have been folklore.} the following categorification\footnote{\emph{Categorification} can have different meanings. In our viewpoint here, $\TMF$-modules provide a categorification of $\TMF$, like $\C$-vector spaces provide a categorification of the complex numbers. This is in contrast to the philosophy of (de)categorification focusing on $K_0$, i.e.\ where vector spaces are a categorification of the integers.} of the Segal--Stolz--Teichner program, which is discussed in more detail in \cref{sec:TMFQFT}:
\begin{conjletter}\label{conjA}
    To every $3$-dimensional topological or supersymmetric quantum field theory $Z$, there is a $\TMF$-module $M(Z)$ such that the underlying space of $M(Z)$ classifies 2d SQFTs living at the boundary of $Z$ (i.e.\ relative SQFTs). 
\end{conjletter}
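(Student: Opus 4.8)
The statement is, as the authors emphasize, necessarily informal, so the plan is to make it precise in the settings where the relevant categories of field theories have been defined, to construct $M(Z)$ there by the mechanism that produced $L_b$ in \cref{constrB}, and to treat the general case by analogy. First I would pin down the target. In the topological case a $3$-dimensional TQFT $Z$ may be taken to be a symmetric monoidal functor out of an appropriate bordism $(\infty,3)$-category, and the $2$-dimensional theories living at its boundary --- in the sense of relative field theories \cite{freed2014relative} --- form a space $\Bscr(Z)$, namely the space of morphisms from the trivial $3$d theory to $Z$, i.e.\ of ``$Z$-twisted'' $2$d theories. In the $2$d supersymmetric case one replaces the discrete set of $2$d theories by the (conjectural, Segal--Stolz--Teichner) space of $2$d SQFTs, whose $M$-indexed families with anomaly $k$ have deformation classes $\TMF^k(M)$ \cite{stolz2011supersymmetric}, and $\Bscr(Z)$ is then the analogous space of relative $2$d SQFTs at the boundary of $Z$. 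The conjecture asks for a $\TMF$-module $M(Z)$, functorial in $Z$, with $\Omega^{\infty}M(Z)$ equivalent (after group-completion) to $\Bscr(Z)$.

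The construction of $M(Z)$ I propose directly generalizes \cref{constrB}. The key point is that $\TMF = \Gamma(\Mscr,\Otop)$, and that, conjecturally, the Segal--Stolz--Teichner correspondence realizes a $2$d SQFT as a global section of $\Otop$ whose value on a complex elliptic curve is its elliptic genus. Accordingly I would attach to a $3$d theory $Z$ a derived geometric object $\mathfrak{X}_Z$ over $\Mscr$ --- for $U(1)^d$ Chern--Simons theory this is $\Escr^{\times_{\Mscr}d}$, and more generally a derived moduli of (semistable) $G$-bundles on the universal elliptic curve, the elliptic analogue of the commuting variety of $G$, carrying Lurie's derived $\Otop$-structure --- together with a derived line bundle $\Lscr_Z$ on $\mathfrak{X}_Z$ refining the prequantum (Looijenga) bundle of $Z$; then set $M(Z):=\Gamma(\mathfrak{X}_Z,\Lscr_Z)$. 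Covariant functoriality along the inclusion of the trivial theory supplies a canonical class $t_Z\in\pi_0 M(Z)$, just as the class $t_b$ for $L_b$. The boundary-classification property then becomes the assertion that a point of $\Omega^{\infty}M(Z)$ --- a global section of $\Lscr_Z$ --- is an anomaly-cancelling coherent family of elliptic genera, which under a relative form of the Segal--Stolz--Teichner correspondence is precisely the datum of a $2$d SQFT at the boundary of $Z$.

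I would then verify this against the known cases and establish functoriality. For $Z$ the $U(1)^d$ Chern--Simons theory at level $b$ it must reproduce $L_b$ and the class $t_b$ of \cref{constrB}; stacking of theories must correspond to the tensor product of $\TMF$-modules, compatibly with the connected-sum multiplicativity of \cref{constrA}; and for a finite gauge theory with group $G$ one expects $M(Z)$ built from $G$-equivariant $\TMF$, matching the known classification of its boundary conditions. Functoriality --- a map of $3$d theories inducing a map of $\TMF$-modules --- should then follow formally from functoriality of $\mathfrak{X}_{(-)}$ and of derived global sections, as in the functoriality part of \cref{constrB}.

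The main obstacle is not any single computation but the foundational gap the statement itself acknowledges: there is no accepted definition of a general SQFT, the Segal--Stolz--Teichner identification of $\TMF$ with deformation classes of $2$d SQFTs is open, and ``relative $2$d theory for a $3$d theory $Z$'' has been made precise only in restricted, largely topological settings. Hence the realistic content of a proof splits into: (i) a rigorous theory of relative SQFTs and of the derived moduli $\mathfrak{X}_Z$; (ii) granting (i), the construction $Z\mapsto M(Z)=\Gamma(\mathfrak{X}_Z,\Lscr_Z)$ and its functoriality, which are tractable by the methods used here for $L_b$; and (iii) the equivalence $\Omega^{\infty}M(Z)\simeq\Bscr(Z)$, conditional on a relative form of the Segal--Stolz--Teichner correspondence and the deepest point. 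A more immediately achievable first target, which I would attack before the full conjecture, is the purely topological shadow: for $3$d TQFTs valued in $\TMF$-modules, construct $M(Z)$ and prove that its underlying $\infty$-groupoid coincides with the space of boundary conditions --- which reduces to a coherence statement for the cobordism hypothesis with singularities, with the string orientation of $\TMF$ as the essential input.
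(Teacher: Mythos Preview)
The statement is labeled a conjecture in the paper and is not proved there; the paper's own treatment (chiefly \cref{sec:TMFQFT}) is heuristic and physics-based. So there is no proof to compare against, only the paper's proposed picture and examples.

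Your outline and the paper's discussion run in opposite directions. The paper \emph{defines} $M(Z)$ as the space $\CB(\FT)$ of 2d $(0,1)$ boundary conditions of the 3d theory $\FT$, and argues via Stolz--Teichner that the $\CT$-module structure on $\CB(\FT)$ becomes a $\TMF$-module structure; the boundary-classification clause is then tautological and the content lies in computing $\CB(\FT)$ in examples and matching it to $L_b$. You instead propose to \emph{construct} $M(Z)=\Gamma(\mathfrak{X}_Z,\Lscr_Z)$ geometrically and then prove $\Omega^\infty M(Z)\simeq \Bscr(Z)$ as a separate (and, as you say, conditional) step. Your route yields a mathematically well-defined object wherever $\mathfrak{X}_Z$ exists (e.g.\ Lurie's $\MM_G$ for compact $G$); the paper's route isolates the entire conjectural content in the original Stolz--Teichner program rather than a relative enhancement of it.

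One concrete discrepancy worth fixing: your formula $M(Z)=\Gamma(\mathfrak{X}_Z,\Lscr_Z)$ specializes in the abelian Chern--Simons case to $L^b=\Gamma(\Lscr_b)$, not to $L_b=\Gamma(\Lscr_b^{\vee})^{\vee}$. The paper is deliberate about this distinction (end of \cref{sec: bilinear to TMF}, end of \cref{sec: relation to 2-equivariant TMF}, and the remark in \cref{sec:ChernSimons} contrasting the ``homology'' $L_b$ for a dynamical bulk with the ``cohomology'' $L^b$ for a non-dynamical one): it is $L_b$ that is posited to classify boundary conditions of the dynamical Chern--Simons theory. By \cref{thm:Lbduals} the two differ only by a shift, but if you want $M(Z)$ to reproduce $L_b$ on the nose you should either dualize, or replace global sections by the homological pushforward $p_!$ the paper invokes at the end of \cref{sec:TMFQFT}.
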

This both increases the scope of the Segal--Stolz--Teichner program (allowing also to conjecturally classify relative 2d SQFTs) and has the potential to study phases of 3d theories via $\TMF$ as well. We also conjecture the analogous statement replacing $\TMF$ by KO or KU if we reduce the dimension by one. 

\subsection{(3+1)-dimensional TQFTs}
In conclusion of this introduction, we revisit the manifold invariants in Construction~\ref{constrA} and discuss the TQFT context. The existing constructions of $(3+1)$-dimensional TQFTs can be roughly summarized as follows. (We mention just a few citations focusing on relevant aspects of the invariants, rather than including a comprehensive list of references.) A broad family of such theories, where $3$- and $4$-manifolds are endowed with spin$^{\rm c}$ structures, encompasses various kinds of Floer homology theories, cf.~\cite{KM07, OS06, Zemke}, and is geometric in nature. A stable homotopy refinement of Seiberg--Witten invariants and of Seiberg--Witten--Floer homology was given in \cite{BF04, Manolescu, KLS}. (While our work also uses methods of stable homotopy theory, we are not aware of a direct relation.) 
Several approaches are currently being pursued to extend the Khovanov--Rozansky homology \cite{Kh00, KhovanovRozansky} from links in $S^3$ to a functorial invariant of links in general $3$-manifolds and their cobordisms.
Another strand of research \cite{CraneYetter, DouglasReutter} aims to extend constructions with origins in quantum topology that have been well established in $(2+1)$ dimensions. 

The methods in this paper give a novel approach using methods of spectral derived algebraic geometry. 
We establish some TQFT properties and state Conjecture \ref{conj: TQFT} that our invariants give rise to a (3+1)-dimensional TQFT, valued in modules over the ring spectrum $\TMF$.  In Section \ref{sec: invariants of manifolds} we explain some technical issues that need to be resolved in order to approach this conjecture.

\subsection{Guide to the paper} \cref{sec:PhysicsMotivation} provides physical motivation for our constructions via six-dimensional quantum field theories, and \cref{sec:TMFQFT} discusses \cref{conjA}. Both sections are phrased in physical language and we recommend physics-oriented readers to start with these sections. \cref{sec:BackgroundOnTMF} provides background on $\TMF$ and \cref{sec: Looijenga line bundles} gives \cref{constrB}. Its basic properties are established in \cref{sec: Properties of Looijenga line bundles} and \cref{sec: cobordisms}. In \cref{sec: invariants of manifolds}, we switch to manifolds and explain \cref{constrA}. In \cref{sec: computations of cobordisms and transfers}, we compute some examples, and in \cref{sec:thetaFunctions} we discuss the conjectural relationship of our invariants to theta functions. 

\subsubsection*{Acknowledgments} We would like to thank Pavel Putrov, Stephan Stolz and Peter Teichner for insightful conversations. We especially thank Mayuko Yamashita for sharing with us her ideas about the relationship between topological Jacobi forms and $\TMF$-classes associated to lattices and  about $\TMF$-modules associated to Chern--Simons theories. 

SG was supported by the Simons Collaboration Grant on New Structures in Low-Dimensional Topology and by the NSF grant DMS-2245099. VK was supported in part by NSF Grant DMS-2405044. LM was supported by the NWO grant VI.Vidi.193.111. DP was supported partly by research grant 42125 from VILLUM FONDEN,  and ERC-SyG project No.~810573 ``Recursive and Exact New Quantum Theory.'' DP also wants to thank YMSC, Tsinghua University for hospitality during his visits.

\section{Physics motivation: from 6d SCFTs to invariants of manifolds}\label{sec:PhysicsMotivation}

\subsection{$\pi_*(\mathrm{TMF})$-valued invariants of 4-manifolds.}
In \cite{GPPV}, it was proposed that, using superconformal quantum field theories (SCFTs) in six dimensions, one can obtain
powerful smooth invariants of 4-manifolds valued in topological modular forms. The physics construction consists of  two steps. The first step is known as ``compactification,'' which leads to a 2d supersymmetric quantum field theory $T[M_4]$\footnote{Here and in \cref{sec:TMFQFT} we follow the physics convention where the dimension of a manifold is denoted by a subscript.}
\begin{equation*}
    \text{6d $(1,0)$ theory $T$ on $M_4$} \quad\leadsto\quad \text{2d $(0,1)$ theory $T[M_4]$}. 
\end{equation*}
Here $(1,0)$ and $(0,1)$ respectively denote the amount of supersymmetry of the 6d and 2d theory. In both cases, one has the minimal amount allowed to have super-Poincar\'e invariance. The second step involves interpreting a conjecture by Segal, Stolz and Teichner \cite{segal1988elliptic,stolz2004elliptic,stolz2011supersymmetric}, as the homotopy equivalence between the $E_\infty$-ring spectrum TMF and the space $\mathcal{\CT}$ of 2d $(0,1)$ theories,
\begin{equation*}
    \text{TMF}\simeq \mathcal{T}.
\end{equation*}
Here the grading on the right-hand side is given by the ``gravitational anomaly'' of the theory, which can be used to decompose
\begin{equation*}
\mathcal{\CT}=\bigcup_{d\in\mathbb{Z}} \mathcal{\CT}_d.
\end{equation*}
Although $T[M_4]$ will in general depend on the metric on $M_4$, one expects that when $b_2^+>1$, any two generic metrics on $M_4$ lead to two theories that can be smoothly deformed into each other. Therefore, the class $\CZ(M_4)$ in $\pi_*(\mathrm{TMF})$ is expected to be a smooth invariant.

\begin{example}
When the 6d theory $T$ is the ``free abelian tensor multiplet,''\footnote{More precisely, one also needs to decouple a non-compact scalar in the theories $T[M_4]$ and $T[M_3]$ after the compactification. The resulting theory is referred to as the ``toy model'' in \cite{GPPV}.}  the invariants for $S^2\times S^2$, $\mathbb{CP}^2$, and $\mathbb{CP}^2\#\mathbb{CP}^2$ are respectively
$\CZ(S^2\times S^2)=\eta$, $\CZ(\mathbb{CP}^2)=\nu$, and $\CZ(\mathbb{CP}^2\#\mathbb{CP}^2)=\nu^2$.
\end{example}

\subsection{4d TQFT over TMF}
Defining and computing (even at the level of physics) the new invariants when the 6d theory is no longer a free theory appears to be a daunting task. Our approach 
to overcoming this difficulty is to utilize a novel type of 4d TQFT structure not over the usual field of complex numbers $\C$, but over TMF. In other words, we now associate to a 3-manifold a TMF-module,
\begin{equation*}
    M_3\quad\leadsto\quad \CZ(M_3)\in \text{TMF-mod},
\end{equation*}
and to a 4d cobordism $W$ from $M_3$ to $M'_3$ a morphism of modules 
\begin{equation*}
    W\quad\leadsto \quad\CZ(W): \;\CZ(M_3)\rightarrow \CZ(M_3').
\end{equation*}
When $M_3=M_3'=\emptyset$, one has $\CZ(M_3)= \CZ(M_3')=$ TMF, and taking the image of 1 under $\CZ(W)$ in the homotopy group allows one to interpret $\CZ(W)\in\pi_*$TMF.

The existence of such TQFT follows from considering the compactification of the 6d theory on open 4-manifolds. If $M_4$ has a boundary $\partial M_4=M_3$, then, after compactifying the 6d theory on it, instead of having a stand-alone theory $T[M_4]$, it is now a boundary condition for a three-dimensional theory $T[M_3]$. In the set-up considered in \cite{GPPV}, $T[M_3]$ has 3d $\mathcal{N}=1$ supersymmetry, again the minimal amount in 3d. Then, as we will discuss in greater detail and generality in Section~\ref{sec:TMFQFT}, there is a natural TMF-module associated with it, which in physics is interpreted as the space of 2d $(0,1)$ boundary conditions. And $T[M_4]$ precisely takes value in this TMF-module as it is a specific boundary condition. When $W$ is a cobordism from $M_3$ to $M_3'$, $T[W]$ is now an interface between $T[M_3]$ and $T[M_3']$, and similarly it gives a map between two TMF-modules. 

The 4d TQFT that we are going to build corresponds to taking the 6d theory $T$ to be the ``free tensor multiplet.'' Although, being a free theory, it is not expected to lead to powerful invariants of four-manifolds that can detect smooth structures, it will serve as a testing ground for the existence and functionality for such TQFTs from more general 6d theories.

Furthermore, the computation of the TMF-modules are also of interest from the quantum field theory point of view. In this simplest case, $T[M_3]$ will be given by an abelian gauge theory, possibly with Chern--Simons levels, and, as we will discuss in Section~\ref{sec:TMFQFT}, the TMF-modules constructed are often compatible with our knowledge about the physics of such 3d theories, sometimes leading to new insights and new predictions.

\section{Background on topological modular forms}\label{sec:BackgroundOnTMF}
\subsection{Modular forms and moduli of elliptic curves}
A \emph{(weakly holomorphic) modular form of weight $k$} is a holomorphic function $f$ on the upper half-plane $\H$ such that 
\[f\left(\frac{a\tau +b}{c\tau +d}\right) = (c\tau+d)^kf(\tau)\]
for all $\tau\in \H$ and all $\begin{pmatrix}a&b\\c&d\end{pmatrix}\in \SL_2(\Z)$, having at most a pole at $\infty$. Equivalently, a modular form of weight $k$ is a holomorphic section of the line bundle $\omega_{\C}^{\tensor k}$ on the orbifold $[\H/\SL_2(\Z)]$, where $\omega_{\C}$ by definition descends from $\H \times \C$ via the $\SL_2(\Z)$-action 
\[\begin{pmatrix}a&b\\c&d\end{pmatrix}\cdot (\tau, z) = \left(\frac{a\tau +b}{c\tau +d}, (c\tau+d)z\right). \]

The orbifold $[\H/\SL_2(\Z)]$ classifies complex elliptic curves (which are all of the form $\C/(\tau\Z + \Z)$ for some $\tau\in \H$) \cite{HainModuli}. In algebraic geometry, we can take elliptic curves over arbitrary commutative rings (and even more generally) and thus it makes sense to replace $[\H/\SL_2(\Z)]$ by the moduli stack of elliptic curves $\MM$ defined over the integers. We will also denote $[\H/\SL_2(\Z)]$ by $\MM_{\C}$ as it agrees with the complex orbifold associated to the complexification of $\MM$. A map $\varphi\colon X\to \MM$ corresponds exactly to an elliptic curve over $X$. More precisely, $\MM$ carries the universal elliptic curve $\EE$ and $\varphi$ corresponds to the elliptic curve $\varphi^*\EE$. Over the complex numbers, $\EE$ can be constructed as $[\H \times \C/(\SL_2(\Z)\ltimes\Z^2)]$. 

As every elliptic curve, $\EE$ comes with a ``neutral element'' of its group structure; since it is an elliptic curve over $\MM$, this takes the form of a section $e\colon \MM \to \EE$ of the structure map $p\colon \EE \to \MM$. (The section $e$ defines a closed immersion and more precisely a relative Cartier divisor; we will sometimes identify $e$ with this divisor.) We obtain a line bundle $\omega = e^* \Omega^1_{\EE/\MM}$ as the pullback of the sheaf of differentials. Using that $dz$ defines a trivialization of the sheaf of differentials of a complex elliptic curve $\C/\Z+\tau\Z$, one produces an isomorphism of the complexification of $\omega$ with $\omega_{\C}$. 

The space $M_k = H^0(\MM;\omega^{\tensor k})$ of global sections of $\omega^{\tensor k}$ on $\MM$ is called the space of \emph{integral modular forms of weight $k$}. By the $q$-expansion principle, the map $M_k \to M_k^{\C}$ into complex modular forms identifies $M_k$ with the complex modular forms with integral $q$-expansion. For more details and background see e.g.\ \cite[Appendix A]{MeierOzornova} and the references therein. 

\subsection{Topological modular forms}
Topological modular forms $\TMF$ are a refinement of classical modular forms. They are a multiplicative cohomology theory whose coefficients $\TMF^*(\pt)$ resemble the ring of integral modular forms, but incorporate torsion. More precisely, there is a homomorphism $\TMF^{-2k}(\pt) \to M_k$, which becomes an isomorphism upon inverting $6$. Variants of topological modular forms are also known under the name of elliptic cohomology. The theory of topological modular forms was built by Hopkins together with Goerss, Miller and Mahowald and later revamped by Lurie. We refer to \cite{TMFBook,BehrensHandbook, Hopkins} and \cite{LurieSurvey} for details, background and references, or to \cite[Appendix A]{GPPV} for another short overview.

Both for the construction of $\TMF$ and for our purposes, it is important to view $\TMF$ not just as a multiplicative cohomology theory but as an $E_{\infty}$-ring spectrum. These are spectra with a highly-coherent commutative multiplication. Many frameworks exist to make this precise: for an $\infty$-categorical treatment see \cite{HA} or the more succinct \cite{GepnerHigher}. Alternatively, they can e.g.\ be modeled as commutative monoids in symmetric or orthogonal spectra; see e.g.\ \cite{MMSS}, \cite{Sch12} or \cite{Malkiewich}. For most of our paper, the details are not important; we just need a few facts, in particular:
\begin{itemize}
    \item $E_{\infty}$-ring spectra $R$ have a graded commutative ring of homotopy groups $\pi_*R$. They moreover represent a multiplicative cohomology theory such that $R^*(\pt) \cong \pi_{-*}R$.
    \item $E_{\infty}$-ring spectra behave in many ways like usual commutative rings (or commutative differential graded algebras). In particular, they have an $\infty$-category of modules $\Mod_R$; we will usually only work with its homotopy category $\Ho(\Mod_R)$, which is a classical (triangulated) category. 
\end{itemize}
Like modular forms arise as the global sections of a sheaf on $\MM$, so does $\TMF$: there is a sheaf $\Otop_{\MM}$ of $E_{\infty}$-ring spectra on (the \'etale site of) $\MM$ and $\TMF$ is defined as $\Gamma(\Otop_{\MM})$. The homotopy group sheaves $\pi_{2k}\Otop_{\MM}$ are isomorphic to $\omega^{\tensor k}$, while $\pi_{2k-1}\Otop_{\MM}$ is zero for all $k\in \Z$. This results in a spectral sequence 
\[E_2^{st} = H^s(\MM; \omega^{\tensor t}) \Rightarrow \pi_{2t-s}\TMF,\]
whose associated edge homomorphism $\pi_{2k}\TMF \to M_k$ we have already seen above. While an isomorphism after inverting $6$, it is neither injective nor surjective. For example, the discriminant $\Delta \in M_{12}$ is not in the image, only $24\Delta$. The kernel consists of torsion; many elements in the kernel come from the homomorphism $\pi_*\mathbb{S} \to \pi_*\TMF$ from the stable homotopy groups of spheres. This includes $\eta\in \pi_1\TMF$, $\nu\in \pi_3\TMF$, $\varepsilon \in \pi_8\TMF$, $\kappa \in \pi_{14}\TMF$ and $\overline{\kappa} \in \pi_{20}\TMF$. A depiction of the homotopy groups of $\TMF$ can be found in \cite{TMFBook}. 

\section{Looijenga line bundles} \label{sec: Looijenga line bundles}
As in the last section, let us denote by $\MM$ the moduli stack of elliptic curves and by $\EE$ the universal elliptic curve over $\MM$. Let $b$ be an integral symmetric bilinear form, i.e.\ a symmetric bilinear map $b\colon \Z^d \tensor \Z^d \to \Z$. (For brevity, we will assume from now on that all our bilinear forms are integral and symmetric.) Following Looijenga \cite{Looijenga}, one can associate to $b$ a line bundle on $\EE_{\C}^{\times_{\MM}d}$. We will discuss this below, as well as a novel extension to derived line bundles, yielding $\TMF$-modules upon taking global sections. 

\subsection{The complex picture}\label{sec:ComplexPicture}
Throughout this section, we will view the complex universal elliptic curve $\EE_{\C}$ as the orbifold $[\H \times \C/(\SL_2(\Z)\ltimes\Z^2)]$. Correspondingly, we view the fiber product $\EE_{\C}^{\times_{\MM}d}$  as the orbifold $[\H \times \C^d/\SL_2(\Z)\ltimes(\Z^{2})^d]$. Given an element $(\tau, z) \in \H \times \C^d$ and elements $A\in \SL_2(\Z)$ and $(m_1, m_2) \in (\Z^{2})^d$, the action is given by
\begin{align*}
    A\cdot (\tau, z)  &= \left(\frac{a\tau+b}{c\tau+d}\,,  \,(c\tau+d)^{-1}z\right) \\
    (m_1, m_2) \cdot (\tau, z) &= (\tau, z+m_1\tau + m_2).
\end{align*}
Following \cite{Rezk} we define:
\begin{defi}\label{def:ComplexLooijenga}
    Let $b$ be a bilinear form on $\Z^d$ and denote its $\C$-bilinear extension $\C^d\tensor \C^d \to \C^d$ by the same symbol. We define the \emph{complex Looijenga line bundle} $\Lscr_b^{\C}$ on $\EE_{\C}^{\times_{\MM}d}$ by defining an action of $\SL_2(\Z)\ltimes (\Z^{2})^d$ on $\H \times \C^d \times \C$ as follows (using notation as above): 
    \begin{align*}
    A\cdot (\tau, z, y)  &= \left(\frac{a\tau+b}{c\tau+d},  (c\tau+d)^{-1}z, e^{\pi i(c(c\tau+d)^{-1}b(z,z))} y\right) \\
    (m_1, m_2) \cdot (\tau, z, y) &= \left(\tau, z+m_1\tau + m_2, e^{-2\pi i(b(z, m_1) + \frac12b(m_1,  m_1)\tau)}y\right).
\end{align*}
\end{defi}
In the case that $b$ is positive definite and \emph{even} (i.e.\ $b(v,v) \in 2\Z$ for all $v\in \Z^d$), global sections $f$ of $\Lscr_b^{\C}\tensor \omega_{\C}^{\tensor k}$ are called a \emph{Jacobi form of weight $k$ and index $b$} in \cite[Definition 1.3]{BoylanSkoruppa} (see also \cite{KriegJacobi}) if their  Fourier expansion is well-behaved, i.e.\ if the pullback of $f$ to a function on $\H \times \C^d$ can be written as 
\begin{equation}\label{eq:JacobiFourier}f(\tau, z) = \sum_{n\geq 0}q^n\sum_{r\in L^{\vee}: b(r,r)\leq 2n}c(n,r)e^{2\pi ib(z,r)}.\end{equation}
Here, $q = e^{2\pi i\tau}$ and $L^{\vee} = \{z\in \C^d\,:\, b(z,m)\in \Z\text{ for all } m\in \Z^d$\}. 

The precise formulae are closely related to the transformation behavior of theta functions. If $b$ is positive definite, even and \emph{unimodular} (i.e.\ defining an isomorphism from $\Z^d$ to its dual), we define 
\begin{equation}\label{eq:multivariabletheta}\theta_b\colon \H\times\C^d \to \C, \qquad (\tau, z) \mapsto \sum_{v\in \Z^d}e^{\pi i \tau b(v,v) + 2\pi i b(v,z)}.\end{equation}
The function $\theta_b$ is an example of a Jacobi form of weight $\frac{d}2$ with respect to $b$ (see e.g.\ \cite[Section 4]{KriegJacobi} or \cite[Proposition 3.4, 3.5]{KacPeterson}). Of course, the theory of theta functions is much older than these sources, going back to the 19th century, and was invented and developed by Jacobi, Rosenhain, Riemann etc. 

From the formulae in \cref{def:ComplexLooijenga}, we directly deduce the following properties:
\begin{enumerate}
    \item (Additivity) $\Lscr_{b+b'}^{\C} \cong \Lscr_{b}^{\C} \tensor \Lscr_{b'}^{\C}$ and $\Lscr_{0}$ is the structure sheaf.
    \item (Functoriality) A morphism $\Z^{d_1} \to \Z^{d_2}$ corresponds to a matrix $A$, which induces a morphism $\Escr^A\colon \Escr^{\times_{\Mscr} d_1}_{\C} \to \Escr^{\times_{\Mscr} d_2}_{\C}$. Moreover, given a bilinear form $b \in \Bil(\Z^d)$ with matrix $B$, we can define a new bilinear form $A^*b \in \Bil(\Z^e)$ with matrix $A^TBA$. With this notation, there is an isomorphism $\Lscr_{A^*b}^{\C} \cong (\Escr^A)^*\Lscr(b)^{\C}$. 
\end{enumerate}
Moreover, there is an isomorphism $\Lscr^{\C}_{(1)}$ with $\Oscr_{\EE}^{\C}(e) \tensor \omega_{\C}$ \cite{BauerJacobi}. Here and in the following, we will often identify a bilinear form with its defining matrix; thus, e.g., $(1)$ stands for the bilinear form $\Z \times \Z \to \Z$, sending $(v,w)$ to $vw$. 

Beyond the transformation behavior of theta functions, there is also a second motivation for the definition of the Looijenga line bundles: there is essentially no other choice with the properties above. (A third motivation from algebraic topology is given in \cite{Rezk}.)

\begin{prop}\label{UniquenessOfComplexLooijenga}
     Suppose that for every bilinear form $b\in \Bil(\Z^d)$, there is a line bundle $\KK_b$ on $\Escr^{\times_{\Mscr} d}_{\C}$, satisfying additivity and functoriality as above. Assume moreover $\KK_{(1)} \cong \Oscr_{\EE}^{\C}(e) \tensor \omega_{\C}$. Then $\Lscr^{\C}_{b} \cong \KK_b$ for all $b\in \Bil(\Z^d)$.  
\end{prop}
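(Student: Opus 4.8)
The plan is to leverage additivity and functoriality to reduce the computation of an arbitrary $\KK_b$ to the single normalization $\KK_{(1)}$, together with a Picard-group computation of $\Escr_{\C}^{\times_{\Mscr} d}$ that shows the possible ambiguity vanishes. First I would record the elementary linear-algebra fact that every integral symmetric bilinear form $b$ on $\Z^d$ can be written, over $\Z$, as a sum of pullbacks of the rank-one form $(1)$ along various linear maps $\Z^d \to \Z$: concretely, $b(v,w) = \sum_i \ell_i(v)\ell_i(w)$ is impossible integrally in general, but one always has a presentation of the associated matrix $B$ as an integral combination $B = \sum_i n_i a_i a_i^T$ with $a_i \in \Z^d$ column vectors and $n_i \in \Z$ — for instance using the standard basis vectors $e_i$ (giving the diagonal entries via $e_i e_i^T$) and $(e_i + e_j)$ (to produce off-diagonal entries), since $2b(e_i,e_j) = b(e_i+e_j, e_i+e_j) - b(e_i,e_i) - b(e_j,e_j)$, and all diagonal "corrections" are then themselves multiples of rank-one forms. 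The upshot is that every $b$ lies in the subgroup of $\Bil(\Z^d)$ generated by pullbacks $A^*(1)$ of the rank-one form. Then additivity and functoriality force $\KK_b \cong \bigotimes_i \big((\Escr^{A_i})^* \KK_{(1)}\big)^{\otimes n_i}$, and likewise $\Lscr^{\C}_b$ satisfies the identical formula by the already-established properties (1) and (2); so $\KK_b$ and $\Lscr^{\C}_b$ are built from the same normalized pieces by the same recipe, hence isomorphic.

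The subtlety — and what I expect to be the main obstacle — is that the presentation $B = \sum n_i a_i a_i^T$ is not unique, so a priori the isomorphism type of $\bigotimes_i ((\Escr^{A_i})^*\KK_{(1)})^{\otimes n_i}$ could depend on the chosen presentation; to conclude we must know that $\KK_b$ is actually well-defined, i.e.\ that $\Lscr^{\C}_b \cong \KK_b$ rather than merely that both admit \emph{some} such expression. The clean way around this is to observe that $\KK_b \otimes (\Lscr^{\C}_b)^{\otimes -1}$ is a line bundle $\mathcal{N}_b$ on $\Escr^{\times_{\Mscr} d}_{\C}$ which is additive in $b$, functorial, and trivial for $b = (1)$; by the rank-one presentation above, $\mathcal{N}_b$ is then a tensor product of pullbacks of $\mathcal{N}_{(1)} \cong \Oscr$, hence trivial. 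So the argument becomes: reduce to $b=(1)$, where triviality of $\mathcal{N}_{(1)}$ holds by hypothesis, and propagate via additivity/functoriality. This sidesteps having to analyze $\Pic(\Escr^{\times_{\Mscr} d}_{\C})$ in detail.

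Concretely, the steps in order are: (i) prove the rank-one generation lemma for $\Bil(\Z^d)$ — show $\Bil(\Z^d)$ is generated as an abelian group by $\{A^*(1) : A \in \Hom(\Z^d,\Z)\}$; (ii) form $\mathcal{N}_b := \KK_b \otimes (\Lscr^{\C}_b)^{\otimes -1}$ and verify it inherits additivity and functoriality from those of $\KK$ and of $\Lscr^{\C}$ (properties (1), (2)); (iii) note $\mathcal{N}_{(1)} \cong \Oscr$ by the shared normalization $\KK_{(1)} \cong \Oscr_{\EE}^{\C}(e)\otimes\omega_{\C} \cong \Lscr^{\C}_{(1)}$; (iv) combine (i)–(iii): writing any $b$ via the generators and applying additivity and functoriality, $\mathcal{N}_b$ is a tensor product of pullbacks of $\mathcal{N}_{(1)} \cong \Oscr$, hence $\mathcal{N}_b \cong \Oscr$, i.e.\ $\KK_b \cong \Lscr^{\C}_b$. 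The only genuinely non-formal ingredient is step (i), and even that is a short explicit computation with symmetric integer matrices; the rest is bookkeeping with the two axioms.
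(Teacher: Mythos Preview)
Your proposal is correct and follows essentially the same strategy as the paper: both reduce, via additivity and functoriality, to the single normalization at $(1)$. The only organizational difference is that the paper carries out the reduction in two stages---first to the generators $(1)$ and the hyperbolic form $h=\begin{pmatrix}0&1\\1&0\end{pmatrix}$ using coordinate projections $\Z^d\to\Z$ and $\Z^d\to\Z^2$, and then disposes of $h$ via the isomorphism $h\oplus(1)\cong(1)\oplus(-1)\oplus(1)$---whereas your polarization identity $(e_i+e_j)(e_i+e_j)^T-e_ie_i^T-e_je_j^T$ collapses these into a single step by allowing pullbacks along arbitrary (non-projection) maps $\Z^d\to\Z$. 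Your difference-bundle device $\mathcal{N}_b=\KK_b\otimes(\Lscr^{\C}_b)^{-1}$ is a tidy way to make the presentation-independence explicit; the paper leaves this implicit in the phrase ``by additivity and functoriality it remains to show\ldots''.
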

\begin{proof}
    Every symmetric integer matrix is a sum of symmetric matrices with at most two nonzero entries, which equal $\pm 1$. Thus, every bilinear form on $\Z^d$ is a sum of bilinear forms pulled back along a projection $\Z^d \to \Z^2$ or $\Z^d\to \Z$ from $h = \begin{pmatrix}0&1\\1&0\end{pmatrix}$ or $(1)$, respectively, or their negatives. By additivity and functoriality, it remains to show that $\Lscr^{\C}_{h} \cong \KK_h$. 

    Note that $h\oplus (1) \cong (1) \oplus (-1) \oplus (1)$. Since pullback along the first two factors of $h\oplus (1)$ is $h$ again and $\Lscr_{(-1)}^{\C} \cong \left(\Lscr_{(1)}^{\C}\right)^{-1}$ by additivity (and likewise for $\KK$), the result follows. (See \cref{rem:Poincare} for more details.)
\end{proof}

This implies that every approach to Looijenga line bundles satisfying the same formal properties and normalization will yield (up to isomorphism) the same line bundles.

\subsection{Derived Looijenga line bundles and $\TMF$-modules associated to bilinear forms}\label{sec: bilinear to TMF}

As recalled above, the ring spectrum $\TMF$ arises as the global sections of a sheaf of $E_{\infty}$-ring spectra $\Otop_{\Mscr}$ on $\Mscr$. Lurie also constructs sheaves of $E_{\infty}$-ring spectra $\Otop_{\Escr^d}$ on $\Escr^{\times_{\Mscr} d}$.\footnote{More precisely, it is explained in Section 4.1 of \cite{MeierDecompositions} how the results of \cite{LurEllII} define a sheaf $\Otop_{\Escr}$ on $\Escr$, making $(\Escr, \Otop_{\Escr})$ into a derived stack (or nonconnective spectral Deligne--Mumford stack in the parlance of \cite{SAG}). This allows to take the $d$-fold iterated fiber product of $(\Escr, \Otop_{\Escr})$ over $(\Mscr, \Otop_{\Mscr})$. The flatness of $\Escr \to \Mscr$ ensures that the underlying stack of this fiber product is precisely $\Escr^{\times_{\Mscr}d}$. This equips the latter stack with the required sheaf of $E_{\infty}$-ring spectra. See also \cite[Lemma B.3]{MeierDecompositions}.} For us, a \emph{derived line bundle} on $\Escr^{\times_{\Mscr} d}$ will be an $\Otop_{\Escr^{\times_{\Mscr}d}}$-module that is locally trivial, i.e.\ is \'etale locally equivalent to $\Otop_{\Escr^d}$. Our goal in this section is to discuss derived analogs $\Lscr_b$ of the Looijenga line bundles from the preceding section. This will be the key input to define $\TMF$-modules associated to bilinear forms and to $3$-manifolds. As taking $\pi_0$ of $\Lscr_b$ yields a (non-derived) Looijenga line bundle $\Lscr_b^{\Z}$ on $\Escr^{\times_{\Mscr} d}$ without the need to complexify, we will discuss this theory in parallel. We will only give a sketch of the construction. Full details will appear in \cite{MeierTwistings}.

To associate to a $b\in\Bil(\Z^d)$ a line bundle $\Lscr_b^{\Z}$ and a derived line bundle $\Lscr_b$ on $\Escr^{\times_{\Mscr} d}$, we will follow the following three rules:
\begin{enumerate}
    \item (Additivity) $\Lscr_{b+b'} \simeq \Lscr_{b} \tensor \Lscr_{b'}$ (and similar for $\Lscr_b^{\Z}$).
    \item (Functoriality) A morphism $\Z^e \to \Z^d$ corresponds to a matrix $A$, which induces a morphism $\Escr^A\colon \Escr^{\times_{\Mscr} e} \to \Escr^{\times_{\Mscr} d}$. Given a bilinear form $b \in \Bil(\Z^d)$ with matrix $B$, we can define a new bilinear form $A^*b \in \Bil(\Z^e)$ with matrix $A^TBA$. We demand an equivalence $\Lscr_{A^*b} \simeq (\Escr^A)^*\Lscr(b)$ (and similar for $\Lscr_b^{\Z}$). 
    \item (Normalization) $\Lscr_{(1)} \simeq \Otop_{\Escr}(e)[-2]$ and $\Lscr_{(1)}^{\Z} \cong \Oscr_{\Escr}(e) \tensor \omega$.  
\end{enumerate}

Here, $[-2]$ denotes a shift so that $\pi_k\Otop_{\Escr}(e)[-2] \cong \pi_{k+2}\Otop_{\Escr}(e)$. Moreover, $\Otop_{\Escr}(e)$ is the dual of $\Otop_{\Escr}(-e)$, which in turn is the fiber of the morphism $\Otop_{\Escr} \to e_*\Otop_{\Mscr}$ ``evaluating at the zero-section $e\colon \Mscr\to \Escr$'' that is adjoint to the equivalence $e^*\Otop_{\Escr} \simeq \Otop_{\Mscr}$. 

Analogously to \cref{UniquenessOfComplexLooijenga}, one shows that the three properties above define the derived and integral Looijenga line bundles uniquely up to equivalence/isomorphism, but a priori these conditions might overdetermine the (derived) line bundle. 

To be more precise, let $\Lat$ be the category of lattices, i.e.\ finitely generated free abelian groups. (Equivalently, its objects might be viewed as $\Z^d$ for $d\geq 0$ and its morphisms as integral matrices, but the more abstract perspective helps to get the correct functoriality.) For a lattice $\Lambda$, we can define $\Escr \tensor \Lambda$ so that $\Escr \tensor \Z^d = \Escr^{\times_{\Mscr}d}$.\footnote{More precisely, the $(2,1)$-category of abelian group objects in Deligne--Mumford stacks over $\Mscr$ is tensored over abelian groups, and $\Escr$ has the structure of such an abelian group object. Likewise, the $\infty$-category of abelian group objects in derived stacks over $\Mscr$ is tensored over abelian groups, and $(\Escr, \Otop_{\Escr})$ has the structure of such an abelian group object.} Consider the three functors 
\begin{align*}
    \Bil \colon \Lat^{\op}\to \AbGroup, &\qquad \Lambda \mapsto \Bil(\Lambda),\\
    \Pic^{\Z}\colon \Lat^{\op}\to \AbGroup, &\qquad \Lambda \mapsto \Pic(\Escr\tensor \Lambda),\\
    \Pic^{\top}\colon \Lat^{\op}\to \AbGroup, &\qquad \Lambda \mapsto \Pic(\Escr\tensor \Lambda, \Otop_{\Escr\tensor \Lambda}),
\end{align*}
the last two yielding isomorphism classes of line bundles and derived line bundles, respectively. The functoriality and additivity requirement above are subsumed in asking for \emph{natural transformations} $\Lscr_{-}^{\Z}\colon \Bil \to \Pic^{\Z}$ and $\Lscr_{-}\colon \Bil \to \Pic^{\top}$. 

To construct these, we consider a fourth functor $\Lat^{\op}\to \AbGroup$, namely the functor $\Z[(-)^{\vee}]$, mapping $\Lambda$ to the free abelian group $\Z[\Lambda^{\vee}]$ on the dual of $\Lambda$. This functor is free in the sense that a natural transformation out of it to some $F$ is equivalent data to specifying an element in $F(\Z)$ as the image of $1\cdot [\id]\in \Z[\Z^{\vee}]$. Thus, we obtain natural transformations $\Z[()^{\vee}]\to \Pic^{\Z}$ and $\Z[()^{\vee}]\to \Pic^{\top}$ sending $1\cdot [\id]$ to $\omega$ and $\Otop(e)[-2]$, respectively. 

Likewise, there is a natural transformation $\Z[()^{\vee}]\to \Bil$, sending $1\cdot [\id]$ to $(1)$. Concretely, this associates to each $\lambda\in \Lambda^{\vee}$ the bilinear form $(v,w)\mapsto \lambda(v)\cdot \lambda(w)$. The following elementary result will appear in \cite{MeierTwistings}.
\begin{Crucial} The map $\Z[()^{\vee}]\to \Bil$ is a surjection with kernel generated by three explicit relations. Thus, defining integral and derived Looijenga line bundles reduces to checking these three relations. \end{Crucial}

In our situation, these three relations boil down to the following: 
\begin{enumerate}
    \item The pullbacks of $\Lscr_{(1)} \simeq \Otop_{\Escr}(e)[-2]$ and $\Lscr_{(1)}^{\Z} \cong \Oscr_{\Escr}(e) \tensor \omega$ along $e\colon \Mscr \to \Escr$ are trivial, i.e.\ the structure sheaf. In the derived setting, we will show this in \cref{lem:Normalization} below. 
    \item The pullback $[-1]^*\Lscr_{(1)}$ of $\Lscr_{(1)} \simeq \Otop_{\Escr}(e)[-2]$ along $[-1]\colon \Escr \to \Escr$ is equivalent to $\Lscr_{(1)}$ again; likewise, $[1]^*\Lscr_{(1)}^{\Z} \cong \Lscr_{(1)}^{\Z}$. This follows (in the derived case) since $[-1]e = e$ implies that $[-1]^*\Otop_{\Escr}\to [-1]^*e_*\Otop_{\Mscr}$ is equivalent to the morphism $\Otop_{\Escr}\to e_*\Otop_{\Mscr}$ whose fiber is the dual of $\Otop_{\Escr}(e)$. The non-derived case follows by passing to $\pi_0$.  
    \item The (derived) theorem of the cube applied to $\Lscr_{(1)} \simeq \Otop_{\Escr}(e)[-2]$ and $\Lscr_{(1)}^{\Z} \cong \Oscr_{\Escr}(e) \tensor \omega$. The classical form can be found e.g.\ in \cite{AndoHopkinsStrickland}, and the derived form will be proven using equivariant elliptic cohomology in \cite{MeierTwistings}.
\end{enumerate}

Thus, we obtain in the derived case the following result, a full proof of which will appear in the forthcoming \cite{MeierTwistings}: 
\begin{thm}\label{thm:DerivedLooijenga}
    We can associate to bilinear forms derived Looijenga line bundles satisfying additivity, functoriality and normalization, i.e.\ a natural transformation $\Bil\to \Pic^{\top}$ sending $(1)$ to $\Otop_{\Escr}(e)[-2]$.\end{thm}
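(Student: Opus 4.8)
The plan is to build the natural transformation $\Bil \to \Pic^{\top}$ by factoring it through the free functor $\Z[(-)^{\vee}]$, using the \enquote{Crucial Observation} as the organizing tool. Since $\Z[(-)^{\vee}]$ is free, the natural transformation $\Z[(-)^{\vee}] \to \Pic^{\top}$ sending $1\cdot[\id]$ to $\Otop_{\Escr}(e)[-2]$ exists automatically; the content is that it descends along the surjection $\Z[(-)^{\vee}] \to \Bil$. By the Crucial Observation this amounts to verifying that the images of the three generating relations of the kernel are carried to the trivial (i.e.\ equivalent to the structure sheaf) derived line bundle, which we will check one relation at a time as listed in items (1)--(3) preceding the theorem.

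First I would verify relation (1), that $e^*\Otop_{\Escr}(e)[-2]$ is trivial. The defining fiber sequence $\Otop_{\Escr}(-e) \to \Otop_{\Escr} \to e_*\Otop_{\Mscr}$ together with the projection formula (or the self-intersection formula) identifies $e^*\Otop_{\Escr}(-e)$ with $\omega^{\otimes(-1)}[2]$ up to the shift conventions, hence $e^*\Otop_{\Escr}(e)[-2] \simeq \omega[-2][-2]\cdot(\text{shift})$; the point is that after the prescribed $[-2]$ shift the pullback is $\Otop_{\Mscr}$. This is the content of \cref{lem:Normalization} referenced in the excerpt, so I would simply invoke it. Second, relation (2), invariance under $[-1]^*$, follows exactly as sketched: because $[-1]\circ e = e$, pulling back the fiber sequence defining $\Otop_{\Escr}(-e)$ along $[-1]$ yields a fiber sequence equivalent to the original one, so $[-1]^*\Otop_{\Escr}(e) \simeq \Otop_{\Escr}(e)$ compatibly with the shift; no extra work beyond unwinding definitions.

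The main obstacle will be relation (3), the \emph{derived theorem of the cube}. Classically this says that for line bundles on an abelian variety (or abelian group scheme) $A$, the line bundle $m_{123}^*L \otimes m_{12}^*L^{-1} \otimes m_{13}^*L^{-1} \otimes m_{23}^*L^{-1} \otimes m_1^*L \otimes m_2^*L \otimes m_3^*L$ on $A^3$ is trivial (where $m_S$ are the partial sum maps), and this is precisely what forces the Looijenga assignment to be additive and bilinear rather than merely defined on diagonal forms. In the derived setting one cannot argue via the classical cohomology-and-flatness proof directly; instead, as the excerpt indicates, the argument runs through \emph{equivariant elliptic cohomology} — the derived line bundle $\Otop_{\Escr}(e)[-2]$ on $\Escr = \Escr \otimes \Z$ is identified with (a shift of) the $T$-equivariant $\TMF$ of a point for the circle $T$, or more precisely with the structure sheaf twisted by a character, and the theorem of the cube becomes a statement about the behavior of equivariant $\TMF$ under addition of characters, which is built into Lurie's construction of $\Otop_{\Escr}$ via its universal property for oriented elliptic curves. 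The plan is to cite this identification and the resulting cube relation from \cite{LurEllII} / the forthcoming \cite{MeierTwistings}, since a self-contained derived proof would require setting up equivariant elliptic cohomology machinery that is outside the scope of this paper.

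Finally, having checked all three relations, the Crucial Observation delivers the desired natural transformation $\Lscr_{-}\colon \Bil \to \Pic^{\top}$, and additivity and functoriality hold by construction since they were built into the source $\Bil$ viewed as a functor on $\Lat^{\op}$ and into the factorization through $\Z[(-)^{\vee}]$; normalization is the defining property that $1\cdot[\id] \mapsto \Otop_{\Escr}(e)[-2]$, which maps to $(1)\in\Bil(\Z)$. I would close by remarking that exactly the same argument with $\Pic^{\top}$ replaced by $\Pic^{\Z}$ and $\Otop_{\Escr}(e)[-2]$ replaced by $\Oscr_{\Escr}(e)\otimes\omega$ gives the integral statement, with relation (3) now being the classical theorem of the cube from \cite{AndoHopkinsStrickland} and relations (1)--(2) obtained from the derived ones by passing to $\pi_0$.
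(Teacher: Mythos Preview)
Your proposal is correct and follows essentially the same approach as the paper: both factor through the free functor $\Z[(-)^{\vee}]$, invoke the Crucial Observation to reduce to the three listed relations, verify (1) via \cref{lem:Normalization}, verify (2) from $[-1]\circ e = e$, and defer (3), the derived theorem of the cube, to equivariant elliptic cohomology and the forthcoming \cite{MeierTwistings}. The paper itself does not give a self-contained proof here, so your level of detail matches what the paper provides.
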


For many purposes, it is not enough to treat the derived line bundles just up to equivalence, but we also have to keep track of equivalences between them. Thus, we will need to refine the previous statement to one on the level of \emph{Picard groupoids}. Let $\mathrm{PicGrpds}$ be the $2$-category of Picard groupoids, i.e.\ symmetric monoidal groupoids in which every object is invertible: its morphisms are (strongly) symmetric monoidal functors and the $2$-morphisms are monoidal natural transformations. We denote by $\mathcal{P}ic^{\top}$ the (pseudo-)functor $\Lat^{\op}\to \mathrm{PicGrpds}$  sending each $\Lambda$ to the Picard groupoid with objects derived line bundles on $(\Escr\tensor \Lambda, \Otop_{\Escr\tensor\Lambda})$ and as morphisms equivalences between them up to homotopy. Note that we can view $\AbGroup$ as a subcategory of $\mathrm{PicGrpds}$, namely as those Picard groupoids with no non-identity morphisms. 
\begin{conj}\label{conj:hypothesis}
    There is an (essentially unique) natural transformation $\Lscr$ from the functor 
    \[\Bil\colon \Lat^{\op} \to \AbGroup \subset \mathrm{PicGrpds}\]
    to the functor 
    \[\mathcal{P}ic^{\top}\colon \Lat^{\op} \to \mathrm{PicGrpds}\]
    that sends $vw \in \Bil(\Z)$ to $\Otop(e)[-2]$.\footnote{As these are functors going into a $2$-category, one should be a bit more precise on what this means. (This becomes a little easier when formulated using the corresponding unstraightened fibrations.) For us it is especially important that we obtain \emph{canonical} equivalences $\Lscr_{b+b'} \simeq \Lscr_b \otimes \Lscr_{b'}$ and $\Lscr_0 \simeq \Oscr_{\Escr^{\mathrm{top}}\tensor \Lambda}$ if $0$ is the zero-form on some lattice $\Lambda$. This implies in particular that $\Lscr_{-b} \simeq \Lscr_{b}^{\vee}$ canonically.}
\end{conj}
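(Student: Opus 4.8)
The plan is to upgrade \cref{thm:DerivedLooijenga}, which only produces the line bundles $\Lscr_b$ and equivalences between them up to homotopy, to a statement that remembers the equivalences themselves, by exploiting the presentation of $\Bil$ furnished by the Crucial Observation. Concretely, a (pseudo-)natural transformation $\Lscr\colon \Bil\to\mathcal{P}ic^{\top}$ of functors valued in the $2$-category $\mathrm{PicGrpds}$ consists of, for each lattice $\Lambda$, a (strong) symmetric monoidal functor from the discrete Picard groupoid on the abelian group $\Bil(\Lambda)$ to $\mathcal{P}ic^{\top}(\Lambda)$, together with coherent comparison $2$-cells for the morphisms $\Z^e\to\Z^d$ of $\Lat$. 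The symmetric monoidal structure encodes the additivity equivalences $\Lscr_{b+b'}\simeq\Lscr_b\tensor\Lscr_{b'}$ together with the associativity, commutativity and unit constraints relating them, while the $2$-cells encode functoriality and its coherences; so the point is to produce all of this data simultaneously and canonically.

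First I would reduce to the free functor $\Z[(-)^{\vee}]$. By the coherence theorem for free symmetric monoidal groupoids, a symmetric monoidal functor out of the free Picard groupoid on a set is determined, up to a contractible space of choices, by the underlying map of sets; applying this fibrewise and using that $\Z[\Lambda^{\vee}]$ is the discrete Picard groupoid on the free abelian group on the set $\Lambda^{\vee}$, one obtains a \emph{canonical} pseudo-natural transformation $\Z[(-)^{\vee}]\to\mathcal{P}ic^{\top}$ determined by the single object $\Otop_{\Escr}(e)[-2]\in\mathcal{P}ic^{\top}(\Z)$ — this is exactly the construction preceding \cref{thm:DerivedLooijenga}, but now performed at the level of groupoids rather than of isomorphism classes. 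It then remains to show that this transformation factors through the surjection $\Z[(-)^{\vee}]\twoheadrightarrow\Bil$ of the Crucial Observation, and that such a factorization is essentially unique.

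By the Crucial Observation the kernel is generated by three explicit relations, so the factorization amounts to producing, for each relation, a canonical equivalence in $\mathcal{P}ic^{\top}$ witnessing it, and then checking that these witnesses are mutually compatible — i.e.\ that no obstruction arises from the ``relations among the relations''. The three witnesses are: (1) the triviality of $e^*\Otop_{\Escr}(e)[-2]$, supplied by \cref{lem:Normalization}; (2) the equivalence $[-1]^*\Otop_{\Escr}(e)[-2]\simeq\Otop_{\Escr}(e)[-2]$, supplied canonically by the identity $[-1]\circ e = e$ and the defining cofibre sequence for $\Otop_{\Escr}(-e)$; and (3) the derived theorem of the cube applied to $\Otop_{\Escr}(e)[-2]$, which is the one genuinely new input and which is to be established via equivariant elliptic cohomology in \cite{MeierTwistings}. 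Each such witness is unique only up to a unit, i.e.\ up to the action of $\pi_1$ of the relevant Picard groupoid, which is assembled from $(\pi_0\Gamma\Otop_{\Escr\tensor\Lambda})^{\times}$ together with a contribution of $\eta$ coming from the stable $k$-invariant; normalizing the witnesses and verifying their compatibility reduces to a vanishing statement in a small obstruction group built from these units over the various $\Escr\tensor\Lambda$, which is checked directly.

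The main obstacle is precisely this last point: promoting the theorem of the cube from a statement about equivalence classes to one about a \emph{canonical} equivalence, and then checking that the three chosen witnesses cohere, since a priori these coherences could be obstructed by torsion in $\pi_*\TMF$. This is why the result is phrased as a conjecture here, with the full argument deferred to \cite{MeierTwistings}. Essential uniqueness then follows formally: any two natural transformations $\Lscr,\Lscr'$ with $\Lscr_{(1)}\simeq\Lscr'_{(1)}\simeq\Otop_{\Escr}(e)[-2]$ differ by a monoidal self-natural-transformation of $\mathcal{P}ic^{\top}$ restricted along $\Bil$ that fixes $(1)$, hence by a section of a sheaf of units which the same obstruction computation shows to be trivial; the canonical equivalences $\Lscr_{b+b'}\simeq\Lscr_b\tensor\Lscr_{b'}$, $\Lscr_0\simeq\Otop_{\Escr\tensor\Lambda}$, and consequently $\Lscr_{-b}\simeq\Lscr_b^{\vee}$ promised in the footnote are then extracted from the symmetric monoidal structure produced above.
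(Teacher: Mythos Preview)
The paper contains no proof of this statement: it is explicitly labeled and treated as a \emph{conjecture}, with no argument offered beyond the surrounding discussion. So there is nothing in the paper to compare your proposal against.

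Your outline is a reasonable sketch of how one might try to upgrade the strategy behind \cref{thm:DerivedLooijenga} (itself only sketched here, with full proof deferred to \cite{MeierTwistings}) from $\Pic^{\top}$ to $\mathcal{P}ic^{\top}$, and indeed you correctly identify the key obstacle --- promoting the three relation-witnesses, especially the derived theorem of the cube, to \emph{canonical} equivalences and verifying their higher coherences --- as precisely the reason the authors state this as a conjecture rather than a theorem. You say as much yourself. But as a proof, your proposal has a genuine gap at exactly this point: the phrase ``which is checked directly'' for the obstruction computation is doing all the work, and neither you nor the paper carries it out. Until that computation (and the canonical form of the derived theorem of the cube) is actually performed, this remains a plan rather than a proof, which is consistent with the paper's own stance.
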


Finally, we explain how to associate $\TMF$-modules to bilinear forms. To that purpose, let us denote for an object $X$ by $X^{\vee}$ the appropriate \emph{dual} of $X$. Thus, for a $\TMF$-module $M$, we define $M^{\vee}$ as the $\Hom$-spectrum $\Hom_{\TMF}(M, \TMF)$; for a derived line bundle $\Lscr$ on a derived stack $(\Xscr, \Otop_{\Xscr})$, we define $\Lscr^{\vee}$ as the Hom-sheaf $\mathcal{H}om_{\Otop_{\Xscr}}(\Lscr, \Otop_{\Xscr})$. In the former case, $M^{\vee}$ retains the structure of a $\TMF$-module, and in the latter case, $\Lscr^{\vee}$ is a derived line bundle again. 
\begin{defi}\label{def:LbLb}
    For a bilinear form $b$, we define 
    \begin{align*}
        L^b &= \Gamma(\Lscr_b), \\
        L_b &= \Gamma(\Lscr_b^{\vee})^{\vee}
    \end{align*}
\end{defi}
The modules $L_b$ will figure in the definition of our potential $(3+1)$-dimensional TQFT with values in $\TMF$-modules. Since $\Lscr_{b}^{\vee}\simeq \Lscr_{-b}$ by the additivity of derived Looijenga line bundles, we have $L_b \simeq (L^{-b})^{\vee}$. 

The notation with the upper and lower index is motivated by the relation to equivariant $\TMF$. Building upon the works of Grojnowski and Lurie, Gepner and the third author have defined in \cite{GepnerMeier} a theory of $U(1)^d$-equivariant $\TMF$. The key is the definition of a symmetric monoidal (reduced) equivariant elliptic cohomology functor
\[\Oscr_{\Escr^{\times_{\Mscr}d}}^{-}\colon (\text{finite pointed }U(1)^d\text{-CW complexes})^{\op} \to \QCoh(\Escr^{\times_{\Mscr}d}, \Otop_{\Escr^{\times_{\Mscr}d}})\]
to quasi-coherent $\Otop_{\Escr^{\times_{\Mscr}d}}$-modules. The actual equivariant $\TMF$-cohomology of a $U(1)^d$-space $X$ is then defined as the (homotopy groups of the) global sections of $\Oscr_{\Escr^{\times_{\Mscr}d}}^{X_+}$, where $X_+$ is $X$ with a disjoint basepoint.

Since the value on $S^0$ is the structure sheaf, the $U(1)^d$-equivariant $\TMF$-cohomology of a point is computed as (the homotopy groups of) $\Gamma(\Otop_{\Escr^{\times_{\Mscr}}d})=L^0$. There is the general principle that twisted cohomology or cohomology with local coefficients arises as global sections of invertible systems. As the global sections of the invertible $\Otop_{\Escr^{\times_{\Mscr}d}}$-modules $\Lscr_b$, the (homotopy groups of the) $\TMF$-modules $L^b$  can be seen as twisted equivariant $\TMF$-\emph{cohomology} of a point. (We will comment more about this twisting in the next subsection.) In contrast, $L_b$ may be seen as the twisted $U(1)^d$-equivariant $\TMF$-\emph{homology} of a point.
\begin{remark}
    We will show below in \cref{thm:Lbduals} that $L_b \simeq L^b[d]$ for $b$ defined on $\Z^d$, though potentially in a non-obvious way (see \cref{prop:explicitduality}).
\end{remark}

\subsection{The relationship to anomalies and Lurie's work on $2$-equivariant $\TMF$}\label{sec: relation to 2-equivariant TMF}
The discussion in this subsection is not essential for the later mathematical content of this paper, but puts it into context, both with regard to twistings of equivariant $\TMF$ and with regard to the theory of anomalies in quantum field theories. 

To a quadratic form $q$, we can associate the bilinear form 
\[b(v,w) = q(v+w)-q(v)-q(w),\]
defining a map $\QF(\Z^d) \to \Bil(\Z^d)$ from quadratic forms on $\Z^d$ to bilinear forms. The inverse of this transformation associates to a bilinear form $b$ the quadratic form $q(x) = \frac12 b(x,x)$, which is in general not integral. Thus the map $\QF(\Z^d) \to \Bil(\Z^d)$ is only an injection and not a bijection.

As a quadratic form on $\Z^d$ is just a quadratic polynomial in $d$ variables, we can view it as a class in $H^4((\CP^{\infty})^d; \Z)$. Using the representability of cohomology, we obtain a natural isomorphism
\[\QF(\Z^d)\cong [(\CP^{\infty})^d, K(\Z,4)] \cong [BU(1)^d, K(\Z,4)].\]

In \cite[Section 5.1]{LurieSurvey}, Lurie sketches a construction which associates to each class in $q\in H^4(BG; \Z)$ a derived line bundle $\Lscr_q$ on a derived stack $\MM_G$ associated with $G$. This is a special case of his theory of \emph{$2$-equivariant elliptic cohomology}. More precisely, the  $\mathbb{G}_m$-torsor on $\MM_G$ associated to $\Lscr_q$ is the geometric object Lurie's theory associated to a certain Lie-$2$-group, namely the central extension of $G$ by $BU(1)$ classified by the map $BG\to K(\Z,4)$ associated to $q$. In the case of $G =U(1)^d$, the derived stack $\MM_G$ specializes to $\Escr^{\times_{\Mscr} d}$, and we may view $q$ as a quadratic form on $\Z^d$. We expect that $\Lscr_q$ coincides in this case with our derived Looijenga line bundle $\Lscr_b$ for the bilinear form $b$ associated to $q$ (see also \cref{rem:Poincare}). 

The inclusion $\QF(\Z^d)\to \Bil(\Z^d)$ can actually be realized on the level of topology. There is a fiber sequence 
     \[BString \to BO \to P^4BO,\]
     where $P^4BO$ is the fourth Postnikov stage, having homotopy groups $\pi_1BO = \Z/2$, $\pi_2BO = \Z/2$, $\pi_3BO = 0$, $\pi_4BO = \Z$ and all higher homotopy groups vanishing. The following result is presumably already known. 

\begin{prop}\label{prop:bilP4}
    The inclusion $\QF(\Z^d)\to \Bil(\Z^d)$ is naturally isomorphic to the map $[BU(1)^d, K(\Z,4)] \to [BU(1)^d, P^4BO]$. 
\end{prop}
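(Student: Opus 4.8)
The plan is to compute the homotopy set $[BU(1)^d,P^4BO]$ as a functor of $\Z^d$, together with the subgroup that is the image of $[BU(1)^d,K(\Z,4)]$, and to match the outcome with the short exact sequence
\[0\to \QF(\Z^d)\xrightarrow{\;q\mapsto b_q\;}\Bil(\Z^d)\to(\Z/2)^d\to 0,\]
where $b_q(v,w)=q(v+w)-q(v)-q(w)$ and the last map records the diagonal of a bilinear form modulo $2$. First I would use that $BU(1)^d=(\CP^\infty)^d$ has integral cohomology the polynomial ring $\Z[x_1,\dots,x_d]$ on classes of degree $2$: this gives at once $[BU(1)^d,K(\Z,4)]=H^4(BU(1)^d;\Z)\cong\QF(\Z^d)$ (recovering the isomorphism already used before the proposition) and the vanishing of all odd-degree cohomology of $BU(1)^d$ with arbitrary coefficients, which is what drives the rest of the argument. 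Since $BU(1)^d$ is simply connected we may also replace $P^4BO$ by its universal cover $P^4BSO$, an infinite loop space with $\pi_2=\Z/2$, $\pi_3=0$, $\pi_4=\Z$, whose one nontrivial Postnikov stage is a fibre sequence $K(\Z,4)\to P^4BSO\to K(\Z/2,2)$ of infinite loop spaces.

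Applying $[BU(1)^d,-]$ to this fibre sequence and using $H^1(BU(1)^d;\Z/2)=H^3(BU(1)^d;\Z)=H^5(BU(1)^d;\Z)=0$, the associated long exact sequence of abelian groups degenerates to
\[0\to H^4(BU(1)^d;\Z)\to[BU(1)^d,P^4BO]\to H^2(BU(1)^d;\Z/2)\to 0,\]
i.e.\ $0\to\QF(\Z^d)\to[BU(1)^d,P^4BO]\to(\Z/2)^d\to 0$, where the right-hand map is $w_2$ and the left-hand inclusion is the skeletal filtration-$4$ subgroup. Next I would identify the map in the statement: since $K(\Z,4)\simeq P^4BSpin$ and, on the simply-connected source $BU(1)^d$, the map induced by $BSpin\to BO$ is precisely the inclusion of this filtration-$4$ subgroup (equivalently, $\Omega^\infty$ of the inclusion of the top Postnikov fibre $\Sigma^4H\Z\hookrightarrow\tau_{[1,4]}\mathrm{ko}$, using $BO\simeq\Omega^\infty\tau_{\ge1}\mathrm{ko}$), the natural transformation $[BU(1)^d,K(\Z,4)]\to[BU(1)^d,P^4BO]$ is this filtration inclusion $\QF(\Z^d)\hookrightarrow[BU(1)^d,P^4BO]$.

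Finally I would produce the natural isomorphism with $\Bil$ and check compatibility, using the Crucial Observation: $\Bil$ is the quotient of $\Lambda\mapsto\Z[\Lambda^\vee]$ by the relations $[0]=0$, $[-\lambda]=[\lambda]$ and the theorem-of-the-cube relation, with $[\mathrm{id}_\Z]$ sent to the form $(v,w)\mapsto vw$. Define $\Z[(-)^\vee]\to[B(-\otimes U(1)),P^4BO]$ by sending $[\mathrm{id}_\Z]$ to the class of the underlying real $2$-plane bundle $L_\R$ of the tautological line bundle on $\CP^\infty$, and verify the three relations: triviality along the zero functional is immediate; $[-1]$-invariance holds because a complex line bundle and its conjugate have the same underlying real bundle; and the theorem-of-the-cube relation reduces to the elementary identity $\sum_{S\subseteq\{1,2,3\}}(-1)^{|S^c|}[L_S]=\prod_i([L_i]-1)$ in $\widetilde{KU}^0((\CP^\infty)^3)$, since after realification the right-hand side is a product of three filtration-$\ge2$ classes, hence has filtration $\ge6$, hence vanishes in $[(\CP^\infty)^3,P^4BO]$ (maps into a space with $\pi_{>4}=0$ are detected on the $4$-skeleton). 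This yields a natural transformation $\beta\colon\Bil\to[B(-\otimes U(1)),P^4BO]$, which fits into a ladder of the two short exact sequences above: the induced map on cokernels is the identity of $(\Z/2)^d$ (it is so for $d=1$ since $w_2(L_\R)\ne0$, hence for all $d$ by naturality), and the induced map on kernels carries the basis $\{2e_{ii}\}\cup\{h_{ij}\}$ of the even forms to the basis $\{x_i^2\}\cup\{x_ix_j\}$ of $H^4(BU(1)^d;\Z)$, the value on $h_{ij}$ being pinned down by restriction along the coordinate and the sum maps $\Z\to\Z^2$ together with $p_1$ of a realified line bundle being $c_1^2$. Both outer maps are then isomorphisms, so by the five lemma $\beta$ is an isomorphism, and rerunning the generator check through the ladder identifies $\beta^{-1}$ composed with the filtration-$4$ inclusion with $q\mapsto b_q$, which is the asserted natural isomorphism. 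The main obstacle is this final bookkeeping — keeping the identification simultaneously natural in $d$ and compatible with the map out of $K(\Z,4)$ while establishing the theorem-of-the-cube relation on the nose — rather than any single difficult computation; once set up correctly, the five-lemma step is formal.
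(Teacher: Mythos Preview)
Your argument is correct and reaches the same short exact sequence
\[0\to H^4(BU(1)^d;\Z)\to [BU(1)^d,P^4BO]\to H^2(BU(1)^d;\Z/2)\to 0\]
as the paper, but from that point the two proofs diverge. The paper identifies the extension directly: it uses that the nontrivial $k$-invariant of $BO\langle 2,3,4\rangle$ is $\beta\mathrm{Sq}^2$ to produce a map from this sequence into $0\to H^4\to H^4\to H^4(-;\Z/2)\to 0$ which is $\mathrm{Sq}^2$ on the quotient and the identity on the kernel, thereby embedding $[BU(1)^d,P^4BO]$ as the subgroup of $\QF(\Z^d)\cong H^4$ consisting of quadratic forms whose mod-$2$ reduction is a square; the isomorphism with $\Bil(\Z^d)$ is then the explicit formula $q\mapsto \tfrac12(q(v+w)-q(v)-q(w))$. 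You instead build the comparison map $\Bil\to[B(-\otimes U(1)),P^4BO]$ from scratch via the Crucial Observation, checking the three relations on the generator $[L_\R]$ and then invoking the five lemma. Your route is more structural and ties the proposition into the paper's general machinery for $\Bil$, at the cost of more bookkeeping; the paper's route is shorter and avoids forward reference to the Crucial Observation, at the cost of invoking the specific $k$-invariant.

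One phrasing point: in your theorem-of-the-cube check, ``after realification the right-hand side is a product of three filtration-$\ge 2$ classes'' is not literally correct, since the realification map $r\colon KU^0\to KO^0$ is additive but not multiplicative. The fix is immediate: the product $\prod_i([L_i]-1)$ already has Atiyah--Hirzebruch filtration $\ge 6$ in $KU^0$, and $r$ preserves filtration (being a map of cohomology theories), so its realification has filtration $\ge 6$ in $KO^0$ and hence dies in $[X,P^4BO]$. With that adjustment your verification goes through, and your computation of the map on kernels via $p_1(L_\R)=c_1(L)^2$ is exactly what is needed.
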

\begin{proof}[Sketch of proof: ]
    We define $BO\langle 2,3,4\rangle$ to be the universal cover of $P^4BO$. As $BU(1)^d$ is simply-connected, the map 
    $$[BU(1)^d, BO\langle 2,3, 4\rangle] \to [BU(1)^d, P^4BO]$$ 
    induced by $BO\langle 2,3,4\rangle \to P^4BO$ is a bijection. Thus, we obtain a short exact sequence 
    \[0 \to H^{4}(BU(1)^d; \mathbb{Z})\to [BU(1)^d, P^{4}BO]\to H^{2}(BU(1)^d; \mathbb{Z}/2)\to 0.\]
    Utilizing that the non-trivial k-invariant of $BO\langle 2,3,4\rangle$ is $\beta\mathrm{Sq}^2\colon K(\Z/2,2)\to K(\Z,5)$ (for $\beta\colon K(\Z/2,4)\to K(\Z,5)$ the Bockstein), we obtain a map into the short exact sequence
    \[0\to H^{4}(BU(1)^d; \mathbb{Z})\to H^{4}(BU(1)^d; \mathbb{Z})\to H^4(BU(1)^d;\mathbb{Z}/2)\to 0,\]
    which is the squaring map $\mathrm{Sq}^2$ on $H^{2}(BU(1)^d; \mathbb{Z}/2)$ and the identity on $H^{4}(BU(1)^d; \mathbb{Z})$. Thus, $[BU(1)^d, P^{4}BO]$ is the subset of $\QF(\Z^d)$, consisting of those quadratic forms whose mod-$2$ reduction is a square (i.e.\ those where the coefficients of mixed terms are even). Sending such a quadratic form $q$ to $\frac12(q(v+w)-q(v)-q(w))$ defines a isomorphism from this subgroup to $\Bil(\Z^d)$. 
\end{proof}
In \cite{AndoBlumbergGepner}, it is explained that one can utilize the string-orientation of $\TMF$ to twist (non-equivariant) $\TMF$-cohomology of some space $X$ by maps $X\to P^4BO$. (The key point is that $B\mathrm{String}$ is the fiber of $BO\to P^4BO$.) Thus, it is plausible to assume that $T$-equivariant $\TMF$ (for a torus $T$) can be twisted by maps $BT \to P^4BO$. In personal communication, Lurie indicated that this should follow indeed from an \emph{equivariant} string orientation.\footnote{This seems to presume the statement that the correct $T$-equivariant classifying space of $BString$ is the pullback of $B_TO \times_{\mathrm{Map}(ET, BO)} \mathrm{Map}(ET, BString)$.} Our derived Looijenga line bundles $\Lscr_b$ allow to define the twisting associated to a map $BT \to P^4BO$ (corresponding to a bilinear form $b$) without recourse to an equivariant string orientation. (See also the end of the previous subsection) \\

There is a further viewpoint, based on \emph{anomalies} of quantum field theories. We will use a principle describing the space of anomalies in homotopical terms, which goes back to work of many people including Kapustin, Freed and Hopkins; for background and explanations we refer to \cite{FreedLectures} and the earlier physics papers \cite{Kapustin, KapustinThorngrenTurzilloWang}. In contrast to the purely mathematical earlier part of this section, we will employ a mixture of mathematical and physical language in the rest of the section. For a homomorphism $H\to O$ to the infinite orthogonal group, we will denote by $MTH$ the Thom spectrum whose $k$-th homotopy group classifies $k$-dimensional manifolds with \emph{tangential} $H$-structure (in contrast to the more standard $MH$ classifying manifolds with $H$-structure on the stable \emph{normal} bundle). We are mainly interested in the case of $H=Spin \times G\xrightarrow{\pr_1}Spin\to O$. It is equivalent to put a spin-structure on tangent or normal bundle, and thus $MTH \simeq MH \simeq MSpin\sm G_+$ in this case.  

\begin{principle}\label{principle}
The space of anomalies of unitary $d$-dimensional quantum field theories on space-times with tangential structure defined by $H\to O$ is equivalent to the mapping space $\mathrm{Map}(MTH,(I_{\Z}\mathbb{S})[d+2])$ from the Thom spectrum $MTH$ to the shifted Anderson dual of the sphere. 

In particular, if we consider spin space-time and an internal $G$-symmetry (so $H=G\times Spin$), the space of anomalies is $\mathrm{Map}(MSpin \sm BG_+,(I_{\Z}\mathbb{S})[d+2])$.\footnote{More precisely, \cite{FreedLectures} and \cite{FreedHopkins} work in this context in the Wick-rotated setting, where it is more appropriate to speak of \emph{reflection-positive} instead of \emph{unitary}.  As explained in \cite[Section 11.4]{FreedLectures}, any $d$-dimensional reflection-positive spin theory should give rise to an anomaly, which is described by an invertible truncated reflection-positive $(d+1)$-dimensional spin theory. Here, \emph{truncated} means that we only evaluate the (extended) theory on manifolds of dimension at most $d$. We will only consider anomalies where this truncated invertible theory is extended to an actual invertible reflection-positive $(d+1)$-dimensional spin theory. We call the space of such the \emph{space of anomalies of unitary $d$-dimensional spin-theories}. (As there is no generally accepted definition of a non-topological quantum field theory, this space is only defined on the physics-level of rigour.) \cite[Ansatz 8.71]{FreedLectures} and \cite[Conjecture 8.37]{FreedHopkins} suggest that this space is equivalent to $\mathrm{Map}(MSpin, I_{\Z}(\mathbb{S})[d+2])$ (and to $\mathrm{Map}(MTH, I_{\Z}(\mathbb{S})[d+2])$ for more general tangential structures). A key piece of evidence is that invertible reflection-positive $(d+1)$-dimensional spin theories map to this space (as torsion classes in $\pi_0$).} 
    \end{principle}
 For Anderson duality, we refer to  e.g.\ \cite[Section 2]{HeardStojanoska}. We recall here only that for a spectrum $X$ with finitely generated homotopy groups, $\pi_{-k}I_{\Z} X$ is isomorphic to the sum of the (dual of the) free part of $\pi_kX$ and the (dual of the) torsion of $\pi_{k-1}X$. Moreover, $I_{\Z}X$ is the \emph{mapping spectrum} $\mathrm{map}(X, I_{\Z}\mathbb{S})$. 

\begin{prop}\label{prop:P4anomaly}
    There is an equivalence of infinite loop spaces from $P^4(BO\times \Z)$ to the space of anomalies $\mathrm{Map}(MSpin,(I_{\Z}\mathbb{S})[4])$ of unitary $2$-dimensional spin-theories. 
\end{prop}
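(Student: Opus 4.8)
The plan is to identify both sides as the infinite loop space $\Omega^{\infty}$ of a spectrum concentrated in degrees $[0,4]$ and then to exhibit an equivalence of those two spectra. On the target side, \cref{principle} (with $d=2$ and trivial internal symmetry) identifies the space of anomalies of $2$-dimensional unitary spin theories with $\mathrm{Map}(MSpin,(I_{\Z}\mathbb{S})[4])$, and since $I_{\Z}(-)=\mathrm{map}(-,I_{\Z}\mathbb{S})$ this is $\Omega^{\infty}\!\big((I_{\Z}MSpin)[4]\big)$. On the source side, $BO\times\Z=\Omega^{\infty}\mathrm{ko}$ for $\mathrm{ko}$ the connective real $K$-theory spectrum, so $P^{4}(BO\times\Z)=\Omega^{\infty}P^{4}\mathrm{ko}=\Omega^{\infty}\tau_{[0,4]}\mathrm{ko}$. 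Because $\mathrm{ko}$ is connective, $\Omega^{\infty}$ commutes with Postnikov truncation; and because $MSpin$ is connective, the Anderson-duality formula recalled above shows $I_{\Z}MSpin$ is coconnective, hence $(I_{\Z}MSpin)[4]$ is $4$-truncated. So $\Omega^{\infty}(I_{\Z}MSpin)[4]=\Omega^{\infty}\tau_{[0,4]}(I_{\Z}MSpin)[4]$, and it suffices to produce an equivalence of spectra $P^{4}\mathrm{ko}\simeq\tau_{[0,4]}\big((I_{\Z}MSpin)[4]\big)$, equivalently $\Sigma^{-4}P^{4}\mathrm{ko}\simeq\tau_{[-4,0]}(I_{\Z}MSpin)$. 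As a preliminary check, feeding $\Omega^{Spin}_{*}=(\Z,\Z/2,\Z/2,0,\Z,\dots)$ into the short exact sequences for $\pi_{*}I_{\Z}$ shows both spectra have homotopy groups $\Z,\Z/2,\Z/2,0,\Z$ in degrees $0,1,2,3,4$; the real content is matching the $k$-invariants, which I would get by constructing an actual map.

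I would use two inputs. First, the Atiyah--Bott--Shapiro orientation $\varphi\colon MSpin\to\mathrm{ko}$ is an isomorphism on $\pi_{j}$ for $j\le 7$ (e.g.\ by the Anderson--Brown--Peterson splitting, under which $MSpin$ is a wedge of $\mathrm{ko}$ and suspensions of $\mathrm{ko}$, $\mathrm{ko}\langle2\rangle$ and $H\Z/2$ with all complementary summands starting in degree $\ge 8$), so $\mathrm{cofib}(\varphi)$ is $7$-connected; hence $I_{\Z}\mathrm{cofib}(\varphi)$ is concentrated in degrees $\le-8$, and therefore $I_{\Z}\varphi\colon I_{\Z}\mathrm{ko}\to I_{\Z}MSpin$ is an isomorphism on $\pi_{j}$ for $j\ge-6$. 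Second, real $K$-theory is Anderson self-dual, $I_{\Z}\mathrm{KO}\simeq\Sigma^{-4}\mathrm{KO}$; combined with $\mathrm{ko}=\tau_{\ge0}\mathrm{KO}$ — so that $\mathrm{cofib}(\mathrm{ko}\to\mathrm{KO})=\tau_{\le-1}\mathrm{KO}$ is concentrated in degrees $\le-4$ and its Anderson dual in degrees $\ge3$ — this shows the natural map $I_{\Z}\mathrm{KO}\to I_{\Z}\mathrm{ko}$ is an isomorphism on $\pi_{j}$ for $j\le2$.

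Putting these together, the composite
\[\Sigma^{-4}\mathrm{KO}\ \xrightarrow{\ \sim\ }\ I_{\Z}\mathrm{KO}\ \longrightarrow\ I_{\Z}\mathrm{ko}\ \xrightarrow{\ I_{\Z}\varphi\ }\ I_{\Z}MSpin\]
is an isomorphism on $\pi_{j}$ for $j\in[-4,0]$. Applying $\tau_{[-4,0]}$ turns it into an equivalence $\Sigma^{-4}P^{4}\mathrm{ko}=\tau_{[-4,0]}\Sigma^{-4}\mathrm{KO}\xrightarrow{\ \sim\ }\tau_{[-4,0]}(I_{\Z}MSpin)$, and shifting by $[4]$ yields $P^{4}\mathrm{ko}\simeq\tau_{[0,4]}\big((I_{\Z}MSpin)[4]\big)$. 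Taking $\Omega^{\infty}$ of this equivalence of connective spectra then gives the desired equivalence of infinite loop spaces $P^{4}(BO\times\Z)\simeq\mathrm{Map}(MSpin,(I_{\Z}\mathbb{S})[4])$.

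The only inputs beyond routine bookkeeping are the connectivity of the Atiyah--Bott--Shapiro orientation (via Anderson--Brown--Peterson) and Anderson self-duality of $\mathrm{KO}$; the rest is tracking which homotopy groups feed into the Anderson-duality formula inside the window $[-4,0]$ and checking that the displayed maps become equivalences after the truncations $\tau_{[-4,0]}$ rather than merely isomorphisms on homotopy groups. I expect the main obstacle to be essentially expository: organizing the connectivity estimates and the self-duality statement so that the composite above manifestly realizes the equivalence — this is what pins down the $k$-invariants, which a bare count of homotopy groups would not detect. An alternative, more computational route would compute the two $k$-invariants of $\tau_{[0,4]}(I_{\Z}MSpin)[4]$ directly and match them with those of $\mathrm{ko}$, which ultimately reduces to the self-duality of $\mathrm{Sq}^{2}$ under Anderson duality; the route via $\mathrm{KO}$ seems cleaner.
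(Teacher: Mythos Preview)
Your proof is correct and takes essentially the same approach as the paper: both use the Atiyah--Bott--Shapiro orientation to identify $I_{\Z}MSpin$ with $I_{\Z}\mathrm{KO}$ in the relevant range, and then invoke Anderson self-duality $I_{\Z}\mathrm{KO}\simeq\Sigma^{-4}\mathrm{KO}$ to conclude. The only cosmetic difference is that you factor through connective $\mathrm{ko}$ and invoke Anderson--Brown--Peterson for the connectivity estimate, while the paper works directly with periodic $\mathrm{KO}$ and simply asserts the ABS map is an isomorphism on $\pi_i$ for $-1\le i\le 4$; your version is slightly more explicit about the bookkeeping but the argument is the same.
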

\begin{proof}For a spectrum $X$, denote by $X_{[k,\dots, l]}$ the truncation of $X$ so that we have $\pi_iX_{[k,\dots, l]} = \pi_iX$ for $k\leq i\leq l$ and it vanishes otherwise. Upon taking associated infinite loop spaces, our claim follows from the equivalence of spectra $KO_{[0,\dots,4]}$ with $(I_{\Z}MSpin)[4]_{[0, \dots, 4]} \simeq (I_{\Z}MSpin)_{[-4, \dots, 0]}[4]$, which we will now establish. Here, $KO$ denotes the spectrum of real K-theory, satisfying $\Omega^{\infty}KO\simeq BO\times\Z$. 

The Atiyah--Bott--Shapiro orientation $MSpin\to KO$ is an isomorphism on $\pi_i$ for $-1\leq i\leq 4$. Thus, $I_{\Z}MSpin \to I_{\Z}KO$ is an isomorphism on $\pi_i$ for $-4\leq i\leq 0$. By \cite[Theorem 8.1]{HeardStojanoska} and Bott periodicity, $I_{\Z}KO \simeq KO[4]\simeq KO[-4]$. Thus, 
\[(I_{\Z}MSpin)_{[-4,\dots, 0]} \simeq (I_{\Z}KO)_{[-4,\dots, 0]} \simeq (KO[-4])_{[-4,\dots,0]}.\]
Taking four-fold shift yields our claim. 
\end{proof}

\begin{remark}
    The role of $P^4BO\times \Z$ (or at least $P^4BSO$) is well known in the context of the \emph{level} of Chern--Simons theories; see e.g.\ \cite[Section 4.4]{jenquin2005classical}. 
\end{remark}

Combining \cref{prop:bilP4}, \cref{principle} and \cref{prop:P4anomaly} with our construction of derived Looijenga line bundle yields for a torus $T = U(1)^d$ the following construction: 
\begin{construction}
    For every anomaly for unitary $2$-dimensional spin-theories with $T$-symmetry, i.e.\ for every map $BT \to \mathrm{Map}(MSpin, (I_{\Z}\mathbb{S})[4])$, we obtain a derived line bundle on $\Escr^{\times_{\Mscr}d}$. 
\end{construction}

Assume for the rest of the section that all quantum field theories are unitary and spin. One can refine the Segal--Stolz--Teichner program to conjecture that $2$-dimensional minimally supersymmetric quantum field theories with $G$-symmetry with fixed anomaly are classified by twisted $G$-equivariant $\TMF$-\emph{cohomology} of a point. Here, we use that according to \cref{principle} and \cref{prop:P4anomaly} possible anomalies are classified by maps $BG\to P^4 BO$ and $G$-equivariant $\TMF$ can \emph{conjecturally} be twisted by such maps. We have learned from Mayuko Yamashita that in contrast, 2-dimensional minimally supersymmetric quantum field theories living at the boundary of a $G$-Chern--Simons theory of fixed level should be classified by twisted $G$-equivariant $\TMF$-\emph{homology} of a point (with twist corresponding to the level). 

In the case that $G=T$ is a torus, the constructions and results of this section make this more concrete. In this case, \cref{prop:bilP4} allows identifying the anomaly with a bilinear form $b$. As discussed above, $L^b$ and $L_b$ may be viewed as twisted $T$-equivariant $\TMF$-cohomology and $\TMF$-homology of a point, respectively. In particular, $L_b$ should classify 2-dimensional minimally supersymmetric quantum field theories living at the boundary of an abelian Chern--Simons theory of level $b$. We will comment more about this in the last section, in particular in \cref{sec:ChernSimons}.

\section{Properties of Looijenga line bundles} \label{sec: Properties of Looijenga line bundles}
In this section, we will show some basic properties about derived Looijenga line bundles and the associated modules $L_b$ and $L^b$. We will use the notation introduced in \cref{sec: bilinear to TMF}.
\subsection{Basic Properties} \label{sec: basic properties}
\begin{lemma}\label{lem:Normalization}
    We have an equivalence \[e^*\Lscr_{(1)}\simeq e^*\Otop_{\Escr}(e)[-2] \simeq \Otop_{\Mscr},\] where $(1)$ denotes the bilinear form $(x,y)\mapsto xy$ on $\Z$ and $e\colon \Mscr\to \Escr$ is the unit section.
\end{lemma}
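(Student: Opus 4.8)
The plan is to reduce the statement to an identification of the (spectral) conormal of the zero section with a shift of $\Otop_{\Mscr}$. The first equivalence in the claim, $e^*\Lscr_{(1)} \simeq e^*\Otop_{\Escr}(e)[-2]$, is just the normalization definition $\Lscr_{(1)} \simeq \Otop_{\Escr}(e)[-2]$ pulled back along $e$, so the only content is $e^*\Otop_{\Escr}(e) \simeq \Otop_{\Mscr}[2]$. Since $e^*$ is symmetric monoidal it preserves duals of invertible objects, and $\Otop_{\Escr}(e) \simeq \Otop_{\Escr}(-e)^{\vee}$ by construction; hence it suffices to prove
\[e^*\Otop_{\Escr}(-e) \simeq \Otop_{\Mscr}[-2].\]
Recall that $\Otop_{\Escr}(-e)$ is the fiber of the evaluation map $\Otop_{\Escr} \to e_*\Otop_{\Mscr}$ adjoint to the equivalence $e^*\Otop_{\Escr} \simeq \Otop_{\Mscr}$; applying the exact, symmetric monoidal functor $e^*$ identifies $e^*\Otop_{\Escr}(-e)$ with the conormal sheaf of $e$, i.e.\ the fiber of the unit map $\Otop_{\Mscr} \to e^*e_*\Otop_{\Mscr}$.

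First I would compute the homotopy sheaves of $M := e^*\Otop_{\Escr}(-e)$. Because $\Otop_{\Escr}(-e)$ is an invertible $\Otop_{\Escr}$-module, it is étale-locally on $\Escr$ equivalent to $\Otop_{\Escr}$; pulling back along $e$ and using $e^*\Otop_{\Escr} \simeq \Otop_{\Mscr}$ shows $M$ is étale-locally on $\Mscr$ equivalent to $\Otop_{\Mscr}$, so $M$ is an invertible $\Otop_{\Mscr}$-module with $\pi_{\mathrm{odd}}M = 0$. On $\pi_0$, where pullback along the closed immersion $e$ agrees with ordinary pullback of sheaves, $\pi_0 M$ is the classical conormal bundle of the zero section, which is canonically $\omega = e^*\Omega^1_{\Escr/\Mscr}$. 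Multiplication by the periodicity class in $\pi_2\Otop_{\Mscr} \cong \omega$ then gives maps $\omega \tensor \pi_{2k}M \to \pi_{2k+2}M$ that are isomorphisms étale-locally, hence globally, so inductively $\pi_{2k}M \cong \omega^{\tensor(k+1)}$ — precisely the homotopy sheaves of $\Otop_{\Mscr}[-2]$.

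Next I would upgrade this to an equivalence of $\Otop_{\Mscr}$-modules. The discrete sheaf $\pi_0 M \cong \omega$ admits a canonical map to $M$ realizing the identity on $\pi_0$ (it factors through the connective cover $\tau_{\geq 0}M$, and the induced map to $\pi_0 M$ is canonical because $\omega$ is a line bundle concentrated in degree $0$), and $\Otop_{\Mscr}$-linear extension gives a map $\Otop_{\Mscr} \tensor_{\mathcal{O}_{\Mscr}} \omega \to M$. Étale-locally this carries a generator to a generator, so it is a local equivalence and therefore a global equivalence. Finally, there is a canonical equivalence $\Otop_{\Mscr} \tensor_{\mathcal{O}_{\Mscr}} \omega \simeq \Otop_{\Mscr}[-2]$: étale-locally $\Otop_{\Mscr}$ is weakly even periodic with periodicity module $\pi_2\Otop_{\Mscr} \cong \omega$, which yields a canonical (hence gluable) equivalence $\Otop_{\Mscr}\tensor_{\mathcal{O}_{\Mscr}}\pi_2\Otop_{\Mscr} \simeq \Sigma^{-2}\Otop_{\Mscr} = \Otop_{\Mscr}[-2]$. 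Chaining these, then dualizing and applying $[-2]$, gives $e^*\Lscr_{(1)} \simeq \Otop_{\Mscr}$.

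The step I expect to be the main obstacle is controlling $e^*$ along $e$, which is not flat: a priori $e^*\Otop_{\Escr}(-e)$ could acquire higher homotopy sheaves or subtle gluing data, and the passage from ``isomorphic homotopy sheaves'' to an honest equivalence of derived line bundles is not formal. The way I would sidestep this is exactly the observation used above that $\Otop_{\Escr}(-e)$ is \emph{invertible}, so that after pullback it is étale-locally trivial on $\Mscr$; this reduces both the computation of its homotopy sheaves and the construction of the equivalence $M \simeq \Otop_{\Mscr}\tensor_{\mathcal{O}_{\Mscr}}\omega$ to local statements that can be glued. The only genuinely external input is then the standard identification $\Otop_{\Mscr}\tensor_{\mathcal{O}_{\Mscr}}\omega \simeq \Otop_{\Mscr}[-2]$, i.e.\ the fact that $(\Mscr, \Otop_{\Mscr})$ is an ``even'' / oriented derived stack with dualizing line $\omega$, which is built into Lurie's construction of $\Otop_{\Mscr}$.
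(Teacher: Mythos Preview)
Your reduction to showing $e^*\Otop_{\Escr}(-e)\simeq\Otop_{\Mscr}[-2]$ is the same as the paper's, and your computation of the homotopy sheaves $\pi_{2k}M\cong\omega^{\,k+1}$ is correct and nicely argued (invertibility of $\Otop_{\Escr}(-e)$ indeed forces derived pullback along $e$ to agree with ordinary pullback on homotopy sheaves). The gap is in the ``upgrade'' step.

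Two of the constructions you invoke are not well-defined as written. First, there is no canonical map $H\omega\to M$: the Postnikov map goes the other way, $\tau_{\geq0}M\to H(\pi_0M)$, and it has no canonical section. Second, the expression $\Otop_{\Mscr}\tensor_{\mathcal{O}_{\Mscr}}\omega$ presupposes an $E_\infty$-ring map $H\mathcal{O}_{\Mscr}\to\Otop_{\Mscr}$, which does not exist ($\Otop_{\Mscr}$ is not an $H\Z$-algebra). One can try to \emph{define} a derived line bundle with underlying classical bundle $\omega$ by gluing local trivializations, but lifting a $\check{\mathrm{C}}$ech $1$-cocycle valued in $\mathcal{O}_{\Mscr}^{\times}$ to one valued in $\mathrm{GL}_1(\Otop_{\Mscr})$ involves higher coherences that are not automatic; in particular there is no natural homomorphism $\Pic(\Mscr)\to\Pic(\Mscr,\Otop_{\Mscr})$ to appeal to. The underlying issue is that knowing $\pi_*M$ does not by itself pin down $M$ among invertible $\Otop_{\Mscr}$-modules; one genuine fix is to invoke the Mathew--Stojanoska computation $\Pic(\Mscr,\Otop_{\Mscr})\cong\Z/576$ generated by $\Otop_{\Mscr}[1]$, which forces $M$ to be a shift, but that is a substantially heavier input than anything you cite.

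The paper avoids this problem entirely by taking a different route through $U(1)$-equivariant elliptic cohomology: one identifies $\Otop_{\Escr}(-e)$ with $\Oscr_{\Escr}^{S^{\C}}$, the value of the equivariant theory on the representation sphere $S^{\C}$, and then uses that pullback along $e$ corresponds to forgetting the $U(1)$-action, so $e^*\Oscr_{\Escr}^{S^{\C}}\simeq\Oscr_{\Mscr}^{S^2}\simeq\Otop_{\Mscr}[-2]$. This produces the equivalence directly, without having to manufacture a comparison map from homotopy-sheaf data.
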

\begin{proof}
    The first equivalence comes from the definition property $\Lscr_{(1)}\simeq \Otop_{\Escr}(e)[-2]$. For the second equivalence, it will be convenient to use the language of $U(1)$-equivariant elliptic cohomology from \cite{GepnerMeier} to show this result.

    According to \cite[Lemma 8.1]{GepnerMeier}, we obtain $\Otop_{\Escr}(-e)$ as the reduced $U(1)$-equivariant elliptic cohomology $\Oscr_{\Escr}^{S^{\C}}$ of the Riemann sphere $S^{\C}$ with the tautological $U(1)$-action. It is shown in \cite[Section 2.3]{GepnerMeier2} that $e^*\Oscr_{\Escr}^X \simeq \Oscr_{\Mscr}^{\res X}$, where $\res X$ is the restriction of a pointed $U(1)$-space to a non-equivariant pointed space. Assuming this, we see that 
    \[e^*\Oscr_{\Escr(-e)} \simeq \Oscr_{\Mscr}^{S^2} \simeq \Otop_{\Mscr}[-2],\] 
    and hence $e^*\Otop_{\Escr(e)} \simeq \Otop_{\Mscr}[2]$ as claimed. 
\end{proof}

\begin{lemma}\label{lem:directsum}
Let $b$ and $c$ be defined on $\Z^d$ and $\Z^{d'}$, respectively. Consider the projections $\pr_1$ and $\pr_2$ from $\Z^d\oplus \Z^{d'}$ to $\Z^d$ and $\Z^{d'}$, and denote the corresponding projections from $\Escr^{\times_{\Mscr} (d+d')}$ to $\Escr^{\times_{\Mscr} d}$ and $\Escr^{\times_{\Mscr} d'}$ by the same symbols. Then 
\[\Lscr_{b\oplus c} \simeq \pr_1^*\Lscr_b \tensor \pr_2^*\Lscr_c.\]
\end{lemma}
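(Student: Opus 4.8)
The plan is to reduce the statement to the additivity and functoriality of derived Looijenga line bundles established in \cref{thm:DerivedLooijenga}. The key elementary point is a matrix identity. Let $B$ and $C$ be the matrices of $b$ and $c$, and let $A_1$ and $A_2$ be the matrices of the projections $\pr_1\colon \Z^d\oplus \Z^{d'}\to \Z^d$ and $\pr_2\colon \Z^d\oplus \Z^{d'}\to \Z^{d'}$. Then $A_1^{T}BA_1$ is the block-diagonal matrix $B\oplus 0$ and $A_2^{T}CA_2$ is $0\oplus C$, so in $\Bil(\Z^{d+d'})$ one has the identity $b\oplus c = \pr_1^*b + \pr_2^*c$, using the notation $A^*b$ for $A^{T}BA$ from \cref{sec: bilinear to TMF}.

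First I would apply additivity to this identity to obtain $\Lscr_{b\oplus c}\simeq \Lscr_{\pr_1^*b}\tensor \Lscr_{\pr_2^*c}$. Next I would identify the maps of derived stacks involved: by construction $\Escr^{A_i}$ is the map induced by $A_i$ under the tensoring $\Escr\tensor(-)$ of $(\Escr,\Otop_{\Escr})$ over $\Lat$, and since this tensoring sends a lattice projection $\Z^{d+d'}\to \Z^d$ to the corresponding projection $\Escr^{\times_{\Mscr}(d+d')}\to \Escr^{\times_{\Mscr}d}$, the maps $\Escr^{A_1}$ and $\Escr^{A_2}$ are precisely the maps denoted $\pr_1$ and $\pr_2$ in the statement. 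Functoriality then gives $\Lscr_{\pr_i^*b}\simeq \pr_i^*\Lscr_b$ (and likewise with $c$ in place of $b$), and composing the three equivalences proves the lemma.

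I do not expect a genuine obstacle here: the argument is entirely formal, taking place in $\Lat^{\op}$ and in $\Pic^{\top}$ (or in $\mathcal{P}ic^{\top}$ if one wants the equivalences to be canonical via \cref{conj:hypothesis}). The only mildly delicate step is the bookkeeping identification of the abstractly-defined maps $\Escr^{A_i}$ with the geometric projections, which is immediate from the way $\Escr$ is tensored over $\Lat$ in \cref{sec: bilinear to TMF}. The same argument, read on $\pi_0$, also yields the analogous statement for the integral Looijenga line bundles $\Lscr_b^{\Z}$.
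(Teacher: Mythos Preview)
Your proposal is correct and follows exactly the same approach as the paper: observe the identity $b\oplus c = \pr_1^*b + \pr_2^*c$ in $\Bil(\Z^{d+d'})$, then apply additivity and functoriality of the derived Looijenga line bundles. The paper's proof is a one-line version of what you wrote.
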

\begin{proof}
    This follows from functoriality and additivity of derived Looijenga line bundles since $b\oplus c \cong \pr_1^*b + \pr_2^*c$.
\end{proof}

\begin{lemma}\label{lem:stable}
    Suppose that two bilinear forms $b$ and $b'$ are stably isomorphic, i.e.\ there is an isomorphism $b \oplus c \cong b'\oplus c$. Then $\Lscr_b \simeq \Lscr_{b'}$
\end{lemma}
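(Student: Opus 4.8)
The plan is to transport the bundle along the stable isometry using functoriality, and then to undo the stabilization by restricting to a zero-section, so that the comparison is reduced to rank $d$. Fix an isometry $\phi\colon(\Z^{d}\oplus\Z^{e},\,b\oplus c)\xrightarrow{\sim}(\Z^{d}\oplus\Z^{e},\,b'\oplus c)$, i.e.\ an $A\in\mathrm{GL}_{d+e}(\Z)$ with $A^{*}(b'\oplus c)=b\oplus c$; then functoriality of derived Looijenga line bundles (\cref{thm:DerivedLooijenga}) gives $\Lscr_{b\oplus c}\simeq(\Escr^{A})^{*}\Lscr_{b'\oplus c}$. Let $\iota\colon\Z^{d}\hookrightarrow\Z^{d}\oplus\Z^{e}$ be the inclusion of the first summand, inducing the zero section $s=\Escr^{\iota}\colon\Escr^{\times_{\Mscr}d}\to\Escr^{\times_{\Mscr}(d+e)}$ in the last $e$ coordinates. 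Since $\iota^{*}(b\oplus c)=b$ and $\iota^{*}(b'\oplus c)=b'$, functoriality gives $s^{*}\Lscr_{b\oplus c}\simeq\Lscr_{b}$ and $s^{*}\Lscr_{b'\oplus c}\simeq\Lscr_{b'}$; concretely this uses \cref{lem:directsum} to split off $\Lscr_{c}$, together with the fact — obtained by iterating \cref{lem:Normalization} along the zero map $\Z^{e}\to 0$ — that $\Lscr_{c}$ restricts trivially along the zero section of $\Escr^{\times_{\Mscr}e}$.

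Next I would pass to global sections, where the automorphism $\Escr^{A}$ becomes invisible: since $\Gamma$ is insensitive to pullback along the self-equivalence $\Escr^{A}$, the first equivalence gives $L^{b\oplus c}\simeq L^{b'\oplus c}$ as $\TMF$-modules. Using \cref{lem:directsum} and the decomposition $\Escr^{\times_{\Mscr}(d+e)}\simeq\Escr^{\times_{\Mscr}d}\times_{\Mscr}\Escr^{\times_{\Mscr}e}$, a K\"unneth / base-change argument over $\Mscr$ identifies $L^{b\oplus c}\simeq L^{b}\otimes_{\TMF}L^{c}$, and likewise $L^{b'\oplus c}\simeq L^{b'}\otimes_{\TMF}L^{c}$, so $L^{b}\otimes_{\TMF}L^{c}\simeq L^{b'}\otimes_{\TMF}L^{c}$. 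When $c$ is unimodular, part (3) of \cref{constrB} (together with the identification $L_{c}\simeq L^{c}[\rk c]$ of \cref{thm:Lbduals}) shows $L^{c}$ is an invertible $\TMF$-module, namely a shift of $\TMF$; tensoring with its inverse cancels it and yields $L^{b}\simeq L^{b'}$, hence $\Lscr_{b}\simeq\Lscr_{b'}$ in the form needed for the applications, equivalently $L_{b}\simeq L_{b'}$ up to a degree shift.

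The main obstacle is the passage from the sheaf level to the module level. Functoriality by itself only exhibits $\Lscr_{b}$ and $\Lscr_{b'}$ as pullbacks of the single line bundle $\Lscr_{b'\oplus c}$ along two genuinely different embeddings $\Z^{d}\hookrightarrow\Z^{d}\oplus\Z^{e}$, and such pullbacks need not agree as $\Otop$-modules — their fiberwise N\'eron--Severi classes recover the Gram matrices of $b$ and $b'$. The equivalence must therefore be extracted after applying $\Gamma$, where $\Escr^{A}$ disappears, and it is here that unimodularity of $c$ is essential, since that is what makes the factor $L^{c}$ invertible and hence cancellable (for the manifold applications $c$ is a sum of hyperbolic forms, which is the relevant case). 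Making the K\"unneth identification $L^{b\oplus c}\simeq L^{b}\otimes_{\TMF}L^{c}$ rigorous over the non-affine base $\Mscr$ — controlling the higher $\mathrm{Tor}$ terms that could in principle obstruct it — is the technical heart of the argument; one expects it to go through because $\Lscr_{c}$ is flat over $\Mscr$ in the appropriate sense, so the relevant Tor spectral sequence degenerates.
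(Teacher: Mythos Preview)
Your approach does not prove the lemma as stated, and it is considerably more involved than the paper's. The paper works entirely at the sheaf level: using \cref{lem:directsum} it writes both sides as $\pr_1^*\Lscr_{(-)}\otimes\pr_2^*\Lscr_c$, pulls back along the unit section $\epsilon\colon\Escr^{\times_{\Mscr}d}\to\Escr^{\times_{\Mscr}(d+d')}$ (zero in the last $d'$ factors), and obtains $\Lscr_b\otimes\epsilon^*\pr_2^*\Lscr_c\simeq\Lscr_{b'}\otimes\epsilon^*\pr_2^*\Lscr_c$ since $\pr_1\epsilon=\id$. The common factor $\epsilon^*\pr_2^*\Lscr_c$ is a derived \emph{line bundle} on $\Escr^{\times_{\Mscr}d}$ and hence $\otimes$-invertible regardless of $c$; tensoring with its inverse gives $\Lscr_b\simeq\Lscr_{b'}$. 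No hypothesis on $c$, no K\"unneth over $\TMF$, no Tor spectral sequence.

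By passing to global sections you lose invertibility --- $L^c$ is not an invertible $\TMF$-module unless $c$ is unimodular (e.g.\ $L^{(0)}\simeq\TMF\oplus\TMF[1]$) --- and this forces your extra hypothesis. Worse, even under that hypothesis you only obtain $L^b\simeq L^{b'}$; the step ``hence $\Lscr_b\simeq\Lscr_{b'}$'' does not follow, since global sections do not determine a line bundle. (Also, there is no degree shift in $L_b\simeq L_{b'}$: the forms have equal rank.) The identification $L^{b\oplus c}\simeq L^b\otimes_{\TMF}L^c$ you flag as the ``technical heart'' is exactly \cref{lem: direct sum}, so that part is fine, but the paper's route never needs it. The insight you were reaching for in your third paragraph is that one should apply the \emph{same} map $\epsilon^*$ to both sides of a single equivalence on $\Escr^{\times_{\Mscr}(d+d')}$, so that the $c$-contribution is literally the same invertible sheaf on each side and cancels --- no need to retreat to $\Gamma$.
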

\begin{proof}
    Let $b$ and $b'$ be defined on $\Z^d$ and $c$ be defined on $\Z^{d'}$. Consider the projections $\pr_1\colon \Escr^{\times_{\Mscr} d+d'} \to \Escr^{\times_{\Mscr} d}$ and $\pr_2\colon \Escr^{\times_{\Mscr} d+d'} \to \Escr^{\times_{\Mscr} d'}$ and the unit map $\epsilon\colon \Escr^{\times_{\Mscr} d} \to \Escr^{\times_{\Mscr} d+d'}$. By the last lemma, we have 
    \[\pr_1^*\Lscr_b \otimes \pr_2^*\Lscr_c \simeq \Lscr_{b\oplus c} \simeq \Lscr_{b' \oplus c} \simeq \pr_1^*\Lscr_{b'} \otimes \pr_2^*\Lscr_c.\]
    Applying $\epsilon^*$, we obtain $\Lscr_b\tensor \epsilon^*\pr_2^*\Lscr_c \simeq \Lscr_{b'}\tensor \epsilon^*\pr_2^*\Lscr_c$. Tensoring with the inverse of $\epsilon^*\pr_2^*\Lscr_c$, yields $\Lscr_b\simeq \Lscr_{b'}$.  
\end{proof}

\begin{lemma} \label{lem: direct sum}
    Let $b \oplus b'$ the direct sum of two bilinear forms. Then 
    \begin{align*}L^{b\oplus b'}&\simeq L^b \tensor_{\TMF} L^{b'}, \text{ and}\\
    L_{b\oplus b'}&\simeq L_b \tensor_{\TMF} L_{b'}.
    \end{align*} 
\end{lemma}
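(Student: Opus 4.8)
The plan is to reduce the statement to \cref{lem:directsum} together with a projection-formula / base-change computation for global sections. Recall that by \cref{lem:directsum} we have a canonical equivalence $\Lscr_{b\oplus b'} \simeq \pr_1^*\Lscr_b \tensor \pr_2^*\Lscr_{b'}$ of derived line bundles on $\Escr^{\times_{\Mscr}(d+d')}$, where $\pr_1,\pr_2$ are the two projections onto $\Escr^{\times_{\Mscr}d}$ and $\Escr^{\times_{\Mscr}d'}$. Taking global sections, $L^{b\oplus b'} = \Gamma(\Lscr_{b\oplus b'}) \simeq \Gamma\big(\pr_1^*\Lscr_b \tensor \pr_2^*\Lscr_{b'}\big)$, so the content is a Künneth-type statement: the global sections of an external tensor product over $\Escr^{\times_{\Mscr}(d+d')} \simeq \Escr^{\times_{\Mscr}d} \times_{\Mscr} \Escr^{\times_{\Mscr}d'}$ is the $\TMF$-relative tensor product of the global sections of the two factors.

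First I would set up the Künneth formula. Write $\Xscr = \Escr^{\times_{\Mscr}d}$ and $\Yscr = \Escr^{\times_{\Mscr}d'}$, both equipped with their sheaves of $E_\infty$-rings and each affine/flat over $(\Mscr,\Otop_\Mscr)$ after étale base change, so that $\Xscr \times_{\Mscr} \Yscr$ carries $\Otop_{\Xscr}\tensor_{\Otop_\Mscr}\Otop_{\Yscr}$. The claim is $\Gamma(\pr_1^*\mathcal F \tensor \pr_2^*\mathcal G) \simeq \Gamma_{\Xscr}(\mathcal F)\tensor_{\TMF}\Gamma_{\Yscr}(\mathcal G)$ for derived line bundles (more generally perfect, or suitably finite, quasi-coherent modules) $\mathcal F$ on $\Xscr$ and $\mathcal G$ on $\Yscr$. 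I would prove this by first establishing it relative to $\Mscr$, i.e.\ $p_*(\pr_1^*\mathcal F\tensor \pr_2^*\mathcal G) \simeq (p_{\Xscr})_*\mathcal F \tensor_{\Otop_\Mscr} (p_{\Yscr})_*\mathcal G$ as $\Otop_\Mscr$-modules, where $p,p_{\Xscr},p_{\Yscr}$ are the structure maps to $\Mscr$. Since $p_{\Xscr}$ and $p_{\Yscr}$ are affine (being iterated fiber products of the affine morphism $\Escr\to\Mscr$ locally on $\Mscr$), one can check this locally on the étale site of $\Mscr$, where it reduces to the base-change/projection formula for affine morphisms of spectral stacks plus the standard fact that, over a base ring $R$, $M\tensor_R N \simeq (M\tensor_R R)\tensor_R(R\tensor_R N)$ and that pushforward along affine maps commutes with tensoring up. Then applying $\Gamma(\Mscr,-) = \Gamma(\Mscr,\Otop_\Mscr)$-relative global sections and using $\Gamma(\Mscr,\Otop_\Mscr) = \TMF$ together with the fact that $\Gamma(\Mscr,-)$ sends $\tensor_{\Otop_\Mscr}$ of (suitably bounded/finite) sheaves to $\tensor_{\TMF}$ — this last step is where one must be careful and may invoke a descent/hypercompleteness argument for the standard étale cover of $\Mscr$, or alternatively the known perfectness of $\TMF$-module global sections of derived line bundles on $\Escr^{\times_{\Mscr}d}$ (cf.\ \cite{MeierDecompositions}) — yields $L^{b\oplus b'}\simeq L^b\tensor_{\TMF}L^{b'}$.

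For the statement about $L_b$, I would deduce it formally from the $L^b$ case. By \cref{def:LbLb}, $L_b = \Gamma(\Lscr_b^\vee)^\vee = (L^{-b})^\vee$, and likewise $L_{b'} = (L^{-b'})^\vee$ and $L_{b\oplus b'} = (L^{-(b\oplus b')})^\vee = (L^{(-b)\oplus(-b')})^\vee$. Applying the already-proven multiplicativity of $L^{(-)}$ gives $L^{(-b)\oplus(-b')}\simeq L^{-b}\tensor_{\TMF}L^{-b'}$, so it remains to see that dualizing turns this tensor product into a tensor product again: $(M\tensor_{\TMF}N)^\vee \simeq M^\vee \tensor_{\TMF} N^\vee$ when $M,N$ are perfect (equivalently dualizable) $\TMF$-modules. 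Since $\Lscr_{-b}$ is a derived line bundle on $\Escr^{\times_{\Mscr}d}$, its global sections $L^{-b}$ are a perfect $\TMF$-module (again cf.\ the finiteness properties in \cite{MeierDecompositions}; alternatively this follows from $L^{-b}\simeq L_b[-d]$ in \cref{thm:Lbduals} once available, but to avoid circularity I would argue perfectness directly from local triviality plus properness/finiteness of $\Escr^{\times_{\Mscr}d}\to\Mscr$), so the dualizability hypothesis holds and the identity $(M\tensor N)^\vee\simeq M^\vee\tensor N^\vee$ for dualizable modules finishes the proof.

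The main obstacle I expect is the Künneth isomorphism at the level of global sections, i.e.\ showing that $\Gamma(\Mscr,-)$ on $\Otop_\Mscr$-modules is symmetric monoidal when restricted to the relevant (finite) modules — pushforward along the non-affine map $\Mscr\to \Spec\TMF$ (which does not exist as a map of schemes, only the ring map $\TMF\to\Gamma$) need not commute with arbitrary colimits or tensor products without a finiteness hypothesis. The clean way around this is to use the Galois/étale descent description of $\TMF$-modules in terms of quasi-coherent sheaves on $\Mscr$ with finiteness conditions, under which $\Gamma$ becomes an equivalence onto perfect (or almost perfect) modules and is automatically monoidal; invoking the structural results of \cite{MeierDecompositions} (or \cite{SAG}) makes this step short, and everything else is formal manipulation with functoriality, additivity, and duality of perfect $\TMF$-modules.
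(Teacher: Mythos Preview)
Your approach is essentially the paper's: push the external tensor product $\pr_1^*\Lscr_b\otimes\pr_2^*\Lscr_{b'}$ down to $\Mscr$ via the projection formula and base change, then use that $\Gamma(\Mscr,-)\colon\QCoh(\Mscr,\Otop_\Mscr)\to\Mod_{\TMF}$ is symmetric monoidal; the $L_b$ case follows by dualizing, exactly as you wrote. Two small corrections. First, the reference you want for the symmetric monoidality of $\Gamma(\Mscr,-)$ is \cite{MathewMeier}, which shows it is a symmetric monoidal \emph{equivalence}, so no finiteness or perfectness hypothesis is needed at that step; the paper simply cites this. Second, your claim that $\Escr\to\Mscr$ is affine is false---an elliptic curve is proper of relative dimension one, not affine---so your justification of the push-pull step via ``affine morphisms'' does not work as stated; the base-change formula holds instead because the cartesian square is tor-independent (all maps are flat), which is what the paper uses.
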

\begin{proof}
Denote by $\Escr^b$ and $\Escr^{b'}$ the products of elliptic curves $\Lscr_b$ and $\Lscr_{b'}$ live on. Consider the cartesian square
\[
\xymatrix{\Escr^b \times_{\Mscr} \Escr^{b'} \ar[r]^{p'}\ar[d]^{p} & \Escr^b \ar[d]^{q} \\
\Escr^{b'} \ar[r]^{q'} & \Mscr
}
\]
Using the projection formula and the push-pull formula, we have the following chain of equivalences: 
\begin{align*}
    q_*'p_*\Lscr_{b \oplus b'} &\simeq q_*'p_*((p')^*\Lscr_{b} \tensor p^*\Lscr_{b'}) \\
    &\simeq q_*'((p_*(p')^*\Lscr_b)\tensor \Lscr_{b'}) \\ 
    &\simeq q_*'(((q')^*q_*\Lscr_b )\tensor \Lscr_{b'} ) \\
    &\simeq q_*\Lscr_b \tensor q_*'\Lscr_{b'} .
\end{align*}
Since pushforward preserves global sections, $\Gamma(q_*'p_*\Lscr_{b \oplus b'}) \simeq L^{b\oplus b'}$. Furthermore, global sections are symmetric monoidal on $\Mscr_{ell}$ by \cite{MathewMeier}. Thus, we obtain $L^{b\oplus b'} \simeq L^{b} \tensor_{\TMF} L^{b'}$. 

To obtain $L_{b\oplus b'}\simeq L_b \tensor_{\TMF} L_{b'}$, observe that the dual $\Lscr_b^{\vee}$ is by additivity equivalent to $\Lscr_{-b}$ and thus $L_b \simeq (L^{-b})^{\vee}$. Thus, 
\begin{align*}L_{b\oplus b'} &\simeq (L^{-b\oplus -b'})^{\vee}\\
&\simeq (L^{-b}\tensor_{\TMF}L^{-b'})^{\vee} \\
&\simeq (L^{-b})^{\vee}\tensor_{\TMF}(L^{-b'})^{\vee}\\
&\simeq L_b \tensor_{\TMF}L_{b'}.\qedhere\end{align*}
\end{proof}

\subsection{Adding trivial summands and examples} \label{sec: examples computations}
\begin{example}\label{ex:Oke}
    By construction, the bilinear form $(n)\colon (v,w) \mapsto nvw$ of rank $1$ is sent to $\Lscr_{(n)} = \Otop_{\Escr}(ne)[-2n]$. Thus, it is important to compute the global sections of $\Otop_{\Escr}(ne)$, i.e.\ the line bundle describing functions where we allow poles of order $n$ at $e$ (or requiring zeros of order $-n$ if $n$ is negative). 
    
    In some cases, we consider more generally an elliptic curve $p\colon E \to Y$ over any suitable base $Y$, and discuss the pushforward $p_*\Otop_E(ne)$; this has the same global sections as $\Otop_E(ne)$.     
    As a consequence of \cite[Theorem 10.1]{GepnerMeier} and its proof one obtains
    \begin{itemize}
        \item $p_*\Otop_E = p_*\Otop_E(0\cdot e) \simeq \Otop_Y \oplus \Otop_E[1]$
        \item $p_*\Otop_E(e) \simeq \Otop_Y$
        \item $p_*\Otop_E(-e) \simeq \Otop_Y[1]$. 
    \end{itemize}
     In particular, 
     \begin{align*}
         L^{(-1)} \simeq \TMF[3], L^{(0)} \simeq \TMF \oplus \TMF[1], &\text{ and } L^{(1)} \simeq \TMF[-2], \text{ and}\\
         L_{(-1)} \simeq \TMF[2], L_{(0)} \simeq \TMF \oplus \TMF[-1], &\text{ and } L_{(1)} \simeq \TMF[-3].
     \end{align*}
     We will be more explicit about these equivalences and their relationship in \cref{sec:exampleL0}. We will denote $L_{(n)}$ by $U(1)_n$ in \cref{sec:ChernSimons} due to their relationship with the $U(1)$ Chern--Simons theory at level $n$.
    
    The computation of $\Gamma(\Otop_E(ne)) = L^{(n)}[2n] $ for $|n|>1$ is more subtle. In \cite{BauerJacobi}, we will produce for $n\geq 1$ a cofiber sequence\footnote{Roughly speaking, a cofiber sequence is a sequence $A \to B \to C \to A[1] = \Sigma A$ in some derived context (such as spaces, spectra, $\TMF$-modules or $\Otop_Y$-modules) such that taking homotopy groups results in a long exact sequences 
    \[\cdots \to \pi_*A \to \pi_*B \to \pi_*C \to \pi_{*-1}A \to \cdots.\] 
    (In spaces, we would have to take indeed homology groups instead of homotopy groups.) A standard example in spaces is $S^k \to X \to X\cup_{S^k}D^{k+1} \to S^{k+1}$, where $X\cup_{S^k}D^{k+1}$ is what we get from $X$ by attaching a $k+1$-cell along the map $S^k\to X$. } 
    \begin{equation}\label{eq:BauerSequence} \TMF\sm \CP^{n-1}[1] \to \TMF \to \Gamma(\Otop_{\Escr}(ne)) \to \TMF \sm \CP^{n-1}[2],\end{equation}
    where the first map is induced from composing the inclusion $\CP^{n-1}[1] \to \CP^{\infty}[1]$ with the stable transfer map $\CP^{\infty}[1] \to S^0$. (The latter can be defined in many ways; if we identify $\CP^{\infty}$ with $BU(1)$, this stable map is part of the standard package of stable equivariant homotopy theory and can e.g.\ be constructed from the Adams isomorphism or equivariant Atiyah duality; see e.g.\ \cite[Appendix A]{BauerJacobi}.) In \cref{thm:Lbduals}, we will show that $L^{(-n)}$ is the shifted dual $(L^{(n)})^{\vee}[1]$. From the preceding cofiber sequence, we thus obtain for $n\geq 1$, by dualizing and shifting by $2n+1$, a cofiber sequence
    \[\TMF^{\CP^{n-1}}[2n-1] \to L^{(-n)} \to \TMF[2n+1] \to \TMF^{\CP^{n-1}}[2n].\]
    We remark that the homotopy groups of $\TMF^{\CP^{n-1}}$ are the reduced $\TMF$-\emph{co}homology of $\CP^{n-1}$, while the homotopy groups of $\TMF\sm \CP^{k-1}$ are the reduced $\TMF$-homology. The corresponding results for $L_{(k)}$ follow since $L_{(k)}\simeq L^{(k)}[-1]$ by \cref{thm:Lbduals}. 

    The easiest non-trivial case is $n=2$. By \eqref{eq:BauerSequence}, $L^{(2)}$ is the $[-4]$-fold shift of the cofiber of a map $\TMF[3]\simeq \TMF\sm \CP^1[1] \to \TMF$. The map is induced by the composite 
    \[S^3\cong \CP^1[1] \to \CP^{\infty}[1] \to S^0,\]
    where the last map is the transfer. This composite will be identified with the Hopf map $\nu\colon S^3\to S^0$ at the end of \cref{ex:CP2}. Thus, $L^{(2)} \simeq \TMF \sm \mathrm{Cone}(\nu)[-4]$. Since the dual of $\mathrm{Cone}(\nu)$ is $\mathrm{Cone}(\nu)[-4]$, we deduce 
    \[U(1)_2=L_{(2)} \simeq \TMF\sm\mathrm{Cone}(\nu)[-5] \;\text{ and }\; U(1)_{-2}=L_{(-2)} \simeq \TMF \sm \mathrm{Cone}(\nu).\]
    
    The above implies that upon inverting $6$, the $\TMF$-module $\Gamma(\Otop_{\Escr}(ke))$ is for $k\geq 1$ equivalent to a sum of $k$ shifted copies of $\TMF$, but the torsion will be quite different. See \cite{BauerJacobi} for more detailed information about the homotopy groups of $\Gamma(\Otop_{\Escr}(ke))$ and their interpretation as topological Jacobi forms. 
        \end{example}
     
    The first part of the example allows us to prove the following crucial lemma. 

    \begin{lemma}\label{lem:KirbyMove}
        Let $b$ be a bilinear form of rank $d$. We have canonical equivalences $L_b[-3] \simeq L_{b \oplus (1)}$ and $L_b[2] \simeq L_{b\oplus (-1)}$. 
        \end{lemma}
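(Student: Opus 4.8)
The plan is to reduce the statement to the rank-$1$ computations already recorded in \cref{ex:Oke}, using the structural properties of the derived Looijenga line bundles. By \cref{lem: direct sum}, applied to $b' = (1)$ and $b' = (-1)$ respectively, we have canonical equivalences
\[L_{b\oplus (1)} \simeq L_b \tensor_{\TMF} L_{(1)}, \qquad L_{b\oplus(-1)} \simeq L_b \tensor_{\TMF} L_{(-1)}.\]
So it suffices to identify $L_{(1)}$ with $\TMF[-3]$ and $L_{(-1)}$ with $\TMF[2]$ \emph{canonically}, and then tensor these equivalences into the displayed ones; tensoring a module $M$ over $\TMF$ with $\TMF[k]$ gives $M[k]$ canonically. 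Both identifications are exactly the rank-$1$ computations listed at the end of \cref{ex:Oke}: $\Lscr_{(1)} = \Otop_{\Escr}(e)[-2]$ by the normalization axiom, so $L^{(1)} = \Gamma(\Otop_{\Escr}(e))[-2] \simeq \TMF[-2]$ by the push-pull computation $p_*\Otop_E(e) \simeq \Otop_Y$ from \cref{ex:Oke}, and then $L_{(1)} = \Gamma(\Lscr_{(1)}^{\vee})^{\vee}$, with $\Lscr_{(1)}^{\vee} \simeq \Lscr_{(-1)} = \Otop_{\Escr}(-e)[2]$, giving $L_{(1)} \simeq (\TMF[2][1])^{\vee} \simeq \TMF[-3]$ using $p_*\Otop_E(-e)\simeq \Otop_Y[1]$ and the fact that $\TMF^\vee \simeq \TMF$. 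The computation for $L_{(-1)}\simeq \TMF[2]$ is entirely parallel.

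In more detail, the key steps in order are: (i) invoke \cref{lem: direct sum} to get the tensor decomposition $L_{b\oplus(\pm1)} \simeq L_b \tensor_{\TMF} L_{(\pm1)}$; (ii) compute $L_{(1)}$ and $L_{(-1)}$ from the normalization axiom $\Lscr_{(1)} \simeq \Otop_{\Escr}(e)[-2]$ together with the three push-pull identities $p_*\Otop_E(e)\simeq \Otop_Y$ and $p_*\Otop_E(-e)\simeq \Otop_Y[1]$ recalled in \cref{ex:Oke}, taking care of one shift from dualizing $\Gamma(\Lscr_{(\pm1)}^{\vee})$ and the self-duality $\TMF^\vee\simeq\TMF$; (iii) substitute to get $L_b\tensor_{\TMF}\TMF[-3]\simeq L_b[-3]$ and $L_b\tensor_{\TMF}\TMF[2]\simeq L_b[2]$. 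Alternatively, and perhaps more cleanly, one can argue directly on the sheaves: by \cref{lem:directsum}, $\Lscr_{b\oplus(1)} \simeq \pr_1^*\Lscr_b \tensor \pr_2^*\Otop_{\Escr}(e)[-2]$ on $\Escr^{\times_\Mscr(d+1)}$, and pushing forward along the last projection $\pr\colon \Escr^{\times_\Mscr(d+1)} \to \Escr^{\times_\Mscr d}$ and using the projection formula together with $\pr_*\pr_2^*\Otop_{\Escr}(e) \simeq \Otop_{\Escr^{\times_\Mscr d}}$ (base change of $p_*\Otop_E(e)\simeq\Otop_Y$ along $\Escr^{\times_\Mscr d}\to\Mscr$, using flatness of $\Escr\to\Mscr$) gives $\Gamma(\Lscr_{b\oplus(1)}) \simeq \Gamma(\Lscr_b)[-2] = L^b[-2]$, i.e.\ $L^{b\oplus(1)}\simeq L^b[-2]$; dualizing $-b$ against $b$ then converts this into the statement for $L_b$.

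The main obstacle I anticipate is not the computation itself — which is elementary given \cref{ex:Oke} — but the word \emph{canonical}. To get genuinely canonical (rather than merely existing) equivalences one must be careful that the equivalence $\Lscr_b^\vee \simeq \Lscr_{-b}$ used to define $L_b = \Gamma(\Lscr_{-b})^\vee$ is the canonical one coming from additivity as a natural transformation of Picard \emph{groupoids} (cf.\ \cref{conj:hypothesis} and its footnote), and that the normalization $\Lscr_{(1)}\simeq \Otop_{\Escr}(e)[-2]$, the projection formula, and the push-pull identities are all being used with their canonical instances. The footnote to \cref{conj:hypothesis} is precisely what guarantees the canonical decomposition $\Lscr_{b\oplus b'} \simeq \pr_1^*\Lscr_b\tensor\pr_2^*\Lscr_{b'}$ and hence the canonical form of \cref{lem: direct sum} and \cref{lem:directsum}; assuming that, one strings together canonical equivalences only, and the shifts bookkeep to $[-3]$ and $[2]$ as claimed. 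So the proof is short, and the only thing to watch is to cite the groupoid-level naturality rather than the bare existence statements.
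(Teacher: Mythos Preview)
Your proposal is correct, and your alternative sheaf-level argument is essentially identical to the paper's proof: the paper reduces to the $L^b$ version, writes $\Lscr_{b\oplus(k)} \simeq p^*\Lscr_b \tensor \pr_{d+1}^*\Lscr_{(k)}$, applies the projection formula and push-pull to compute $p_*\Lscr_{b\oplus(k)} \simeq \Lscr_b \tensor f^*(p')_*\Lscr_{(k)}$, and then invokes the rank-$1$ pushforward computations from \cref{ex:Oke}.

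Your first approach, via \cref{lem: direct sum} and the rank-$1$ values $L_{(\pm 1)}$, is a valid and slightly more modular repackaging: rather than re-running the projection/push-pull argument in this special case, you invoke the general tensor decomposition (whose proof already contains exactly that argument) and then only need the rank-$1$ computation. This buys brevity and makes the structure clearer; the paper's direct sheaf computation buys nothing extra here, since \cref{lem: direct sum} is already available and does not depend on \cref{lem:KirbyMove}. Your remarks on canonicality are apt and match what is implicitly being used.
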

        \begin{proof}Since $L_b \simeq (L^{-b})^{\vee}$, it will be enough to show that $L^b[-2] \simeq L^{b \oplus (1)}$ and $L^b[3] \simeq L^{b\oplus (-1)}$.
        
        We will apply results from \cref{ex:Oke} in the case that $Y = \Escr^{\times_{\Mscr} d}$ and $E = \Escr^{\times_{\Mscr} (d+1)}$. (The map $p$ is projection onto the first $d$ coordinates.) By construction, the line bundle $\Lscr_{b\oplus (k)}$ is the external tensor product of $\Lscr_q$ on $Y$ and $\Lscr_{(k)}$ on the last factor $\Escr$. Considering the diagram
        \[
        \xymatrix{
        E \ar[r]^{\pr_{d+1}}\ar[d]^p & \Escr \ar[d]^{p'} \\
        Y \ar[r]^{f} & \Mscr
        }
        \]
        we can also view this external tensor product as $p^*\Lscr_b \tensor_{\Otop_E} \pr_{d+1}^*\Lscr_{(k)}$. We can use the projection formula and a push-pull formula to obtain
        \begin{align*}
            p_*(p^*\Lscr_b \tensor_{\Otop_E} \pr^*\Lscr_{cvw}) &\simeq \Lscr_b \tensor_{\Otop_Y} p_*\pr_{d+1}^*\Lscr_{(k)} \\
            &\simeq \Lscr_b \tensor_{\Otop_Y} f^*(p')_*\Lscr_{(k)}
        \end{align*}
        Using \cref{ex:Oke} and $\Lscr_{(k)} \simeq \Otop_{\Escr}(ce)[-2k]$, we see that that $(p')_*\Lscr_{(k)}$ is $\Otop_{\Mscr}[-2]$ if $k=1$ and $\Otop_{\Mscr}[3]$ if $k= -1$. Taking $f^*$ results correspondingly in $\Otop_{Y}[-2]$ and $\Otop_{Y}[3]$. Thus, $p_*\Lscr_{b\oplus (k)} \simeq \Lscr_b[-2]$ or $\Lscr_b[3]$, depending on whether $k=\pm 1$. As pushforward does not change global sections, we see that $L_{b\oplus(k)}$ is $L^b[-2]$ or $L^b[3]$, depending on whether $k=\pm 1$.
        \end{proof}

        \begin{example}\label{ex:unimodular}
    Let $b$ be an unimodular bilinear form (i.e.\ the bilinear form defines an isomorphism of $\Z^d$ with its dual). The corresponding real bilinear form is diagonalizable with $b^+$ many $1$ and $b^-$ many $-1$ on the diagonal.
    
    Every such unimodular $b$ is by \cite[Chapter V.2.1]{SerreArithmetic} stably isomorphic to a bilinear form $d$ represented by a diagonal matrix with entries $\pm 1$. Necessarily there are $b^+$ many $1$ and $b^-$ many $-1$.  \cref{lem:stable} implies $\Lscr_b \simeq \Lscr_d$ and hence $L^b\simeq L^d$ and $L_b\simeq L_d$.  \cref{lem:KirbyMove} implies inductively that $L^b \simeq \TMF[3b^- - 2b^+]$ and $L_b \simeq \TMF[2b^--3b^+]$. 
\end{example}

\begin{remark}\label{rem:Poincare}
    As mentioned in Section \ref{sec: relation to 2-equivariant TMF}, we conjecture that our construction of line bundles should coincide with that of Lurie in the case of quadratic forms. Unfortunately, the details of Lurie's construction are not yet available, but he states that his construction is determined by $\Lscr_{xy}$ being the Poincaré line bundle on $\Escr \times_{\Mscr} \Escr$ (with $xy$ standing for the bilinear form associated to this quadratic form, i.e.\ that given by the matrix $\begin{pmatrix}0&1\\1&0\end{pmatrix}$). This is the line bundle witnessing the autoduality of $\Escr$. But we can show that our construction of $\Lscr_{xy}$ agrees with the Poincaré line bundle at least on underlying classic (i.e.\ non-derived) line bundles and up to powers of $\omega$. We can rewrite the matrix associated to the bilinear form $v_1w_2+v_2w_1$:
    \[\begin{pmatrix}0&1\\1&0\end{pmatrix} = A^T\begin{pmatrix}1&0&0\\0&-1&0\\0&0&-1\end{pmatrix}A \quad \text{ with }\quad A = \begin{pmatrix} 1&1\\-1&0\\0&1\end{pmatrix}.\] 
    Thus, to compute $\Lscr_{xy}$, we have to pull back the line bundle 
    \[\pr_1^*\Oscr(e) \tensor \pr_2^*\Oscr(-e) \tensor \pr_2^*\Oscr(-e)\]
    along the morphism $a\colon \Escr^{\times_{\Mscr} 2} \to \Escr^{\times_{\Mscr} 3}$ induced by $A$. We compute 
    \[A\begin{pmatrix}x\\y\end{pmatrix} = \begin{pmatrix}x+y\\-x\\y\end{pmatrix}.\]
    The first entry is zero iff $x= -y$, the second iff $x=0$, the third iff $y = 0$. Thus, pulling back the divisor $\pr_1^*e$ gives the antidiagonal, pulling back $\pr_2^*e$ the divisor $e\times \Escr$ and pulling back $\pr_3^*e$ the divisor $\Escr\times e$. Thus, the underlying classical line bundle of $\Lscr_{xy}$ is associated to the divisor.  $\Delta^{\mathrm{anti}} - e\times \Escr  -\Escr\times e$. According to \cite[Section 9.4]{Polishchuk}, this agrees with the divisor of the Poincaré line bundle, at least when working over a field instead of $\Mscr$ as a base. Two line bundles on a product of universal elliptic curves over $\Mscr$ are isomorphic up to a power of $\omega$ if they are after base change along an arbitrary morphism $\Spec k\to \Mscr$ from a field.\footnote{Indeed, given two such line bundles $\Lscr$ and $\Lscr'$, the pushforward of the Hom-sheaf $\mathcal{H}om(\Lscr, \Lscr')$ to $\Mscr$ will be a line bundle by ``cohomology and base change'' and thus of the form $\omega^{\omega m}$; see e.g.\ \cite[Corollary 12.9]{Hartshorne}, \cite[Lemma A.7]{MeierDecompositions}, \cite{FultonOlsson}. Thus, the pushforward of $\mathcal{H}om(\Lscr\tensor \omega^{\tensor m}, \Lscr')$ is $\Oscr_{\Mscr}$. The morphism $\Lscr\tensor \omega^{\tensor m}\to \Lscr'$ corresponding to $1\in \Gamma(\Oscr_{\Mscr})$ is an isomorphism because it is one in the fiber over any $\Spec k\to \Mscr$.} Thus, we see indeed that our $\Lscr_{xy}$ agrees with Lurie's derived Poincare line bundle on the level of classical line bundles and up to powers of $\omega$. 
\end{remark}

        \section{Transfers and duality} \label{sec: cobordisms}
        
        This section has two goals. First, we explain how $L^b$ has contravariant functoriality in the bilinear form $b$ and $L_b$ has covariant functoriality. Motivated by equivariant homotopy theory, we call these functorialities \emph{restriction} and \emph{transfer}. Second, we explain how $L^b$ and $L_b$ are equivalent up to a shift and make this equivalence explicit in the case of $b = (0)$. 

        \begin{defi}
        Let $f\colon F \to G$ be a morphism of finitely generated free abelian groups and let $b$ be a symmetric bilinear form on $G$. We have \emph{restriction} morphisms
        \[\res_f\colon \Lscr_b \to f_*\Lscr_{f^*b}\quad \text{ and }\quad \res_f\colon L^{b} \to L^{f^*b},\]
        where the first is adjoint to the equivalence $f^*\Lscr_b \simeq \Lscr_{f^*b}$ and the latter is obtained by applying global sections. 

        We further define the \emph{transfer} 
        \[\tr_f\colon L_{f^*b} \to L_b\]
        as the dual of $\res_f\colon L^{-b}\to L^{f^*(-b)}$, using the equivalences $L_b = (L^{-b})^{\vee}$ and $L_{f^*b} = (L^{f^*(-b)})^{\vee}$. 
        \end{defi}

        Essentially by construction, restriction and transfer are compatible with direct sums and compositions. Let us state the compatibility with direct sums explicitly. 
        \begin{lemma}\label{lem:RestrictionTransferDirectSum}
            Given a homomorphism $f\colon F\to G$ of finitely generated free abelian groups and symmetric bilinear forms $b$ and $b'$ on $G$, the square
            \[ \xymatrix{
L^{f^*b}\otimes_{\TMF}L^{f^*b'} \ar[rr]^{\res_f\otimes\res_f}\ar[d] & &
L^b\otimes_{\TMF}L^{b'}\ar[d]\\
L^{f^*(b\oplus b')} \ar[rr]^{\res_f} && L^{b\oplus b'} 
}
            \]
            commutes. Here, the vertical equivalences are those from \cref{lem: direct sum}. Dually, the square involving $L_b$, $L_{b'}$ and $\tr_f$ commutes as well.
        \end{lemma}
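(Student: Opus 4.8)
The plan is to push the statement down to the level of sheaves and then invoke that global sections are symmetric monoidal. On the level of $\TMF$-modules, $\res_f$ is obtained by applying the global-sections functor $\Gamma$ to the sheaf map $\Lscr_c\to f_*\Lscr_{f^*c}$ (the unit of the $(f^*,f_*)$-adjunction attached to the functoriality equivalence $f^*\Lscr_c\simeq\Lscr_{f^*c}$, where we abusively write $f$ also for the induced map $\Escr\tensor F\to\Escr\tensor G$), and $\Gamma$ on $\Escr\tensor\Lambda$ factors as $\Gamma_{\Mscr}\circ q_*$ for the structure map $q\colon\Escr\tensor\Lambda\to\Mscr$, with $\Gamma_{\Mscr}$ symmetric monoidal by \cite{MathewMeier}. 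So it suffices to show that applying $q_*$ to the relevant sheaf-level square produces a commuting square of $\Otop_{\Mscr}$-modules; applying $\Gamma_{\Mscr}$ then yields the displayed square of $\TMF$-modules. (Throughout, $\res_f$ denotes restriction along $f\oplus f$ and $f^*(b\oplus b')$ means $(f\oplus f)^*(b\oplus b')=f^*b\oplus f^*b'$, as is forced by the vertical maps being the Künneth equivalences of \cref{lem: direct sum}, which exist for $\oplus$ but not for $+$.)

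First I would record the two sheaf-level inputs. By \cref{lem:directsum}, $\Lscr_{b\oplus b'}\simeq\pr_1^*\Lscr_b\tensor\pr_2^*\Lscr_{b'}$ on $\Escr\tensor(G\oplus G)$, and similarly for $f^*b\oplus f^*b'$ on $\Escr\tensor(F\oplus F)$; the proof of \cref{lem: direct sum} turns these external tensor products, after $q_*$, into the Künneth equivalences $q_*\Lscr_{b\oplus b'}\simeq q_*\Lscr_b\tensor_{\Otop_{\Mscr}}q_*\Lscr_{b'}$ via the projection and push--pull formulas. On the other hand $f\oplus f$ is block diagonal, so the induced map $\Escr\tensor(F\oplus F)\to\Escr\tensor(G\oplus G)$ is the fiber product over $\Mscr$ of two copies of the map induced by $f$, and in particular commutes with the two projections $\pr_i$; hence $(f\oplus f)^*$ carries external tensor products to external tensor products, and the equivalence $(f\oplus f)^*\Lscr_{b\oplus b'}\simeq\Lscr_{f^*b\oplus f^*b'}$ is identified with the external tensor product of $f^*\Lscr_b\simeq\Lscr_{f^*b}$ and $f^*\Lscr_{b'}\simeq\Lscr_{f^*b'}$.

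The one thing requiring genuine care is the compatibility of adjunction units with external tensor products: for a block-diagonal (product) morphism the unit $p_1^*M_1\tensor p_2^*M_2\to(f\oplus f)_*(f\oplus f)^*(p_1^*M_1\tensor p_2^*M_2)$ is, under the identifications above together with the Künneth formula for pushforward along a product, the external tensor product of the units of the two $(f^*,f_*)$-adjunctions. Granting this --- it is a formal consequence of flat base change and the projection formula, i.e.\ precisely the coherences already invoked in the proof of \cref{lem: direct sum} --- one chases the identification of the preceding paragraph through the chain of equivalences in that proof, checking naturality in the restriction maps at each step. This gives commutativity of the $q_*$-level square, and $\Gamma_{\Mscr}$ concludes the $L^b$-statement. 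The statement for $L_b,L_{b'},\tr_f$ follows formally by applying $\Hom_{\TMF}(-,\TMF)$ to the square just proven for $-b,-b'$: by definition $L_c=(L^{-c})^{\vee}$ and $\tr_f=(\res_f)^{\vee}$, and the Künneth equivalence for the $L_c$'s is defined in \cref{lem: direct sum} to be the dual of the one for $L^{-c}$; since $L^{-b}$ and $L^{-b'}$ are dualizable $\TMF$-modules (as $\Gamma_{\Mscr}$, being symmetric monoidal, preserves dualizable objects, and $q_*\Lscr_{-b}$ is a perfect $\Otop_{\Mscr}$-module), dualization is symmetric monoidal on the relevant full subcategory and sends the restriction square to the transfer square.

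I expect the only real obstacle to be organizational: assembling the base-change isomorphisms $f^*\pr_i^*\simeq\pr_i^*f^*$, the projection and push--pull formulas, and the product decomposition of adjunction units into a single homotopy-coherent commuting diagram, and in particular tracking the coherences left implicit in the proof of \cref{lem: direct sum}. Each ingredient is standard --- this is the sense in which the compatibility holds ``essentially by construction'' --- but the $\infty$-categorical bookkeeping is where the work lies.
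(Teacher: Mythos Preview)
Your proposal is correct and is precisely the unpacking of what the paper means by ``essentially by construction'' --- the paper supplies no proof beyond that phrase, stating the lemma immediately after the sentence ``Essentially by construction, restriction and transfer are compatible with direct sums and compositions.'' Your sheaf-level argument via the symmetric monoidality of $\Gamma_{\Mscr}$, the external-tensor-product description of $\Lscr_{b\oplus b'}$, and the compatibility of adjunction units with products is exactly the intended justification, and you have correctly identified that the only content is the $\infty$-categorical bookkeeping.
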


        \begin{construction}\label{constr:dhedafunction}
            Let $b$ be a unimodular bilinear form on $\Z^d$ with $b^{\pm}$ many eigenvalues $\pm 1$. By \cref{ex:unimodular}, $L^b \simeq \TMF[3b^- - 2b^+]$ and $L_b \simeq \TMF[2b^--3b^+]$. Along $f\colon 0\to \Z^d$, the bilinear form $b$ restricts to the trivial bilinear form $()$. Thus, $\res_f$ defines a $\TMF$-linear map
            \begin{equation}\label{eq:db}\TMF[3b^- - 2b^+] \simeq L^b \to L^{()}\simeq \TMF,\end{equation}
            corresponding to an element $\mathfrak{d}_b\in \pi_{3b^--2b^+}\TMF$; the same element corresponds to the dual map
            \[ \TMF = L_{()} \xrightarrow{\tr_f} L_{-b} \simeq \TMF[2b^+-3b^-].\]
            As we did not specify the equivalences in \eqref{eq:db}, the map in \eqref{eq:db} is only well-defined up to $\TMF$-linear automorphisms of $\TMF$. Since the invertible elements in $\pi_0\TMF\cong \Z[j]$ are just $\pm 1$, this means that $\mathfrak{d}_b$ is well-defined up to sign. 
        \end{construction}

\subsection{Duality}
Our main goal in this section is to show that $L^b$ and $L_b$ are dual up to a shift. Recall that $L^b$ are the global sections of the derived line bundle $\Lscr_b$ on $\Escr^{\times_{\Mscr}d}$ and $L_b$ is the dual of the global sections of the dual $\Lscr_{-b}$ of $\Lscr_b$. Thus, we have to investigate how taking dual and global sections interact. 

As a warm-up, we consider instead of a derived line bundle on a stack a usual line bundle $\Lscr$ on a smooth projective variety $X$ of dimension $d$. Serre duality tells us that $H^i(X; \Lscr)^{\vee} \cong H^{d-i}(X;  \Lscr^{\vee}\tensor \Omega^d_X)$, where $\Omega^d_X$ is the highest exterior power of the sheaf of differentials. Thus, up to cohomological degree and the dualizing sheaf $\Omega^d_X$, dualizing plays well with taking derived global sections, i.e.\ cohomology. 

We need to upgrade this in two ways: first, we do not have a variety over a field but a ``relative variety'' over the stack $\MM$. Secondly, we are not working in classical algebraic geometry, but in spectral algebraic geometry. Such a generalization is exactly the topic of \emph{spectral Grothendieck duality} as in \cite{SAG}. The change of cohomological degree in Serre duality will translate in our setting into a shift by the relative dimension (namely $d$) and the identification of the dualizing sheaf will result in another shift (by $-2d$), resulting in the overall result $L^b \simeq L_b[-d]$ in \cref{thm:Lbduals}.

Instead of just having a result of the global sections of $\Lscr_b$, we will give a more generally applicable result on more general pushforwards. The case of global sections follows by pushing forward to $\MM$ (since derived global sections on $\MM$ preserve duals), as explained in the proof of \cref{thm:Lbduals}.

         \begin{thm}\label{prop:duals}
            Let $f\colon F \to G$ be a homomorphism between finitely generated free abelian groups, and denote the induced morphism $\Escr \tensor F \to \Escr \tensor G$ by $f$ as well. Let $\Lscr$ be a quasi-coherent $\Otop_{\Escr\tensor F}$-module (e.g.\ a derived line bundle on $\Escr \otimes F$). Then there is a natural equivalence \[(f_*\Lscr)^{\vee} \simeq f_* (\Lscr^{\vee})[\rk G - \rk F].\] Here, $\Lscr^{\vee}$ denotes the dual $\mathcal{H}om_{\Otop_{\Escr\tensor F}}(\Lscr, \Otop_{\Escr\tensor F})$. 
        \end{thm}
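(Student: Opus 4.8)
The statement is a form of relative spectral Grothendieck–Serre duality for the morphism $f\colon \Escr\tensor F\to \Escr\tensor G$, so the plan is to deduce it from the general machinery of \cite{SAG} together with an identification of the relative dualizing complex. The key structural facts I would use are: (i) the morphism $f$ is (after choosing coordinates $F\cong\Z^e$, $G\cong\Z^d$ with a matrix $A$) a composition of base changes of the single projection $\pr\colon\Escr\to\Mscr$ and of equivalences, so it suffices to treat that building block; (ii) $\pr\colon\Escr\to\Mscr$ is a proper, flat, relative curve in the derived sense, hence admits a relative dualizing sheaf $\omega_{\Escr/\Mscr}^{\top}$ and a coherent duality equivalence $(\pr_*\mathcal F)^\vee\simeq \pr_*(\mathcal F^\vee\tensor\omega_{\Escr/\Mscr}^{\top})[-1]$ for perfect $\mathcal F$; and (iii) the relative dualizing sheaf of the universal elliptic curve is trivial up to a shift: $\omega_{\Escr/\Mscr}^{\top}\simeq\Otop_{\Escr}[-2]$. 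Granting these, the shift $[-1]$ from a single curve, combined over the $\rk F=e$ factors being added minus the $\rk G=d$ factors, and the $[-2]$ from each trivialization of the dualizing sheaf cancelling... one bookkeeps to the net shift $[\rk G-\rk F]$ claimed.

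More concretely, first I would reduce to the case where $f$ is a projection $\Escr\tensor F\to\Escr\tensor G$ corresponding to a surjection $F\twoheadrightarrow G$ splitting off a free complement, and the case where $f$ is a closed immersion (a unit section of some $\Escr$-factors, i.e.\ the relative Cartier divisor $e$). For a general $A$ one factors $f$ through its image; equivalences contribute no shift, so one handles the two extreme cases separately and composes. For the projection case $\Escr\tensor(G\oplus\Z^k)\to\Escr\tensor G$, this is the $k$-fold fiber-product base change of $\pr\colon\Escr\to\Mscr$ along $\Escr\tensor G\to\Mscr$; base change for the dualizing sheaf and for $f_*$ (using properness and the projection formula, available in $\Otop$-module categories by \cite{SAG}) reduces it to $k$ applications of duality for $\pr$ itself, each giving a shift by $\rk G-\rk(G\oplus\Z^k)=-k$ after the dualizing sheaf is trivialized. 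For the closed-immersion case one uses that $e\colon\Mscr\to\Escr$ is a regular closed immersion of virtual codimension $1$ with trivial conormal bundle (again the computation $e^*\Otop_{\Escr}(e)\simeq\Otop_{\Mscr}[2]$ from \cref{lem:Normalization} is the relevant input), so local duality along $e$ gives a shift by $+1$, matching $\rk G-\rk F=+1$ there.

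The two computational inputs I would isolate as lemmas are: (a) $\pr_*\Otop_{\Escr}\simeq\Otop_{\Mscr}\oplus\Otop_{\Mscr}[-1]$ together with the identification of the relative dualizing sheaf $\omega_{\Escr/\Mscr}^{\top}$ — the latter I expect to follow from Lurie's construction of $\Otop_{\Escr}$ via equivariant elliptic cohomology in the style of \cite{GepnerMeier}, where the cotangent complex of $\Escr/\Mscr$ is identified, so that $\omega_{\Escr/\Mscr}^{\top}$ is an invertible $\Otop_{\Escr}$-module whose restriction along $e$ is $\omega[-2]$, and one upgrades this to a global trivialization using that $\Pic(\Escr,\Otop_{\Escr})$ is controlled (every invertible $\Otop_{\Escr}$-module restricting trivially to $e$ and having the right homotopy sheaves is trivial, by an argument parallel to \cref{UniquenessOfComplexLooijenga}); and (b) the naturality of the duality equivalence in $f$, i.e.\ that the isomorphism is compatible with composition of morphisms and with pullback of $\Lscr$, which is needed so that the later functoriality statements about $\res_f$ and $\tr_f$ go through.

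The main obstacle will be input (a): pinning down the relative dualizing sheaf $\omega_{\Escr/\Mscr}^{\top}$ of the \emph{derived} universal elliptic curve and showing it is $\Otop_{\Escr}[-2]$ on the nose (with a canonical equivalence, not merely abstractly), since this is where spectral algebraic geometry genuinely differs from the classical Serre-duality picture and where the full strength of \cite{SAG} (dualizing complexes for spectral Deligne–Mumford stacks, the relative version, base change, and properness of $\Escr\to\Mscr$) must be invoked carefully. In the classical shadow this is just the statement that an elliptic curve has trivial canonical bundle; the derived refinement should hold because $\Escr\to\Mscr$ is an abelian group object, so its relative cotangent complex is pulled back from $e$, but making the shift and the trivialization canonical — and compatible with the group structure, which is what ultimately feeds \cref{thm:Lbduals} and \cref{prop:explicitduality} — is the delicate part. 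Everything else is bookkeeping of shifts and standard six-functor formalism in $\Otop$-modules.
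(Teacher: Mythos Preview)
Your overall strategy---invoke spectral Grothendieck duality from \cite{SAG} and reduce to identifying the relative dualizing complex---is exactly what the paper does. But your execution diverges from the paper's in several places, and one of them is a genuine error.

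First, the paper does not decompose $f$ into projections and closed immersions. Instead it uses transitivity of dualizing complexes (\cite[Corollary 6.4.2.8]{SAG}) to reduce immediately to the single case $G=0$, i.e.\ to identifying $\omega^{\top}_{\Escr\tensor F/\Mscr}$. This is cleaner: once that case is done, the general $f$ follows from transitivity applied to $\Escr\tensor F\to\Escr\tensor G\to\Mscr$, with no case analysis. Your factorization through projections and unit sections would work but is unnecessary.

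Second, and more importantly, your shift bookkeeping is wrong. You claim $\omega^{\top}_{\Escr/\Mscr}\simeq\Otop_{\Escr}[-2]$ and then build in an additional $[-1]$ in the duality formula, but the paper shows $\omega^{\top}_{\Escr\tensor F/\Mscr}\simeq\Otop_{\Escr\tensor F}[-\rk F]$, so for a single elliptic curve the dualizing complex is $\Otop_{\Escr}[-1]$, not $[-2]$. In the SAG convention (Corollary 6.4.2.7) the duality equivalence is $(f_*\Lscr)^\vee\simeq f_*(\Lscr^\vee\otimes\omega_f)$ with no extra shift; the $[-1]$ is already inside $\omega_f$. Your ``one bookkeeps to the net shift'' hand-wave does not close.

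Third, the paper's method for pinning down the shift is different and slicker than what you propose. You want to compute the dualizing sheaf on $\Escr$ directly and argue via control of $\Pic(\Escr,\Otop_{\Escr})$. The paper instead uses \cite[Proposition 5.2.3]{LurEllI} (the dualizing complex of an abelian group object is pulled back from the identity section $e$) to reduce to an invertible $\Otop_{\Mscr}$-module on $\Mscr$, and then invokes the known computation $\Pic(\Mscr,\Otop)\cong\Z/576$ from \cite{MathewStojanoska} to conclude it is $\Otop[n]$ for some $n$. The value of $n$ is not computed directly but determined by a bootstrap: plugging in the already-known $L_b$ for a diagonal positive-definite form (from \cref{lem:KirbyMove}) forces $n=-\rk F$. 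This avoids any Picard computation on $\Escr$ itself, which is not readily available.
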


  \begin{proof}[Proof of \cref{prop:duals}: ]
We first reduce to the connective case, i.e.\ we will show our proposition with $\Otop_{\Escr\tensor F}$ and $\Otop_{\Escr\tensor G}$ replaced by their connective covers $\tau_{\geq 0}\Otop_{\Escr\tensor F}$ and $\tau_{\geq 0}\Otop_{\Escr\tensor G}$. Applying $\otimes_{\tau_{\geq 0}\Otop_{\Escr\tensor G}}\Otop_{\Escr\tensor G}$ recovers the original proposition (using that the canonical map $\tau_{\geq 0}\Otop_{\Escr\tensor F}\otimes_{\tau_{\geq 0}\Otop_{\Escr\tensor G}}\Otop_{\Escr\tensor G} \to  \Otop_{\Escr\tensor F}$ is an equivalence). We will leave the connective covers implicit in the rest of the argument. 
            
            The theory of spectral Grothendieck duality from \cite{SAG} (and \cite[Corollary 6.4.2.7]{SAG} in particular) implies that it suffices to identify the relative dualizing complex $\omega^{\top}_{\Escr\tensor F/\Escr\tensor G}$ of 
            \[f\colon (\Escr\tensor F, \Otop_{\Escr\tensor F}) \to (\Escr\tensor G, \Otop_{\Escr\tensor G})\]
            with $\Otop_{\Escr\tensor F}\tensor S^{\rk G- \rk F}$. The transitivity of dualizing complexes (\cite[Corollary 6.4.2.8]{SAG}) implies that it suffices to establish the existence of an equivalence  $h\colon \Otop_{\Escr\tensor F}\tensor S^{-\rk F} \to \omega^{\top}_{\Escr\tensor F/\Mscr}$.

            By \cite[Proposition 5.2.3]{LurEllI},  $\omega^{\top}_{\Escr\tensor F/\Mscr} \simeq p^*e^* \omega^{\top}_{\Escr\tensor F/\Mscr}$ for 
            \[\label{eq:ep} \Mscr \xrightarrow{e} \Escr \tensor F \xrightarrow{p}\Mscr\]
            being unit and projection, respectively. Applying \cite[Corollary 6.4.2.8]{SAG} to this composition and noting $pe = \id_{\Mscr}$, we obtain 
            \begin{equation}\label{eq:transitivity}\Otop \simeq \omega^{\top}_{\Mscr/\Mscr} \simeq e^*\omega^{\top}_{\Escr\tensor F/\Mscr} \otimes \omega^{\top}_{\Mscr/\Escr\tensor F}.\end{equation}
  
            Thus, $e^*\omega^{\top}_{\Escr\tensor F/\Mscr}$ is an invertible $\Otop$-module. 
            By \cite[Theorem A]{MathewStojanoska} and the equivalence $\QCoh(\Mscr, \Otop) \simeq \Mod_{\TMF}$ from \cite{MathewMeier}, we know that the Picard group $\mathrm{Pic}(\Mscr, \Otop)\cong \Z/576$ is generated by $\Otop[1]$. Thus, every invertible $\Otop$-module is of the form $\Otop[n]$ for some $n$ (well defined modulo $576$). Thus, $\omega^{\top}_{\Escr \tensor F/\Mscr} \simeq \Otop_{\Escr}[n]$. The proof of \cref{thm:Lbduals} below shows that this implies that $L_b^{\vee} \simeq L_{-b}[n]$ as $\TMF$-modules. Recall from \cref{lem:KirbyMove} that $L_{b} \simeq  \TMF[-3\rk F]$ and $L_{-b} \simeq \TMF[2\rk F]$ if $b$ is a bilinear form on $F\cong \Z^{\rk F}$ represented by a diagonal matrix with ones on the diagonal. This implies that $n$ can be taken to be $-\rk F$. 
            \end{proof}

            \begin{remark}
                By \eqref{eq:transitivity}, the preceding proposition is equivalent to \begin{equation}\label{eq:omegaM}\omega^{\top}_{\Mscr/\Escr\tensor F} \simeq \Otop[\rk F].\end{equation}
                Moreover, the claimed equivalences $(f_*\Lscr)^{\vee} \simeq f_* (\Lscr^{\vee})[\rk G - \rk F]$ only depend on the choice of  equivalence \eqref{eq:omegaM} for each $F$. Since $\pi_0(\Gamma(\Otop))^\times \cong \Z[j]^\times = \{\pm 1\}$, we thus have an ambiguity at most up to a sign. This can be fixed as well by specifying the equivalence in a single example for each $F$.    \end{remark}

            \begin{remark}
                The proof of \cref{prop:duals} depends on \cite{MathewMeier} and \cite{GepnerMeier} (and thus, indirectly, on equivariant homotopy theory). We outline a possible alternative proof of \eqref{eq:omegaM} and thus \cref{prop:duals} without such a reliance:  The dualizing sheaf $\omega^{\top}_{\Mscr/\Escr\tensor F}$ agrees with $\omega^{\top}_{\Mscr/\widehat{\Escr\tensor F}}$, where $\widehat{\Escr\tensor F}$ denotes the completion of $\Escr \tensor F$ at $\Mscr$ (we skip here over subtleties of formal spectral algebraic geometry.) The orientation of $(\Escr, \Otop_{\Escr})$ (as in \cite{LurEllII}) identifies $\widehat{\Escr\tensor F}$ with the relative formal spectrum of the function spectrum of maps from $({\CP^{\infty}})^{\rk F}$ to   $\Otop$ on $\Mscr$. We expect that, analogously to \cite[Example 6.4.2.9]{SAG}, this implies that \[\omega^{\top}_{\Mscr/\widehat{\Escr\tensor F}}\simeq \Otop[2\rk F - \rk F] \simeq \Otop[\rk F].\]  
            \end{remark}

\begin{thm}\label{thm:Lbduals}
    Let $b$ be a bilinear form on a finitely generated abelian group $F$ of rank $d$. Then there is an equivalence of $\TMF$-modules of $L^b\simeq L_{-b}^{\vee}$ with $L_{b}[d]$. This equivalence can be chosen to be functorial in the bilinear form and is canonically defined up to sign. 
         \end{thm}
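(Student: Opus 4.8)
The plan is to deduce the statement directly from \cref{prop:duals}, combined with the additivity equivalence $\Lscr_{b}^{\vee}\simeq\Lscr_{-b}$ and the fact that derived global sections on $\Mscr$ form a symmetric monoidal \emph{equivalence} onto $\Mod_{\TMF}$.

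First I would unwind the definitions. Write $p\colon\Escr\tensor F\to\Mscr$ for the structure map, identified with the morphism induced by the homomorphism $F\to 0$ (so $\Mscr=\Escr\tensor 0$). Global sections factor through the pushforward, so $L^{-b}=\Gamma(\Lscr_{-b})\simeq\Gamma(\Mscr;p_{*}\Lscr_{-b})$ and therefore, by \cref{def:LbLb}, $L_{b}\simeq(L^{-b})^{\vee}\simeq\Gamma(\Mscr;p_{*}\Lscr_{-b})^{\vee}$. Now apply \cref{prop:duals} to $f=p$ and $\Lscr=\Lscr_{-b}$: since $\rk 0-\rk F=-d$ and $\Lscr_{-b}^{\vee}\simeq\Lscr_{b}$ by additivity, it produces a natural equivalence
\[(p_{*}\Lscr_{-b})^{\vee}\simeq p_{*}\Lscr_{b}[-d]\]
of quasi-coherent $\Otop_{\Mscr}$-modules. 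Transporting this along the symmetric monoidal equivalence $\Gamma(\Mscr;-)\colon\QCoh(\Mscr,\Otop_{\Mscr})\xrightarrow{\ \sim\ }\Mod_{\TMF}$ of \cite{MathewMeier} --- which, being an equivalence, commutes with forming duals with no dualizability hypothesis --- the left-hand side becomes $\Gamma(\Mscr;p_{*}\Lscr_{-b})^{\vee}\simeq L_{b}$ and the right-hand side becomes $\Gamma(\Mscr;p_{*}\Lscr_{b})[-d]\simeq L^{b}[-d]$. Hence $L_{b}\simeq L^{b}[-d]$, i.e.\ $L^{b}\simeq L_{b}[d]$.

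Two points deserve care. First, the proof of \cref{prop:duals} itself cites \cref{thm:Lbduals} in order to pin the relevant dualizing-sheaf shift to $-\rk F$, so to make the present argument self-contained I would run the reasoning above only up to the abstract conclusion $L^{b}\simeq L_{b}[n]$ for \emph{some} integer $n$ (the dualizing sheaf $\omega^{\top}_{\Escr\tensor F/\Mscr}$ being an invertible $\Otop_{\Mscr}$-module, hence a shift of $\Otop_{\Mscr}$), and then evaluate on one example. Taking $b$ to be the rank-$d$ positive-definite diagonal unimodular form, \cref{lem:stable} and \cref{lem:KirbyMove} (as packaged in \cref{ex:unimodular}) give $L^{b}\simeq\TMF[-2d]$ and $L_{b}\simeq\TMF[-3d]$, forcing $n\equiv d$; since a $576$-fold shift of a $\TMF$-module is trivial, the module equivalence is exactly $L^{b}\simeq L_{b}[d]$. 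Second, functoriality and the sign: each ingredient is natural --- \cref{prop:duals} is natural in $f$, the equivalence $\Gamma(\Mscr;-)$ is canonical, and the additivity equivalences $\Lscr_{b}^{\vee}\simeq\Lscr_{-b}$ are canonical by (the refined, Picard-groupoid form of) \cref{conj:hypothesis} --- so the equivalence $L^{b}\simeq L_{b}[d]$ inherits naturality in $b$, with the contravariant restriction maps $\res_{f}$ on $L^{\bullet}$ matching the covariant transfer maps $\tr_{f}$ on $L_{\bullet}$ up to the appropriate shifts; the residual ambiguity is a single sign, coming from $\pi_{0}\TMF^{\times}=\{\pm 1\}$, which is rigidified by fixing the equivalence in one instance.

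I expect the genuine obstacle to lie in this last bookkeeping rather than in the existential statement: one must run the naturality square of spectral Grothendieck duality against a morphism $\Escr^{A}\colon\Escr\tensor F\to\Escr\tensor G$, keep track of the base-change equivalences for the relative dualizing sheaves, and verify that the degree shifts and the dual-versus-transfer variances align consistently (in particular that the square of \cref{lem:RestrictionTransferDirectSum} is respected). The purely formal content --- that $L^{b}$ and $L_{b}$ are shift-dual --- is immediate once \cref{prop:duals} is in hand.
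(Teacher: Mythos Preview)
Your proposal is correct and follows essentially the same route as the paper: apply \cref{prop:duals} with target $G=0$, use additivity $\Lscr_b^{\vee}\simeq\Lscr_{-b}$, and invoke \cite{MathewMeier} to pass from $\QCoh(\Mscr,\Otop_{\Mscr})$ to $\Mod_{\TMF}$ preserving duals. The paper applies \cref{prop:duals} to $\Lscr_b$ rather than $\Lscr_{-b}$ and then dualizes at the end, but this is an immaterial reordering; your explicit treatment of the apparent circularity with \cref{prop:duals} (resolving the shift by evaluating on the diagonal form) is a welcome clarification of what the paper leaves implicit.
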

        \begin{proof} 
        We apply \cref{prop:duals} with $G = 0$ and $\Lscr = \Lscr_b$. By additivity, we know that $(\Lscr_b)^{\vee} \simeq \Lscr_{-b}$. If $f\colon \Escr \tensor F \to \Mscr$ is the projection, we thus obtain $(f_*\Lscr_b)^{\vee} \simeq f_*\Lscr_{-b}[-d]$. 

        From \cite{MathewMeier}, we know that the global sections functor from quasi-coherent $\Otop_{\Mscr}$-modules to $\TMF$-modules is symmetric monoidal (in fact, a symmetric monoidal equivalence). In particular, it preserves duals. As applying $f_*$ does not change global sections, we obtain indeed $(L^b)^{\vee} \simeq L^{-b}[-d]$, where now the dual is taken in $\TMF$-modules. Dualizing this equivalence, yields indeed $L^b \simeq (L^{-b}[-d])^{\vee} L_b[d]$. 
        \end{proof}

        This equivalence allows to see $L_b$ as both a covariant and contravariant functor in bilinear forms, via transfer and restriction.

\subsection{The example of $L_{(0)}$}\label{sec:exampleL0}
In this section, we will look in more detail at the example of $L^{(0)} \simeq \Gamma(\Otop_{\Escr})$, where $(0)$ denotes the zero bilinear form on $\Z$.. It was already mentioned in \cref{ex:Oke} that $L^{(0)} \simeq \TMF \oplus \TMF[1]$. By definition, $L_{(0)} \simeq (L^{(0)})^{\vee} \simeq \TMF\oplus \TMF[-1]$. Thus, \cref{thm:Lbduals} seems evident in this case. We will explain, however, below that the canonical equivalence from \cref{thm:Lbduals} is \emph{not} the ``obvious'' equivalence. 

For this, we first need to recall \emph{how} $\Gamma(\Otop_{\Escr})$ was identified with $\TMF \oplus \TMF[1]$ in \cite{GepnerMeier}. The map $\TMF = \Gamma(\Otop) \to \Gamma(\Otop_{\Escr})$ is pullback along $p\colon \Escr \to \Mscr$. The map $\TMF[1] \to \Gamma(\Otop_{\Escr})$ is the \emph{degree-shifting transfer}, which is constructed using \emph{$U(1)$-equivariant elliptic cohomology}. As mention earlier, (reduced) $U(1)$-equivariant elliptic cohomology is a functor 
\[\Oscr_{\Escr}^{(-)}\colon (\text{finite pointed }U(1)\text{-CW complexes})^{\op} \to \QCoh(\Escr, \Otop_{\Escr}).\]
Two key examples are $\Oscr_{\Escr}^{S^0} = \Otop_{\Escr}$ and $\Oscr_{\Escr}^{U(1)_+} = e_*\Otop$, where the plus stands for adding a disjoint basepoint and $e\colon \Mscr\to \Escr$ is the inclusion of the unit of the universal elliptic curve.  By \cite[Proposition 9.2]{GepnerMeier}, $U(1)$-equivariant elliptic cohomology factors over the stable category and thus we obtain an induced functor 
\[\Oscr_{\Escr}^{(-)}\colon (\text{finite }U(1)\text{-spectra})^{\op} \to \QCoh(\Escr, \Otop_{\Escr}).\]

To actually construct the transfer, embed $U(1)$ into the one-point compactification $S^{\C}$ of $\C$. This defines a $U(1)$-equivariant Thom collapse map from $S^{\C}$ to the Thom space of the normal bundle. Desuspending by $S^{\C}$ defines a stable $U(1)$-equivariant map $t\colon S^0 \to \Sigma^{-1}U(1)_+ = U(1)_+[-1]$, using the left-invariant framing of the tangent bundle of $U(1)$. (By the usual Pontryagin--Thom construction, postcomposing this stable map by $\Sigma^{-1}U(1)_+ \to \Sigma^{-1}\pt_+ = S^{-1}$ is the stable homotopy element corresponding to the framed manifold $U(1)$, i.e.\ $\eta\in \pi_1^{\mathrm{st}}S^0\cong \Z/2$, when disregarding the $U(1)$-action.) Applying equivariant elliptic cohomology to $t$, yields a map
\[e_*\Otop[1] = \Oscr_{\Escr}^{U(1)_+[-1]} \to  \Oscr_{\Escr}^{S^0}=\Otop_{\Escr}. \] 
Upon taking global sections, this gives the desired map $$\TMF[1] = \Gamma(\Otop)[1] \simeq \Gamma(e_*\Otop)[1]  \to \Gamma(\Otop_{\Escr}).$$ By  \cite[Theorem 10.1]{GepnerMeier}, this gives the desired equivalence \begin{equation}\label{eq:equivalence}
    (p^*, \mathrm{tr})\colon \TMF\oplus \TMF[1]\to \Gamma(\Otop_{\Escr}) =L^{(0)}.
\end{equation}

\begin{lemma}\label{lem:restriction}
Using the identifications above, the restriction 
\[e^*\colon L^{(0)} \simeq \Gamma(\Otop_{\Escr}) \simeq \TMF\oplus \TMF[1] \to L^{()} \simeq \Gamma(\Otop) = \TMF\]
is given by the matrix $(1\, \eta)$. 
\end{lemma}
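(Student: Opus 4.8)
The plan is to compute the two components of $e^*$ separately, exploiting the description of the equivalence \eqref{eq:equivalence} as $(p^*, \mathrm{tr})$. The first component is the composite $\TMF \xrightarrow{p^*} \Gamma(\Otop_{\Escr}) \xrightarrow{e^*} \TMF$. Since $pe = \id_{\Mscr}$, this composite is the identity on $\TMF$, giving the entry $1$. So the only real content is the second entry: I must show that the composite of the degree-shifting transfer $\TMF[1] \to \Gamma(\Otop_{\Escr})$ with restriction $e^*$ is multiplication by $\eta \in \pi_1\TMF$.

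For this, I would work entirely in $U(1)$-equivariant elliptic cohomology, where the transfer is defined. Recall the transfer arises from applying $\Oscr_{\Escr}^{(-)}$ to the stable $U(1)$-equivariant map $t\colon S^0 \to U(1)_+[-1]$ coming from the Thom--Pontryagin collapse, and restriction along $e$ corresponds (by \cite[Section 2.3]{GepnerMeier2}, as used in the proof of \cref{lem:Normalization}) to the functor $\res$ forgetting the $U(1)$-action. So the composite $e^* \circ \mathrm{tr}$ is obtained by applying \emph{non-equivariant} elliptic cohomology, i.e.\ $\Gamma(\Otop)(-) = \TMF(-)$, to the underlying non-equivariant stable map of $t$. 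The key point is then purely a statement about stable homotopy: the underlying non-equivariant map of $t\colon S^0 \to U(1)_+[-1]$, postcomposed with the collapse $U(1)_+[-1] \to S^{-1}$, is the class of $U(1) \cong S^1$ with its left-invariant (Lie group) framing, which is the generator $\eta \in \pi_1^{\mathrm{st}} S^0$; this is exactly the parenthetical remark already recorded in the paragraph preceding the lemma. Applying $\TMF$-cohomology (equivalently, smashing with $\TMF$ and using that $e^*$ on the $\TMF[1]$-summand factors through $\Gamma(e_*\Otop)[1] \simeq \TMF[1]$), multiplication by this stable class becomes multiplication by the image $\eta \in \pi_1\TMF$.

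The main obstacle is bookkeeping the identifications carefully: one must check that restricting along $e$ really does correspond, under the identification $\Oscr_{\Escr}^{U(1)_+} \simeq e_*\Otop$ and $\Oscr_{\Escr}^{S^0} \simeq \Otop_{\Escr}$, to simply forgetting the $U(1)$-action on the indexing spectrum, and that this forgetful operation carries the framing on $t$ to the Lie framing on $S^1$. The first is a compatibility statement from \cite{GepnerMeier, GepnerMeier2} about how $e^*$ interacts with the equivariant elliptic cohomology functor (the same input already invoked in \cref{lem:Normalization}); the second is the standard fact that the Adams/Thom-collapse construction of the transfer for $U(1) \hookrightarrow S^{\C}$, when the group is forgotten, recovers the framed bordism class of $U(1)$ with its left-invariant framing. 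Granting these, the conclusion that $e^* = (1\ \eta)$ is immediate, so I expect the write-up to be short once the identification of the underlying stable map with $\eta$ is pinned down.
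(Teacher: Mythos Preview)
Your proposal is correct and takes essentially the same approach as the paper: both compute the first entry from $pe=\id_{\Mscr}$, and both identify the second entry by tracing the transfer $t$ through $U(1)$-equivariant elliptic cohomology and reducing to the non-equivariant fact that the Pontryagin--Thom class of $U(1)$ with its Lie framing is $\eta$. The only cosmetic difference is how the passage to the non-equivariant world is organized: the paper realizes $e^*$ as induced by the equivariant collapse $U(1)_+\to S^0$ and then uses the $\ind$--$\res$ adjunction (since both source and target are induced) to extract a non-equivariant map $S^1\to S^0$, whereas you invoke the identification $e^*\Oscr_{\Escr}^{(-)}\simeq\Oscr_{\Mscr}^{\res(-)}$ from \cite{GepnerMeier2} directly; unwinding your ``factors through'' step amounts to exactly the paper's collapse/adjunction move.
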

\begin{proof}
    The first entry follows since $pe\colon \MM\to \Escr\to \MM$ is the identity and thus so is $e^*p^*$. 

    To show that $\TMF[1] \to \Gamma(\Otop_{\Escr}) \xrightarrow{e^*} \TMF$ is multiplication by $\eta$, observe that the map is induced by the $U(1)$-equivariant map $U(1)_+[1] \to S^1 \xrightarrow{t[1]} U(1)_+$ after taking $U(1)$-equivariant elliptic cohomology. Equivalently, it is induced by the composite 
\[S^1 \to U(1)_+[1] \to S^1  \xrightarrow{t[1]} U(1)_+ \to S^0\]
in non-equivariant $\TMF$.\footnote{There are several ways to see it. One is using just the formal theory of adjunctions from the observation that $U(1)_+ \simeq \ind_{\{1\}}^{U(1)}\res_{\{1\}}^{U(1)}S^0$.} 
The composite of the first two arrows is the identity and the composite of the last two is $\eta$, as observed above. 
\end{proof}

\begin{prop}\label{prop:explicitduality}
    Identifying $L^{(0)}$ with $\TMF\oplus \TMF[1]$ and $L_{(0)}$ with $\TMF\oplus \TMF[-1]$ via \eqref{eq:equivalence}, the equivalence $L_{(0)}\simeq L^{(0)}[-1]$ is given by the matrix $\begin{pmatrix}0 & \pm 1\\
    \pm 1 & \eta\end{pmatrix}$.
\end{prop}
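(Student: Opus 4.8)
The plan is to realise the equivalence of \cref{thm:Lbduals} in the case $b=(0)$ as the adjoint of the Serre-duality trace pairing on the $\TMF$-algebra $L^{(0)} = \Gamma(\Otop_{\Escr})$, and then to evaluate that pairing on the basis of $\pi_*L^{(0)}$ coming from \eqref{eq:equivalence}. The proof of \cref{prop:duals} exhibits $(L^{(0)})^\vee \simeq L^{(0)}[-1]$ as an instance of spectral Grothendieck--Serre duality for $p\colon \Escr \to \Mscr$, with relative dualizing complex a shift of the structure sheaf; consequently this self-duality of the \emph{ring} $L^{(0)}$ is $\Otop_{\Escr}$-linear, so it is adjoint to a pairing $\mu(x,y) = \operatorname{tr}_p(x\cdot y)$ valued in $\TMF[1]$, where $\operatorname{tr}_p\colon L^{(0)} = \Gamma(p_*\Otop_{\Escr}) \to \TMF[1]$ is the global sections of the Grothendieck trace $p_*\Otop_{\Escr}\to\Otop_{\Mscr}[1]$ and $x\cdot y$ is the ring product. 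Under this identification the matrix of $\Phi_{(0)}$ with respect to \eqref{eq:equivalence} is the Gram matrix of $\mu$ on the basis $e_1 := p^*(1)\in\pi_0L^{(0)}$ (the ring unit) and $e_2 := \tr(1)\in\pi_1L^{(0)}$ (the degree-shifting transfer class).

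The entries then fall out as follows. Degree reasons give $\mu(e_1,e_1) = \operatorname{tr}_p(e_1)\in\pi_{-1}\TMF = 0$, which is the upper-left $0$. The off-diagonal entry $\mu(e_1,e_2) = \operatorname{tr}_p(\tr(1))$ lies in $\pi_0\TMF$ and is a unit, hence $\pm1$: this holds because $\operatorname{tr}_p$ and $\tr$ are Grothendieck--Serre dual to $p^*$ and $e^*$, so their composite is the identity up to sign --- alternatively it follows from the perfectness of $\mu$ --- and $\mu(e_2,e_1)$ equals $\mu(e_1,e_2)$ up to a Koszul sign, which is harmless. The key entry $\mu(e_2,e_2)\in\pi_1\TMF\cong\Z/2$ I would compute via the projection formula: the transfer $\tr\colon\TMF[1]\simeq\Gamma(e_*\Otop_{\Mscr})[1]\to L^{(0)}$ is a map of $L^{(0)}$-modules (this is the $\Otop_{\Escr}$-linearity of the equivariant transfer $\Oscr_{\Escr}^{t}$, equivalently of the Grothendieck transfer), where $\TMF[1]$ is an $L^{(0)}$-module by restriction of scalars along $e^*\colon L^{(0)}\to\TMF$. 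Therefore $\tr(1)^2 = \tr\bigl(e^*(\tr(1))\cdot 1\bigr) = \tr(\eta\cdot 1) = \eta\cdot\tr(1)$, where the middle step uses $e^*(\tr(1)) = \eta$ from \cref{lem:restriction} and the last uses $\TMF$-linearity of $\tr$; applying $\operatorname{tr}_p$ gives $\mu(e_2,e_2) = \eta\cdot\operatorname{tr}_p(\tr(1)) = \pm\eta = \eta$ since $2\eta = 0$. The Gram matrix is thus $\begin{pmatrix}0 & \pm1\\ \pm1 & \eta\end{pmatrix}$, and it is automatically an equivalence since its determinant is $\mp1$.

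The hard part will be the structural input in the first step: making rigorous that the equivalence of \cref{thm:Lbduals} is adjoint to the \emph{multiplicative} trace pairing, i.e.\ unwinding the spectral Grothendieck--Serre duality of \cite{SAG} as used in \cref{prop:duals} and identifying the counit/trace it produces with the honest Grothendieck trace of $\Otop_{\Escr}$; the related fact used in the off-diagonal computation --- that this trace pairs to a unit with the equivariant transfer class $\tr(1)$ --- should follow from the same analysis. Once these are granted, the appearance of $\eta$ is not a new computation: it is exactly the $\eta$ detected by \cref{lem:restriction}, now propagated through the ring structure of $L^{(0)}$ by the projection formula, which also explains the remark in \cref{sec:exampleL0} that the canonical duality equivalence is \emph{not} the obvious one but differs from it by precisely this $\eta$-term.
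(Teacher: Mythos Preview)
Your approach is genuinely different from the paper's, and both establish the three ``easy'' entries the same way (degree reasons for the upper-left $0$, invertibility for the off-diagonal $\pm1$). For the $\eta$ in the lower-right corner, the paper does \emph{not} use the ring structure of $L^{(0)}$ or any trace pairing. Instead it works with the inclusion $i\colon L^{(0)}\to\Gamma(\Otop_{\Escr}(e))$, identifies the lower-right entry with the composite
\[
\TMF \xrightarrow{i^{\vee}} L_{(0)} \simeq L^{(0)}[-1] \xrightarrow{i[-1]} \TMF[-1],
\]
and then, by naturality of the duality of \cref{prop:duals} applied to $\Otop_{\Escr}\to\Otop_{\Escr}(e)$, rewrites this as the map $\Gamma(\Otop_{\Escr}(-e))\to\Gamma(\Otop_{\Escr}(e))$ (shifted). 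Finally it extracts the $\eta$ from the split cofiber sequence coming from $U(1)_+\to S^0\to S^{\C}$: the map $\iota\colon\Gamma(\Otop_{\Escr}(-e))\to\Gamma(\Otop_{\Escr})$ has matrix $\begin{pmatrix}\eta\\\pm1\end{pmatrix}$ because $e^*\circ\iota=0$ and $e^*=(1\ \eta)$ by \cref{lem:restriction}. So the paper's argument stays entirely at the level of explicit maps between known $\TMF$-modules and never needs to identify the duality equivalence as the adjoint of a multiplicative pairing.

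Your route via $\mu(x,y)=\operatorname{tr}_p(xy)$ and the projection formula $\tr(1)^2=\tr(e^*\tr(1))=\eta\cdot\tr(1)$ is more conceptual and explains the $\eta$ as a Frobenius-type phenomenon in the $L^{(0)}$-algebra. The trade-off is exactly what you flag: you must verify (i) that the equivalence produced by \cref{prop:duals} really is adjoint to the Grothendieck trace pairing on $p_*\Otop_{\Escr}$, and (ii) that the equivariant-transfer class $\tr(1)$ of \eqref{eq:equivalence} satisfies the projection formula as an $L^{(0)}$-module map after passing through $\Oscr_{\Escr}^{(-)}$. Neither is unreasonable, but neither is supplied by the paper; the paper's more hands-on argument sidesteps both by only ever using the \emph{naturality} of the duality (for the map $\Otop_{\Escr}\to\Otop_{\Escr}(e)$), which is already part of the package of \cref{prop:duals}. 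If you can pin down (i) and (ii), your proof is shorter and more structural; absent that, the paper's route is self-contained with the inputs already on the table.
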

\begin{proof}The upper left entry in the matrix must be $0$ as 
$$\Hom_{\Ho(\Mod_{\TMF})}(\TMF[-1], \TMF])=\pi_{-1}\TMF = 0.$$ 
As the determinant of the matrix must be a unit in $\pi_0\TMF = \Z[j]$, i.e.\ $\pm 1$, the antidiagonal entries must indeed be $\pm 1$. It remains to show that the lower right entry is $\eta$. 

    It is shown in the proof of \cite[Theorem 10.1]{GepnerMeier} that the composition 
    $$\TMF \xrightarrow{p^*} L^{(0)}=\Gamma(\Otop_{\Escr}) \xrightarrow{i} \Gamma(\Otop_{\Escr}(e))$$
    is an equivalence, while $\TMF[1] \xrightarrow{\mathrm{tr}} L^{(0)}=\Gamma(\Otop_{\Escr}) \xrightarrow{i} \Gamma(\Otop_{\Escr}(e))$ is zero. Thus, $i$ corresponds to the matrix $(\pm 1 \, 0)\colon \TMF\oplus \TMF[1] \to \TMF$. Consequently,
    the lower right entry in the matrix corresponds to the composite 
    \[\TMF = \TMF^{\vee} \xrightarrow{i^{\vee}} (L^{(0)})^{\vee} = L_{(0)} \simeq L^{(0)}[-1] \xrightarrow{i[-1]}\TMF[-1].\]

    Using the naturality of \cref{prop:duals} for $\Otop_{\Escr}\to \Otop_{\Escr}(e)$, we obtain a commutative square
        \[
\xymatrix{
\TMF \simeq L_{(-1)} \simeq \Gamma(\Otop_{\Escr}(e))^{\vee}\ar[r]^-{\simeq} \ar[d]^{i^{\vee}}& \Gamma(\Otop_{\Escr}(-e))[-1] \simeq L^{(-1)}[-1]\ar[d]\\
L_0 \simeq \Gamma(\Otop_{\Escr})^{\vee} \ar[r]^-{\simeq} & \Gamma(\Otop_{\Escr})[-1] \simeq L^0[-1]\ar[d]^i 
\\
&\Gamma(\Otop_{\Escr}(e))[-1] \simeq L^{(-1)}[-1]\simeq \TMF[-1]
        }\]
Thus, it suffices to see that the composite 
\[\TMF\simeq \Gamma(\Otop_{\Escr}(-e))[-1] \to \Gamma(\Otop_{\Escr}(e))[-1] \simeq \TMF[-1].\]
is multiplication by $\eta$. For this, in turn, it suffices to show that the matrix $\begin{pmatrix}a\\b\end{pmatrix}$ describing
\[\iota\colon \TMF[1]\simeq \Gamma(\Otop_{\Escr}(-e)) \to \Gamma(\Otop_{\Escr})\simeq \TMF\oplus\TMF[1]\]
equals $\begin{pmatrix}\eta\\ ?\end{pmatrix}$. 

To establish this, apply $U(1)$-equivariant elliptic cohomology to the cofiber sequence $U(1)_+ \to S^0 \to S^{\C}$, where the composite is (as in every cofiber sequence) nullhomotopic. We obtain a (co)fiber sequence
\[e_*\Otop \leftarrow \Otop_{\Escr} \leftarrow \Otop_{\Escr}(-e)\simeq \TMF[1],\]
using \cite[Lemma 9.1]{GepnerMeier}, and on global sections a (co)fiber sequence
\begin{equation}\label{eq:splitcofiber}\TMF \xleftarrow{e^*} \Gamma(\Otop_{\Escr})\simeq \TMF \oplus \TMF[1] \xleftarrow{\iota^*} \Gamma(\Otop_{\Escr}(-e)).\end{equation}
In particular, the composite is zero. By \cref{lem:restriction}, $e^*$ is given by $(1\, \eta)$. Thus, 
\begin{equation}\label{eq:ab}a + \eta\cdot b  =0.\end{equation}

Moreover, $e^*$ is split by $p^*$ and thus \eqref{eq:splitcofiber} is split. This means that
\[\TMF\oplus \Gamma(\Otop_{\Escr}(-e)) \simeq \TMF \oplus \TMF[1] \xrightarrow{\begin{pmatrix}
    p^*& \iota
\end{pmatrix} = \begin{pmatrix}1 &a\\0& b\end{pmatrix}} \TMF\oplus \TMF[1]\]
is an equivalence. Hence, $b=\pm 1$. Thus, \eqref{eq:ab} implies $a= \eta$ as claimed. 
\end{proof}

\section{Invariants of 3- and 4-manifolds} \label{sec: invariants of manifolds}
In the preceding sections, we have seen how to associate $\TMF$-modules to bilinear forms. In this section, we will use this to define invariants of $3$- and $4$-dimensional manifolds. To every closed $3$-manifold we associate a $\TMF$-module and to every simply-connected closed $4$-manifold a class in $\pi_*\TMF$. We will make a proposal how these invariants fit into a TQFT-framework. 

\emph{\textbf{Convention: }All manifolds are assumed to be oriented in this section and all diffeomorphisms are orientation-preserving.}

Moreover, $\Mod_{\TMF}$ will always denote the \emph{homotopy category} of $\TMF$-modules and $\gMod_{\TMF}$ will always denote the \emph{graded homotopy category} of $\TMF$-modules, i.e.\ $\Hom_{\gMod_{\TMF}}(M,N) = \bigoplus_{n\in \Z} [M[n], N]_{\TMF}$. 

\subsection{Invariants of $3$-manifolds}
Recall first that every connected closed $3$-manifold is the boundary of a simply-connected $4$-manifold. 
\begin{construction}\label{constr:3manifold}
            Let $M$ be a connected closed $3$-manifold. Consider a simply-connected oriented $4$-manifold $W$ bounding $M$. Let $b =b(W)$ be the intersection form of $W$, and $b^+$ and $b^-$ the number of positive and, respectively, negative eigenvalues. Then we set $\Zscr(M, W) := L_b[3b^+-2b^-]$.\footnote{We can write $H_2(W; \R)$ as $B^+ \oplus B^- \oplus B^0$, where the intersection form is positive definite on $B^+$, negative definite on $B^-$ and zero on $B^0$. For better functoriality, we might abstain from choosing isomorphisms $B^+\cong \R^{b^+}$ and $B^- \cong \R^{b^-}$ and rather define $\Zscr(M, W) = L_B \otimes S^{3B^+} \otimes S^{-2B^-}$, where $S^{V}$ denotes the one-point compactification of a vector space $V$.} 
        \end{construction}

        \begin{prop}\label{prop:welldefined}
            The $\TMF$-module $\Zscr(M,W)$ in the preceding construction depends up to equivalence only on $M$. 
        \end{prop}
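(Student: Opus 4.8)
The strategy is to show that any two simply-connected $4$-manifolds $W_0, W_1$ bounding $M$ yield equivalent $\TMF$-modules $\Zscr(M, W_i) = L_{b(W_i)}[3b^+(W_i) - 2b^-(W_i)]$. The natural tool is cobordism theory: glue $W_0$ to $-W_1$ along $M$ to obtain a closed oriented $4$-manifold $X = W_0 \cup_M (-W_1)$. Since both $W_i$ are simply-connected, van Kampen shows $X$ is simply-connected as well (the common boundary $M$ is connected). By Wall's theorem (already invoked in the introduction), $X$ becomes diffeomorphic to a connected sum of copies of $S^2 \times S^2$, $\CP^2$, and $\overline{\CP^2}$ after stabilizing with sufficiently many copies of $S^2 \times S^2$. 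More usefully at the level of handle decompositions: the key classical fact is that $W_0$ and $W_1$, viewed as $4$-manifolds with the \emph{same} boundary $M$, become related by handle slides and handle creation/annihilation (births/deaths of canceling $1$/$2$- and $2$/$3$-handle pairs) — this is the relative version of Kirby calculus. Since both are simply-connected, one can arrange handle decompositions with only $2$-handles (after trading $1$-handles), so the intersection forms $b(W_0)$ and $b(W_1)$ are related by the moves of \cref{lem:stable} (stable isomorphism: adding hyperbolic or $\langle \pm 1\rangle$ summands) and handle slides (which act by the functoriality/change-of-basis in \cref{lem:RestrictionTransferDirectSum}, i.e.\ $A^*b$ for $A \in GL_d(\Z)$).

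Concretely, I would proceed as follows. First, reduce to the case where $W_0$ and $W_1$ have handle decompositions relative to a collar of $M$ using only $0$- and $2$-handles; this is standard since $\pi_1 = 1$ allows cancellation of $1$-handles against $2$-handles (Wall). Second, observe that the invariant $L_b[3b^+ - 2b^-]$ is unchanged under isomorphism of $b$ (immediate from functoriality of $\Lscr$ under $GL_d(\Z)$, since $b^{\pm}$ are isomorphism invariants). Third, \cref{lem:KirbyMove} gives $L_b[-3] \simeq L_{b \oplus (1)}$ and $L_b[2] \simeq L_{b \oplus (-1)}$; tracking the degree shifts, $L_{b \oplus (1)}[3(b^+ + 1) - 2b^-] = L_b[-3][3b^+ + 3 - 2b^-] = L_b[3b^+ - 2b^-]$, and similarly for $(-1)$, so adding a $(\pm 1)$-summand — which corresponds to connect-summing $W$ with $\CP^2$ or $\overline{\CP^2}$, i.e.\ a blow-up, and in particular to certain births of canceling handle pairs at the boundary — leaves $\Zscr(M, W)$ invariant. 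Fourth, the remaining moves relating the two bounding manifolds (handle slides, and the hyperbolic stabilization $b \rightsquigarrow b \oplus h$ coming from $S^2 \times S^2$-summands, which by $h \oplus (1) \cong (1) \oplus (1) \oplus (-1)$ reduces to the previous case) all preserve the pair $(L_b,\, 3b^+ - 2b^-)$ up to the canonical equivalences. Assembling these, $\Zscr(M, W_0) \simeq \Zscr(M, W_1)$.

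The main obstacle is the input from $4$-manifold topology: making precise that any two simply-connected bounding manifolds $W_0, W_1$ for $M$ are connected by a sequence of the specific moves listed above (handle slides, blow-ups/blow-downs, and hyperbolic stabilizations), with control on the resulting bookkeeping of $b^+$ and $b^-$. This is essentially the content of the stable diffeomorphism / Kirby-calculus results of Wall and Gompf cited in the introduction, applied to the closed manifold $W_0 \cup_M (-W_1)$ — or more directly, one uses that $W_0$ and $W_1$ are \emph{stably diffeomorphic rel $\partial$} after adding $S^2 \times S^2$'s, which at the level of intersection forms says $b(W_0) \oplus (\text{hyperbolics}) \cong b(W_1) \oplus (\text{hyperbolics})$, i.e.\ they are stably isomorphic in the sense of \cref{lem:stable}; then \cref{lem:stable} together with the degree-shift computation above finishes it. The only subtlety is that $L_b$ depends a priori on more than the stable isomorphism class — but \cref{lem:stable} asserts exactly that $\Lscr_b \simeq \Lscr_{b'}$ for stably isomorphic $b, b'$, and one checks the shift $3b^+ - 2b^-$ is likewise a stable-isomorphism invariant after accounting for the hyperbolic summands (each hyperbolic contributes $+1$ to both $b^+$ and $b^-$, changing the shift by $3 - 2 = 1$ while \cref{lem:KirbyMove} applied twice, via $h \cong (1)\oplus(-1)$ stably, compensates exactly). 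I would isolate this topological input as the one nontrivial external ingredient and present the rest as a direct consequence of \cref{lem:stable}, \cref{lem:KirbyMove}, and functoriality.
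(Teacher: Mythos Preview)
Your route is genuinely different from the paper's and has a gap. The paper argues purely algebraically: the intersection form of any simply-connected $W$ with $\partial W = M$ presents the linking pairing on $\tau H_1(M)$ via the exact sequence $H_2(W)\to H_2(W,M)\cong H_2(W)^*\to H_1(M)\to 0$, and \cref{lem: pm equivalence} in the appendix (resting on Nikulin's theorem on lattices together with the classification of indefinite unimodular forms) shows that any two such presentations are $\pm$-equivalent, i.e.\ become isomorphic after adding diagonal $\langle 1\rangle$'s and $\langle -1\rangle$'s on each side. Then \cref{lem:KirbyMove} finishes exactly as in your step three. No $4$-manifold topology beyond the exact sequence of the pair is used.

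Your topological route stumbles at the assertion that ``$W_0$ and $W_1$ are stably diffeomorphic rel $\partial$ after adding $S^2\times S^2$'s.'' This is false as stated: $S^2\times S^2$-stabilization preserves the signature, but two simply-connected fillings of the same $M$ need not have equal signature (take $M=S^3$ with $W_0=D^4$ and $W_1=\CP^2\setminus\mathrm{int}\,D^4$). So hyperbolic stabilization alone cannot relate them; one genuinely needs $\langle\pm 1\rangle$-stabilization, and showing that \emph{this} suffices is exactly the statement that the two intersection forms are $\pm$-equivalent. For $2$-handlebodies that does follow from Kirby's theorem (handle slides plus blow-ups/downs), but your reduction of a general simply-connected $W$ to a $2$-handlebody requires eliminating $3$-handles, which is not the routine consequence of $\pi_1=1$ you suggest --- trading $1$-handles for $2$-handles is standard, but simultaneously eliminating $3$-handles is subtle. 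The paper's algebraic argument via the linking form is both shorter and handles arbitrary simply-connected fillings uniformly, without needing any handle decomposition at all.
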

        \begin{proof}
            As explained in \cref{sec: data}, $H_1(M)$ together with the linking form 
            \[\tau H_1(M)\times \tau H_1(M) \to \Q/\Z\] on its torsion subgroup, determine the intersection form $b$ of $W$ up to a $\pm$-equivalence. This means that  if there is another bounding simply-connected $4$-manifold $W'$ with intersection form $b'$, then 
            \[b\oplus \langle 1\rangle^r \oplus \langle -1\rangle^s \cong b'\oplus \langle 1\rangle^{r'} \oplus \langle -1\rangle^{s'} \]
            for suitable $r,r', s, s'\geq 0$. By \cref{lem:KirbyMove}, 
            \begin{align*}\Zscr(M, W) &= L_b[3b^+-2b^-] \simeq L_{b\oplus \langle 1\rangle^r \oplus \langle -1\rangle^s}[3(b^+ +r) -2(b^-+s)] \\&\simeq L_{b'\oplus \langle 1\rangle^{r'} \oplus \langle -1\rangle^{s'}}[3((b')^+ +r') - 2((b')^- + s')] 
            \\&\simeq L_{b'}[3(b')^+-2(b') ^-] = \Zscr(M, W').\qedhere\end{align*}
        \end{proof}
        For this reason, we will often write $\Zscr(M)$ for $\Zscr(M, W)$. If $M$ is disconnected, we set $\Zscr(M)$ to be the tensor product of the $\TMF$-modules associated to its components. 

\begin{prop}\label{prop:Zduality}
    Let $M$ be a closed $3$-manifold and $\overline{M}$ be the same $3$-manifold with the opposite orientation. Then $\Zscr(\overline{M}) \simeq \Zscr(M)^{\vee}[-b_1(M)]$, where $()^{\vee}$ denotes the $\TMF$-linear dual. 
\end{prop}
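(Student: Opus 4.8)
The plan is to compute both sides from a single bounding $4$-manifold and to reduce the comparison of degree shifts to the topological identity $b^0 = b_1(M)$, where $b^0$ is the nullity of the intersection form of such a manifold. First I would reduce to the case that $M$ is connected: for disconnected $M$ both $\Zscr(\overline M)$ and $\Zscr(M)^\vee$ are $\otimes_{\TMF}$-products over the components (the relevant $\TMF$-modules being perfect, hence dualizable, so $(-)^\vee$ distributes over $\otimes_{\TMF}$), and $b_1$ is additive over components, so the connected case suffices.

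Assume $M$ connected and pick a simply-connected oriented $4$-manifold $W$ with $\partial W = M$. Let $b = b(W)$ be its intersection form on $H_2(W;\Z) \cong \Z^d$ with $d = b_2(W)$, and let $b^{+}, b^{-}, b^{0}$ denote the numbers of positive, negative and zero eigenvalues, so $d = b^{+}+b^{-}+b^{0}$. Then $\overline W$ is again simply-connected, has $\partial \overline W = \overline M$, and has intersection form $-b$ with $(-b)^{\pm} = b^{\mp}$; hence, by \cref{constr:3manifold} and \cref{prop:welldefined},
\[
\Zscr(\overline M) \simeq \Zscr(\overline M, \overline W) = L_{-b}[3b^{-} - 2b^{+}].
\]
Meanwhile $\Zscr(M) = L_b[3b^{+}-2b^{-}]$, so $\Zscr(M)^\vee \simeq L_b^{\vee}[2b^{-}-3b^{+}]$. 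Combining the two equivalences supplied by \cref{thm:Lbduals}, $L^b \simeq L_b[d]$ and $L^b \simeq L_{-b}^{\vee}$, and using that $L_{-b}$ is a dualizable $\TMF$-module (being the image under the symmetric monoidal equivalence $\QCoh(\Mscr,\Otop_\Mscr) \simeq \Mod_{\TMF}$ of the pushforward of the line bundle $\Lscr_b$ along the proper morphism $\Escr^{\times_{\Mscr}d} \to \Mscr$, which is perfect), I get $L_b^{\vee} \simeq L_{-b}[d]$, so that $\Zscr(M)^\vee \simeq L_{-b}[d + 2b^{-} - 3b^{+}]$. Comparing shifts, $(3b^{-}-2b^{+}) - (d + 2b^{-}-3b^{+}) = b^{+}+b^{-}-d = -b^{0}$, and therefore $\Zscr(\overline M) \simeq \Zscr(M)^\vee[-b^{0}]$.

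It remains to show $b^{0} = b_1(M)$, which I expect to be the only substantive (though still routine) step. I would run the rational long exact sequence of the pair $(W, M)$,
\[
H_2(W;\Q) \xrightarrow{\ j\ } H_2(W, M;\Q) \longrightarrow H_1(M;\Q) \longrightarrow H_1(W;\Q) = 0,
\]
and combine it with Lefschetz duality $H_2(W,M;\Q) \cong H^2(W;\Q) \cong H_2(W;\Q)^{*}$, under which $j$ becomes the adjoint of the symmetric form $b \otimes \Q$. Then $\mathrm{rank}(j) = b^{+}+b^{-}$, so $\mathrm{coker}(j)$ has dimension $d-(b^{+}+b^{-}) = b^{0}$; but by exactness $\mathrm{coker}(j) \cong H_1(M;\Q)$, of dimension $b_1(M)$, so $b^{0} = b_1(M)$ and the proposition follows. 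Everything else is formal bookkeeping with the shifts from \cref{constr:3manifold} and \cref{thm:Lbduals}; the only thing requiring care is tracking the signs of those shifts, since the answer hinges precisely on the extra factor of $b^{0}$.
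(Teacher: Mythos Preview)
Your proof is correct and follows essentially the same route as the paper: choose a simply-connected $W$ with $\partial W=M$, use $\overline W$ to compute $\Zscr(\overline M)$, invoke \cref{thm:Lbduals} to relate $L_{-b}$ and $L_b^{\vee}$ up to a shift by $d=b^{+}+b^{-}+b^{0}$, and then identify $b^{0}$ with $b_1(M)$. The paper's computation is the same chain of shift manipulations, and for $b^{0}=b_1(M)$ it simply points to the appendix, whereas you spell out the rational long exact sequence argument directly; your added reduction to the connected case and the remark on dualizability of $L_{-b}$ are helpful details that the paper leaves implicit.
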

\begin{proof}
    Let $W$ be a simply-connected $4$-manifold bounding $M$ with intersection form $b$. Let $b^+$, $b^0$ and $b^-$ the number of positive, zero and negative eigenvalues, respectively. From the relation of $b$ with $H_1(M)$ explained in \cref{sec: homology}, we see that $b^0 = b_1(M)$. The $3$-manifold $\overline{M}$ is bounded by $\overline{W}$ (i.e.\ $W$ with the opposite orientation), whose intersection  form is $-b$. 

    Using \cref{thm:Lbduals}, we compute 
    \begin{align*}
        \Zscr(\overline{M}) &= L_{-b}[3b^- - 2b^+] \\
        &\simeq (L_b)^{\vee}[-b^+-b^0-b^-][3b^- - 2b^+]\\
        &\simeq (L_b)^{\vee}[-(3b^+-2b^-)][-b^0]\\
        &\simeq \Zscr(M)^{\vee}[-b_1(M)]. \hfill\qedhere
    \end{align*}
\end{proof}

\subsection{Functoriality of the $3$-manifold invariants}
        We stress a potential pitfall when defining $\Zscr(M)$ as in the last section: $\Zscr(M)$ is well-defined up to equivalence, but we have (a priori) not defined it up to \emph{canonical} equivalence. Since ``canonical'' is not a mathematical term, we want to rephrase the problem in terms of defining $\Zscr$ as a functor. (When defining a potential TQFT, we phrase our problem in terms of cobordism categories later.) 

        \begin{question}\label{q:functorialwelldefinedness1}
            Does $\Zscr$ refine to a functor $\mathrm{Mfld}_3\to \Mod_{\TMF}$ to the homotopy category of $\TMF$-modules? Here, $\mathrm{Mfld}_3$ is the category of closed connected $3$-manifolds and (isotopy classes of) diffeomorphisms between them.
        \end{question}
        
        To understand the relation between this question and the construction in the last section better, we need to introduce the following categories: 
        \begin{itemize}
            \item Let $\mathrm{Kirby}$ be the following category: 
            Its objects are framed links, and its morphisms are freely generated by \emph{Kirby moves}. These moves are of two types: one move replaces a link component by a framed version of a band sum with another component, and the second type introduces or removes a $\pm 1$ framed unknot which is split from the rest of the link; see \cite{GS} for a detailed discussion. A framed link determines a $4$-manifold obtained by attaching $2$-handles to $D^4$ along the link, so we consider an object in Kirby as a 4-manifold with a particular presentation as a 2-handlebody with the given attaching framed link. Note that Kirby moves (generating morphisms in the category) define diffeomorphisms of boundary $3$-manifolds.
            \item Let $\Bil$ be the category whose objects are finitely generated free abelian groups $\Lambda$ with a symmetric bilinear form $\Lambda\otimes \Lambda\to \Z$. Its morphisms are homomorphisms of abelian groups that are compatible with the bilinear form. 
            \item Let $\Bil_{\pm}$ the category with the same objects as $\Bil$, but whose morphisms are freely generated by those of $\Bil$ and isomorphisms $b\to b\oplus \langle 1\rangle$ and $b\to b\oplus \langle -1\rangle$.\footnote{Thus morphisms are sequences of morphisms in $\Bil$ and these extra isomorphisms and their inverses. One could (and probably should) put some reasonable relations on these, but this will not be relevant for us.}  
            \item $\mathrm{TorsionForms}$ is the category whose objects are finitely generated abelian groups $A$ together with a symmetric  bilinear form on the torsion subgroup\[\tau(A)\tensor \tau(A) \to \Q/\Z.\]
            Morphisms are morphisms of abelian groups that are compatible with the pairing. 
        \end{itemize}
We have a commutative diagram 
\begin{equation}\label{eq:BilDiagram}
\xymatrix{
&\mathrm{Kirby}\ar[d]_{\text{Intersection Form}}\ar[r]^{\partial}  &\mathrm{Mfld}_3 \ar[d]^{\text{Linking Form}} \\
\Bil\ar[r]\ar[dr]^{L'}&\Bil_{\pm} \ar[r]\ar@{-->}[d]^{L'} & \mathrm{TorsionForms}\ar@{-->}[dl]^?\\
&\Mod_{\TMF}&
}
\end{equation}
The commutativity of the square is essentially explained in \cref{sec: data}. More precisely, we have a functor $\Bil \to \mathrm{TorsionForms}$, sending a finitely generated free abelian group $A$ with a symmetric bilinear pairing $A\tensor A \to \Z$ to the cokernel of the adjoint map $A \to A^*$ with the induced torsion pairing. Adding $\langle 1 \rangle$ or $\langle -1\rangle$ to $A$ does not change the cokernel and thus the functor $\Bil \to \mathrm{TorsionForms}$ factors over $\Bil_{\pm 1}$. 

The diagonal arrow $L'$ sends a bilinear form $b$ to its associated $\TMF$-module $L_b[3b^+ - 2b^-]$, where $b^+$ and $b^-$ are the number of positive and negative eigenvalues.  By \cref{lem:KirbyMove}, this factors over $\Bil_{\pm}$. 

The more precise version of \cref{q:functorialwelldefinedness1} becomes the following: 
\begin{question}\label{q:functorialwelldefinedness2}
    Does the functor $\Zscr\colon \mathrm{Kirby} \to \Mod_{\TMF}$ factor over $\mathrm{Mfld}_3$? 
\end{question}
Recall that all 3-manifolds are assumed to be oriented, and all diffeomorphisms are orientation-preserving in this section. 
By the Lickorish--Wallace theorem, every closed connected $3$-manifold can (up to diffeomorphism) be presented as the boundary of a $4$-ball with $2$-handles attached, so  $\partial$ is essentially surjective. Moreover, every diffeomorphism of $3$-manifolds arises from Kirby moves \cite[Theorem 5.3.6]{GS} and thus the functor is also full. Thus, to factor $\Zscr$ through $\mathrm{Mfld}_3$, we do not need extra data but just to establish a property: we need to show that any sequence of Kirby moves defining (up to isotopy) the identity diffeomorphism on the boundary, is mapped by $\Zscr$ (up to homotopy) to the identity morphism of the associated $\TMF$-module. 

It would also suffice for a positive resolution of \cref{q:functorialwelldefinedness2} to work on the more algebraic level, namely to show that $L'\colon \Bil_{\pm} \to \Mod_{\TMF}$ factors through $\Bil_{\pm}\to \mathrm{TorsionForms}$.

\subsection{$4$-manifold invariants and a potential TQFT}
Our goal in this subsection is to define invariants of (simply-connected) closed $4$-manifolds in $\pi_*\TMF$. We moreover give a partial definition of a $\TMF$-valued $(3+1)$-dimensional topological quantum field theory that combines these $4$-manifolds invariants and the $3$-manifold invariants defined above.  

Since \cref{q:functorialwelldefinedness1} and \cref{q:functorialwelldefinedness2} remain unresolved for now, we will at first base our partial definition of a TQFT on a bordism category whose objects are pairs of a $3$-manifold and a bounding $4$-manifold. More precisely, consider a category $\widetilde{\Cob}^{\mathrm{con}}$ whose objects are pairs $(M, V)$ where $M$ is a connected closed $3$-manifold and $V$ is a simply-connected $4$-manifold with $\partial V=M$. Given two objects $(M_0, V_0)$ and $(M_1, V_1)$, a morphism between them is a cobordism $W$ from $M_0$ to $M_1$ such that $\pi_1(W, M_0)$ is trivial and $V_1 \cong V_0 \cup_{M_0} W$. The diffeomorphism between $V_1$ and $V_0 \cup_{M_0} W$ is considered to be part of the data of the morphism.
An example of morphisms satisfying the relative $\pi_1$ condition is given by 4-manifold cobordisms admitting a relative handle decomposition with all handles of index $2$.

Given an object $(M,V)$ in $\widetilde{\Cob}^{\mathrm{con}}$, define $\Zscr(M) = \Zscr(M,V)$ as in \cref{constr:3manifold}, using the specific intersection form $b(V)$. We are in a position to define maps of TMF-modules associated with 4-manifold cobordisms in this context.

        \begin{construction}\label{constr:cobordism}
           Let $W$ be a morphism between objects  $(M_0, V_0)$ and $(M_1, V_1)$ of $\widetilde{\Cob}^{\mathrm{con}}$.
            We define $\Zscr(W)\colon \Zscr(M_0)[d] \to \Zscr(M_1)$ 
            to be a shift of the transfer \[ L_{b(V_0)} \to L_{b(V_1)}\] induced by the homomorphism $H_2(V_0) \to H_2(V_1)$, which  in turn is induced by the map $V_0\hookrightarrow V_0\cup_{M_0} W \cong V_1$. Here 
            \begin{equation} \label{eq:degree}d = 3(b^+(V_1)-b^+(V_0)) - 2(b^-(V_1)-b^-(V_0)).
            \end{equation}
        \end{construction}

        This construction gives rise to a functor 
        \[ \widetilde{\Zscr}\colon\widetilde{\Cob}^{\mathrm{con}} \to \Mod_{\TMF},\]
to the category of TMF-modules. This gives rise to invariants of closed $4$-manifolds as follows: 

\begin{example}[Invariant of closed 4-manifolds]\label{closed 4manifolds}  Let $X$ be a closed simply-connected $4$-manifold.  Removing two $4$-balls from $X$, one has a cobordism $W$ with $M_0\cong M_1\cong S^3$. In the notation of \cref{constr:cobordism}, take $V_0$ to be a $4$-ball; then $V_1$ is $X$ with a $4$-ball removed. 

        Since $M_0\cong M_1\cong S^3$ is bounded by $D^4$ with trivial intersection form, $\Zscr(M_0)\simeq \Zscr(M_1)\simeq \TMF$. These identifications are well-defined up to a unit in $\pi_0\TMF\cong \Z[j]$, i.e.\ up to a sign. 
        Thus, the map associated with the cobordism is isomorphic to $\Zscr(W)\colon \TMF[d]\to\TMF$. Here the degree shift is given by equation \eqref{eq:degree}, $d=3b^+(X) - 2b^-(X)$. This map gives an element of $\pi_d\TMF$ (well-defined up to a sign), which is an invariant of $X$ mentioned in the introduction. Equivalently, one can construct the invariant as $\Zscr(X)$ when $X$ is viewed as a cobordism from $\varnothing \to \varnothing$. Moreover, the construction implies that it only depends on the intersection form $b(X)$ and equals $\mathfrak{d}_{-b(X)}$ (see \cref{constr:dhedafunction}). 

        If $X$ has only $2$-handles, the functor $\widetilde{\Zscr}$ constructed above can be used to show that $\Zscr(W)$ factors as a composition of maps corresponding to individual $2$-handles. 
        \end{example}

        \begin{remark}
            Let $X$ and $Y$ be simply-connected closed $4$-manifolds. The intersection form of the connected sum $X\# Y$ is the direct sum of the intersection forms of $X$ and $Y$. Thus, \cref{lem:RestrictionTransferDirectSum} implies that $\Zscr(X\# Y) = \Zscr(X)\cdot \Zscr(Y)\in \pi_*\TMF$ (up to sign). 
        \end{remark}

        Let $\Cob^{\mathrm{con}}$ be the category whose objects are connected closed $3$-manifolds and whose morphisms are cobordisms with vanishing relative fundamental groups for the inclusion of incoming and outgoing boundary, respectively. This comes with an obvious functor $\widetilde{\Cob}^{\mathrm{con}}\to \Cob^{\mathrm{con}}$, forgetting the bounding four-manifold. 

        \begin{question}\label{q:Zfactoring}
            Does $\widetilde{\Zscr}$ factor through a functor $\Zscr\colon \Cob^{\mathrm{con}} \to \Mod_{\TMF}$?
        \end{question}

        Under the hood, our $\widetilde{\Zscr}$ can be written as the left vertical composition in the following diagram
        \[
\xymatrix{\widetilde{\Cob}^{\mathrm{con}} \ar[r]\ar[d]_I & \Cob^{\mathrm{con}} \ar[d]^? \\ 
\Bil  \ar[r]^-? \ar[d]_{L'} &\text{Algebraic Cobordism Category?} \ar@{-->}[dl]^?  \\ 
\Mod_{\TMF},
}
        \]
        where $I(M, V)$ is the intersection form of $V$ and $L'(b) = L_b[3b^+-2b^-]$ as before. Ideally, one would answer \cref{q:Zfactoring} by writing down an ``algebraic cobordism category'', playing the role of torsion forms in \eqref{eq:BilDiagram} and receiving functors from $\Bil$ and $\Cob^{\mathrm{con}}$. The main point would be to show that $L'$ factors through this algebraic cobordism category. In \cref{sec: cobordism}, we collect some ingredients, which might allow us to define such an algebraic cobordism category. But we do not pursue this here any further and also not the relationship to the algebraic cobordism categories by Ranicki \cite{Ranicki} and \cite{9authors}.

Construction \ref{constr:cobordism} associates maps of TMF-modules with $4$-dimensional cobordisms given by $2$-handles. To define an actual topological quantum field theory $\Zscr\colon \Cob \to \Mod_{\TMF}$, we need (beside a positive answer to \cref{q:Zfactoring}) to extend this to $1$-handles and $3$-handles. We propose a partial construction. 
 
Let us first consider a $1$-handle attached to $S^3\times I$, that is a cobordism from $S^3$ to $S^2\times S^1$.
Using the bounding $4$-manifolds $D^4$, $S^2\times D^2$ with intersections forms $(\; ),  (0)$ respectively, the associated modules are seen to be $\TMF$ and  $(\TMF \oplus \TMF[1])^{\vee}\simeq \TMF \oplus \TMF[-1]$; see \cref{ex:S2xS2}.  We propose that the necessary map $\TMF[d]\to  \TMF \oplus \TMF[-1]$ corresponding to the $1$-handle cobordism has $d=-1$ and is the inclusion as the second summand. Let us generalize this construction to arbitrary $1$-handles:

\begin{construction}\label{otherhandles}
    Let $M$ be a connected closed $3$-manifold, and consider the $4$-dimensional cobordism $W=M\times I\cup 1$-handle. The two boundary components of $W$ are $M_0=M$ and $M_1\cong M\# (S^2\times S^1)$.  It follows from \cref{lem: direct sum} that 
\[\Zscr(M_1)\simeq \Zscr(M)\tensor_{\TMF} (\TMF \oplus \TMF[-1])\simeq \Zscr(M) \oplus \Zscr(M)[-1].
\]
As above, we define the map $\Zscr(W)\colon \Zscr(M)[-1]\to \Zscr(M_1)$ to be the inclusion 
as the second summand. Using calculations as in \cref{ex:S2xS2}, one expects that the map corresponding to first attaching a $1$-handle to $M\times I$ and then canceling it with a $2$-handle attached to $(M\# (S^2\times S^1))\times I$ is the identity on $\Zscr(M)$. This is a particular instance of functoriality of our theory.
\end{construction}

We conclude by stating a conjecture, motivated by physics considerations and by the functoriality results in restricted settings established in this section.

\begin{conj} \label{conj: TQFT}
    The invariants $\Zscr$ defined or conjectured above give rise to a $(3+1)$-dimensional TQFT, valued in modules over the ring spectrum $\TMF$.
\end{conj}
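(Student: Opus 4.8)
A $(3{+}1)$-dimensional TQFT valued in $\Mod_{\TMF}$ is, minimally, a symmetric monoidal functor $\Zscr\colon \Cob_{3,4}\to \Ho(\Mod_{\TMF})$ out of the bordism category of closed oriented $3$-manifolds and $4$-dimensional cobordisms; ideally one wants this refined to the $\infty$-category $\Mod_{\TMF}$. The plan is to factor $\Zscr$ through an \emph{algebraic} cobordism category $\mathcal{A}$, built so that the two halves of the factorization isolate the two difficulties already visible in \cref{sec: invariants of manifolds}: the topological reduction of $4$-cobordisms to algebra, and the coherent construction of the $\TMF$-module functor. Concretely, I would construct a symmetric monoidal category $\mathcal{A}$ whose objects are finitely generated abelian groups equipped with a $\Q/\Z$-valued linking form on their torsion subgroup --- a chain-level or Poincaré enhancement of $\mathrm{TorsionForms}$, modelled on the algebraic bordism categories of Ranicki \cite{Ranicki} and the Poincaré-category formalism of \cite{9authors} --- together with functors
\[
\Cob_{3,4}\xrightarrow{\ \text{linking forms, algebraic }4\text{-cobordisms}\ }\mathcal{A}\xrightarrow{\ L\ }\Ho(\Mod_{\TMF}),
\]
refining the dashed arrows in diagram \eqref{eq:BilDiagram} and its analogue for $\Cob^{\mathrm{con}}$, and then compose.

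\textbf{Step 1: the topological functor $\Cob_{3,4}\to\mathcal{A}$.} I would present every closed oriented $3$-manifold by surgery on a framed link in $S^3$ (Lickorish--Wallace) and every $4$-cobordism by a relative handle decomposition; using Wall's handle-trading one reduces to decompositions whose handles contribute transparently to the linking data. The content of this step is a relative Kirby calculus: one must enumerate the moves relating two presentations of the same geometric cobordism --- handle slides, (de)stabilization by $\pm 1$-framed unknots (i.e.\ $\CP^2$/$\overline{\CP^2}$-summands), and birth/death of cancelling $1$--$2$ and $2$--$3$ handle pairs --- and build the relations of $\mathcal{A}$ to match them exactly. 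This simultaneously answers \cref{q:functorialwelldefinedness2} and \cref{q:Zfactoring}; in particular it is here that one must honestly incorporate $1$- and $3$-handles and so leave the simply-connected world in which \cref{constr:3manifold} was phrased, extending $L_b$ and the transfers to $4$-manifolds with $\pi_1\neq 1$ (for a $1$-handle the effect should be the passage from $L_b$ to $L_{b\oplus(0)}\simeq L_b\oplus L_b[-1]$ as in \cref{otherhandles}).

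\textbf{Step 2: the algebraic functor $\mathcal{A}\to\Ho(\Mod_{\TMF})$.} On objects this is $A\mapsto L_{b(W)}[3b^+-2b^-]$ for any bounding $4$-manifold $W$, well-defined by \cref{prop:welldefined}; to make it a functor one needs the equivalences to be canonical, for which I would invoke \cref{conj:hypothesis} to get Picard-groupoid-level naturality of the Looijenga line bundles, so that automorphisms of the $L_b$ do not proliferate. On morphisms one uses the transfer maps of \cref{sec: cobordisms}; compatibility with composition and with the symmetric monoidal structure (connected sum / disjoint union) follows from functoriality of the transfer together with \cref{lem:RestrictionTransferDirectSum} and \cref{lem: direct sum}, which also give the multiplicativity $\Zscr(X\#Y)=\Zscr(X)\cdot\Zscr(Y)$ and the normalizations $\Zscr(S^3)=\Zscr(\varnothing)=\TMF$. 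The substantive checks are the handle-move relations: $2$-handle slides and $\pm\CP^2$-(de)stabilization are exactly \cref{lem:KirbyMove}; $1$--$2$ handle cancellation must be verified to act by the identity on $\Zscr(M)$ via a computation of the same flavour as the description of $L_{(0)}$ in \cref{sec:exampleL0}, where the off-diagonal $\eta$ of \cref{prop:explicitduality} is precisely the ingredient that makes cancellation work (and that encodes the non-semisimplicity distinguishing this theory from a $\C$-valued one); the $2$--$3$ cancellation is the dual computation via \cref{thm:Lbduals}. Finally one upgrades the whole diagram from $\Ho(\Mod_{\TMF})$ to $\Mod_{\TMF}$, using the $\infty$-categorical naturality of spectral Grothendieck duality (\cref{prop:duals}) and, again, \cref{conj:hypothesis}.

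\textbf{The main obstacle.} I expect two hard points, of which the second is the genuine crux. First, the coherence upgrade: bundling all the $L_b$ and transfers into a single coherent symmetric monoidal $\infty$-functor requires not just that the linking form determines $\Zscr(M)$ up to equivalence but a chain-level refinement thereof, and rests on the still-open \cref{conj:hypothesis}. Second, and more seriously, is identifying the correct algebraic category $\mathcal{A}$ and proving the factorization through it once $1$- and $3$-handles are present: the well-definedness of $L_{b(W)}$ for non-simply-connected $W$ is not established in this paper, the ansatz of \cref{otherhandles} must be shown compatible with all handle slides, and it is conceivable that genuine naturality fails --- the repeated ``up to sign'' caveats suggest the honest theorem may only hold after replacing $\Cob_{3,4}$ by a $\Z/2$-central extension (a spin- or $w_2$-type refinement), so that part of the work is pinning down exactly which cobordism category the TQFT is defined on.
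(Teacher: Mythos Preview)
The statement is a \emph{conjecture} in the paper, not a theorem: the paper offers no proof, and explicitly says that ``verification of TQFT properties is work in progress, and the extent to which all of Atiyah--Segal axioms hold for this theory remains to be established.'' There is therefore nothing to compare your attempt against.

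What you have written is not a proof either, and you are candid about this. It is a well-organized roadmap that closely mirrors the paper's own discussion of the open problems surrounding the conjecture: the factorization through an algebraic cobordism category is precisely the dashed-arrow diagram the paper sketches after \cref{q:Zfactoring} (where it invokes Ranicki and the nine-author paper as possible models but declines to pursue the matter); the reliance on \cref{conj:hypothesis} for Picard-groupoid-level coherence is exactly the dependency the paper flags; your $1$-handle ansatz is \cref{otherhandles}, where the paper likewise only says one ``expects'' the $1$--$2$ cancellation to give the identity; and your suggestion that the honest theorem may live on a $\Z/2$-extension of the bordism category matches the repeated ``up to sign'' caveats (e.g.\ \cref{constr:dhedafunction} and the footnote to \cref{constrA}).

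So you have correctly identified both the architecture of a would-be proof and the two genuine obstacles --- defining the right algebraic cobordism category once $1$- and $3$-handles are present, and the coherence upgrade resting on the open \cref{conj:hypothesis} --- but neither you nor the paper resolves them. These are real open problems, not oversights in your exposition; your ``proof proposal'' is a reasonable research program, not a proof.
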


Verification of TQFT properties is work in progress, and the extent to which all of Atiyah--Segal axioms hold for this theory remains to be established. Since \cref{prop:Zduality} holds only up to a degree-shift, it would be necessary to also incorporate a degree-shift into the Atiyah--Segal axioms.  

Let us discuss one duality statement which would follow from the TQFT axioms and which we will use later.

 \begin{question}\label{quest:cobordismupsidedown}
            Does turning a cobordism upside down give rise to a dual map of TMF-modules?

            More precisely, consider a 4-dimensional cobordism $W$, consisting of 2-handles, from $M_0$ to $M_1$, and let $W^*$ denote the ``up-side down'' cobordism from $\overline{M}_1$ to $\overline{M}_0$. Choose simply-connected bounding $4$-manifolds $V_0$ and $V_1$ for $M_0$ and $M_1$, respectively with $V_1\cong V_0\cup_{M_0}W$. Consider $V_0' := \overline{V}_1\cup_{\overline{M}_1}W^*$. Let 
            \[\overline{d} = 3(b^+(V_0')-b^+(\overline{V_1})) -2(b^-(V_0')-b^-(\overline{V_1})).\]
            From the construction above, we obtain a map $\Zscr(\overline{M}_1)[\overline{d}] \to \Zscr(\overline{M}_0)$. By \cref{prop:Zduality}, we can identify this with a map $\Zscr(M_1)^{\vee}[-b_1(M_1)+\overline{d}] \to \Zscr(M_0)^{\vee}[-b_1(M_0)]$.  The following calculation of the degrees shows that the domain and the range of this map coincide with those of the dual of the map $\Zscr(M_0)[d] \to \Zscr(M_1)$, up to an overall degree shift, with $d$ as in \cref{constr:cobordism}.

            For this calculation, first observe that $V'_0=\overline{V}_0\cup_{\overline{M}_0}\overline{W}\cup_{\overline{M}_1} {W^*}$. The union $\overline{W}\cup_{\overline{M}_1} {W^*}$ is a ``relative double'' whose Kirby diagram is obtained by adding a zero-framed meridian to each framed curve representing a 2-handle of $\overline{W}$, cf.\ \cite[Example 5.5.4]{GS}. 
            Using these meridians, in a Kirby diagram for $V'_0$ the attaching curves of the $2$-handles corresponding to $\overline V_0$ can be slid off those for $\overline W$, and moreover $\overline{W}\cup_{\overline{M}_1} {W^*}$ is seen to be diffeomorphic to a connected sum of copies of $S^2\times S^2$ and $S^2\widetilde{\times} S^2$. 
            It follows that $b^{\pm}(V'_0)=b^{\pm}(\overline{V}_0)+{\rm rk}(H_2W)$. Then $$\overline{d}= -d+b^-(V_0)+b^+(V_0)-b^-(V_1)-b^+(V_1)+{\rm rk}(H_2W).$$
            Next observe that $b_1(M_0)={\rm rk}(H_2 V_0)-b^+(V_0)-b^-(V_0)$, with the analogous equality for $M_1$. Combining these observations, the overall degree shift of the map $\Zscr(M_1)^{\vee}[-b_1(M_1)+\overline{d}] \to \Zscr(M_0)^{\vee}[-b_1(M_0)]$ by $b_1(M_0)$ is seen to be equal to $\Zscr(M_1)^{\vee}[-d] \to \Zscr(M_0)^{\vee}$.
            \end{question}

        \section{Examples} \label{sec: computations of cobordisms and transfers}
        In this section we give some concrete calculations of our invariants of 3- and 4-manifolds.
        \subsection{Examples of modules associated to $3$-manifolds}
        \subsubsection{} The module associated with the sphere $S^3$, $\Zscr(S^3)$, is isomorphic to $\TMF$. This may be seen for example by representing $S^3$ as the boundary of the $4$-ball $D^4$, so the bilinear form   is of rank zero. From the TQFT perspective, the $4$-ball induces an isomorphism from the ground ring $\TMF$ associated with the empty $3$-manifold to $\Zscr(S^3)$. More generally, since the invariant of a 3-manifold $M$ depends only on the rank of $H_1(M)$ and the torsion linking form, $\Zscr(M)\simeq \TMF$ for any integral homology 3-sphere $M$. 
        \subsubsection{} Consider $S^2\times S^1$
        as the boundary of $D^4$ with a single $0$-framed 2-handle attached to the unknot. The intersection form used to define the $\TMF$-module is the bilinear form $(0)$ of rank $1$. According to \eqref{ex:Oke},
        \[ \Zscr(S^2\times S^1) = L_{(0)} \simeq \TMF \oplus \TMF[-1]. \]
        Again, the same result holds for any $3$-manifold which is an integer homology $S^2\times S^1$. 
        \subsubsection{} More generally, let $\Sigma_g$ denote the closed orientable surface of genus $g$, and consider $\Sigma_g\times S^1$ as the boundary of the 4-manifold $W$ with the Kirby diagram shown in Figure \ref{fig:surface_times_S1}. 
        \begin{figure}[ht]
\includegraphics[height=3cm]{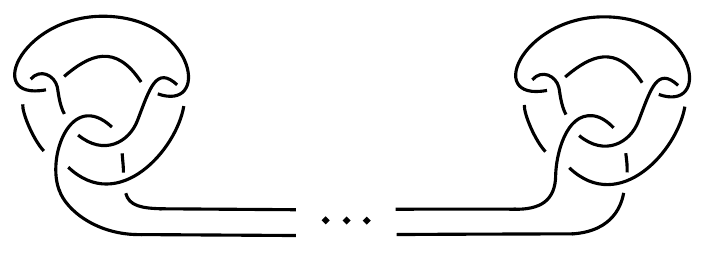}
{\scriptsize
\put(-240,68){$0$}
\put(-239,40){$0$}
\put(-70,68){$0$}
\put(-69,40){$0$}
\put(-225,10){$0$}
}
\caption{A 4-manifold $W$ with $\partial W=\Sigma_g\times S^1$.}
\label{fig:surface_times_S1}
\end{figure}
The second homology $H_2(W)$ is the free abelian group of rank $2g+1$, and the intersection pairing on $H_2(W)$ is identically zero since the linking numbers of the link in Figure \ref{fig:surface_times_S1} are all trivial.
By Lemma \ref{lem: direct sum}, \[ \Zscr(\Sigma_g\times S^1)  \simeq (\TMF \oplus \TMF[-1])^{\otimes (2g+1)}.\]
        \subsubsection{} Consider a lens space $L(n,1)$, as the boundary of $D^4$ with an $n$-framed $2$-handle attached to the unknot. The module $\Zscr(L(n,1))$ is defined using the bilinear form $(n)$; it is discussed in \cref{ex:Oke}.  In particular, for $n\geq 1$, after inverting 6, it is equivalent to a sum of $n$ shifted copies of $\TMF$, but the torsion is quite subtle, see \cite{BauerJacobi} for more detailed information. 
        
         \subsection{Torsion classes associated to $4$-manifolds}\begin{example}\label{ex:S2xS2}
            The closed $4$-manifold $S^2\times S^2$ has intersection form $\begin{pmatrix}0&1\\1&0\end{pmatrix}$. By \cref{closed 4manifolds}, we obtain an associated map $\Zscr(S^2\times S^2)\colon \TMF[1] \to \TMF$. We want to show that this is multiplication by $\eta$, i.e.\ $\Zscr(S^2\times S^2) = \eta \in \pi_1\TMF$. (Note that there is no potential sign ambiguity here since $\eta = -\eta$.)  

            We can decompose the cobordism $S^2\times S^2$ from $\varnothing$ to $\varnothing$ into two cobordisms: $S^2\times D^2\colon\varnothing \to S^2 \times S^1$ and $S^2\times D^2\colon S^2\times S^1 \to S^2\times S^2$. Since $S^2 \times D^2$ has intersection from $(0)$, \cref{constr:cobordism} allows to write $\Zscr(S^2\times S^2)$ as the composition
            \[\widetilde{\Zscr}(\varnothing,\varnothing)[1] \to \widetilde{\Zscr}(S^2\times S^1, S^2\times D^2)[1] \to \widetilde{\Zscr}(\varnothing, S^2\times S^2) .\]
            Using the definitions and our earlier computations (in particular \cref{ex:Oke} and \cref{prop:welldefined}), we can identify this with 
\begin{equation}\label{eq:S2S2}L_{()}[1] \simeq \TMF[1] \to L_{(0)}[1]\simeq \TMF\oplus \TMF[1] \to L_{()}\simeq \TMF.\end{equation}
            The first map is dual to the restriction $L^{(0)}\simeq \TMF\oplus \TMF[1] \to L^{()}\simeq \TMF$, which is by \cref{lem:restriction} given by $(1\; \eta)$. Thus, the first map itself is given by $\begin{pmatrix}\eta \\ 1 \end{pmatrix}$. 

            To compute the second map, we \emph{assume a positive answer to \cref{quest:cobordismupsidedown}}. Thus, the second map is dual to the first map, using the duality equivalence $L_{(0)}^\vee \simeq L_{(0)}[-1]$. This was made explicit in \cref{prop:explicitduality}, and the self-equivalence of $\TMF\oplus \TMF[1]$ induced by it is given by the matrix $\begin{pmatrix}\pm 1 &\eta \\
    0 & \pm1 \end{pmatrix}$. Thus, the whole composite is given as 
    \[\begin{pmatrix}1&\eta\end{pmatrix}\cdot \begin{pmatrix}\pm 1 &\eta \\
    0 & \pm1 \end{pmatrix}\cdot \begin{pmatrix}\eta\\ 1\end{pmatrix} = \eta. \]   
    Chasing through the proof of \cref{prop:explicitduality}, $\eta$ appears here as coming from the element in $\pi_1^{\mathrm{st}}S^0$ corresponding to the framed manifold $U(1)$. Physically, this corresponds to the $\sigma$-model with target $U(1)$. 
            
        \end{example}

        \begin{remark}
            We can generalize the preceding example to the double $DW$ of any compact simply-connected $4$-manifold $W$ with boundary and trivial intersection form. If $H_2(W)$ has rank $d$, the map $\Zscr(DW)\colon \TMF[d] \to \TMF$ turns out to be the $d$-th tensor power (over $\TMF$) of \eqref{eq:S2S2}; cf.\ \cref{lem: direct sum} and \cref{lem:RestrictionTransferDirectSum}. (Here, we still assume a positive answer to \cref{quest:cobordismupsidedown}.) Thus, the $\TMF$-element $\Zscr(DW)\in \pi_d\TMF$ associated to $DW$ is $\eta^d$. 
       \end{remark}

        \begin{example}\label{ex:CP2}
            Consider $\CP^2$ with intersection form $(1)$. We claim that $\Zscr(\CP^2) = \pm \nu\in \pi_3\TMF$. 
            
            By construction, $\Zscr(\CP^2)$ arises as the transfer $L_{()} \to L_{(1)}$ or, dually, as the restriction $\TMF[3] \simeq L^{(-1)} \to L^{()}=\TMF$. By construction, $L^{(-1)} \simeq \Gamma(\Otop_{\Escr}(-e)[2])$. 

            We will compute this restriction by reinterpreting it in equivariant elliptic cohomology. Recall from \cref{sec:exampleL0} or \cite{GepnerMeier} that there is a functor 
           \[\Oscr_{\Escr}^{(-)}\colon (\text{finite pointed }U(1)\text{-CW complexes})^{\op} \to \QCoh(\Escr, \Otop_{\Escr})\]
           (called $\mathcal{E}ll_{U(1)}$ in \cite{GepnerMeier}). Postcomposing with $\Gamma$ defines a functor 
           \[\TMF_{U(1)}^{(-)}\colon (\text{finite pointed }U(1)\text{-CW complexes})^{\op} \to \Mod_{\TMF}.\] 
           Since $\Oscr_{\Escr}^{S^{\C}}\simeq \Otop_{\Escr}(-e)$ by \cite[Lemma 9.1]{GepnerMeier}, we have $\TMF_{U(1)}^{S^{\C}} = L^{(-1)}[-2]$. The restriction map $L^{(-1)} \to L^{()}$ can be identified with a $[2]$-shift of the restriction map $\TMF_{U(1)}^{S^{\C}} \to \TMF^{S^2} \simeq \TMF[-2]$. 

           Note that we have a natural transformation $\TMF_{U(1)}^{(-)} \to \TMF_{U(1)}^{(-) \wedge EU(1)_+} \simeq \TMF^{() \wedge_{U(1)}EU(1)_+}$ to the Borel theory. Applying this to the map $S^0 \to S^{\C}$ and restriction, we obtain a commutative diagram
            \[
\xymatrix{
\TMF_{U(1)}^{S^0}\simeq \TMF\oplus \TMF[1] \ar[r] & \TMF^{\CP^{\infty}_+}  \\
\TMF_{U(1)}^{S^{\C}} \simeq \TMF[1] \ar[d]^r\ar[r]^t\ar[u]^f & \TMF^{\CP^{\infty}} \ar[u]\ar[d]^r \\
\TMF[-2] \ar[r]^{\simeq} & \TMF^{\CP^1}
}
            \]
We claim that one obtains $t$ by applying $\TMF^{(-)}$ to the reduced transfer $\CP^{\infty} \to \CP^{\infty}_+ \to S^{-1}$ (see e.g.\ \cite[Appendix A]{BauerJacobi} for the definition and properties of the reduced transfer). This follows from the following observations about the diagram above:
\begin{itemize}
    \item The map $\TMF^{\CP^{\infty}}\to \TMF^{\CP^{\infty}_+}$ has a splitting induced by the inclusion $\CP^{\infty} \to \CP^{\infty}_+$. In $\TMF$-cohomology, this corresponds to projecting away the first summand in $\TMF^{\CP^{\infty}_+}\simeq \TMF \oplus \TMF^{\CP^{\infty}}$. In particular, the map $\TMF[1]\to \TMF^{\CP^{\infty}}$ is the composite of 
    \[\TMF[1] \to \TMF_{U(1)}^{S^0} \to \TMF^{\CP^{\infty}_+}\to \TMF^{\CP^{\infty}}.\] 
    \item The map $f\colon \TMF[1]\to \TMF_{U(1)}^{S^0}\simeq \TMF\oplus \TMF[1]$ is the sum of the transfer (i.e.\ the inclusion of the second copy) plus the map $\eta\colon \TMF[1]\to \TMF$, as established in the proof of \cref{prop:explicitduality}.
    \item Thus, $\TMF[1]\to \TMF^{S^0}_{U(1)}\to \TMF^{\CP^{\infty}_+}$ is the sum of $\TMF[1] \xrightarrow{\eta} \TMF \to \TMF^{\CP^{\infty}_+}$ and the transfer map $\TMF[1]\to \TMF^{\CP^{\infty}_+}$ induced by the transfer map $\CP^{\infty}_+\to S^{-1}$ by applying $\TMF^{(-)}$. The former map vanishes upon projection to $\TMF^{\CP^{\infty}}$ and thus $t$ is as claimed.
\end{itemize}
Thus, the map $r$ is $\TMF^{(-)}$ applied to the composite
            \[S^2=\CP^1 \to \CP^{\infty} \to \CP^{\infty}_+ \to S^{-1}.\]
            This composite is known to be $\pm \nu$. One can e.g.\ argue as follows: As recalled e.g.\ in \cite[Appendix A]{BauerJacobi}, the cofiber of $\CP^{\infty}_+ \to S^{-1}$ is the stunted projective space $\CP^{\infty}_{-1}$, i.e.\ the Thom spectrum of minus the tautological line bundle on $\CP^{\infty}$. The operations $\mathrm{Sq}^4\colon H^{-2}(\CP^{\infty}_{-1}; \mathbb{F}_2) \to H^{2}(\CP^{\infty}_{-1}; \mathbb{F}_2)$ and $\mathcal{P}^1\colon H^{-2}(\CP^{\infty}_{-1}; \mathbb{F}_3) \to H^{2}(\CP^{\infty}_{-1}; \mathbb{F}_3)$ are non-trivial.\footnote{This can e.g.\ be deduced from the known Steenrod operations on the cohomology of $\CP^{\infty}$ together with the James-type periodicity results in \cite{DavisStunted}; while they are only stated for $\CP_m^n$ with $m\geq 0$, the same proof applies also to negative $m$.} Thus, the analogous fact is true for the $2$-cell complex arising as the cofiber of our map $S^2\to S^{-1}$. This means that the corresponding element in $\pi_3^{\mathrm{st}}S^0$ is detected in the $1$-line of the Adams spectral sequence both $2$-locally and $3$-locally. The only such element is $\pm \nu$. 
        \end{example}

        \begin{example} The intersection form of $\overline{\CP}^2$ is $(-1)$. Thus, $\Zscr(\overline{{\mathbb C}{\mathbb P}}^2) \in \pi_{-1}\TMF$. Since the whole group is zero, $\Zscr(\overline{{\mathbb C}{\mathbb P}}^2)$ must vanish.\end{example}

\section{Relation to theta functions}\label{sec:thetaFunctions}
Let $Q$ be a positive definite and unimodular quadratic form $\Z^d\to \Z$ and let $b$ be the corresponding even bilinear form. In \cref{constr:dhedafunction}, we have defined  an element $\mathfrak{d}_b\in \pi_{-2d}\TMF$. The goal of this section is to explain its conjectural relation to theta-functions (\cref{conj:dbgoestotheta}) and describe a possible approach to resolving this conjecture. Note that the element $\mathfrak{d}_b$ agrees with the invariant we associate with a closed simply-connected $4$-manifold with intersection form $-b$ in \cref{closed 4manifolds}. 

We start by describing a physical motivation for \cref{conj:dbgoestotheta}: Assuming the Stolz--Teichner program (see \cite{stolz2011supersymmetric} or the next section for details), we expect that $\mathfrak{d}_b\in \pi_{-2d}\TMF$ corresponds to the lattice conformal field theory defined by $b$, which can mathematically be modeled by a lattice vertex operator algebra (see e.g.\ Section 7.3 of the contribution of Mason and Tutte in \cite{Window}). Thus, $\mathfrak{d}_b$ should be a refinement of the partition function of this theory, which is, up to powers of the discriminant, the theta function 
\[\Theta_Q=\Theta_b\colon \H \to \C, \qquad \tau \mapsto  \sum_{v\in \Z^d}q^{Q(v)},\]
with $q = e^{2\pi i\tau}$; see loc.cit., Formula (75). (This is a modular form of weight $d/2$ \cite[Section VII.6.5]{SerreArithmetic}.) More precisely, we conjecture:\footnote{When $b$ is positive definite, the lattice CFT only has left-moving degrees of freedom, and has a trivial $(0,1)$ supersymmetry with the supercharge $Q$ acting by zero on any states. The partition function of this theory is $\eta^{-d}\cdot \Theta_Q$. The Witten genus is a normalization of the partition function by an additional factor of $\eta^{-2d}$ \cite{witten1987elliptic}, leading to $\eta^{-3d}\cdot \Theta_Q=\Delta^{-\frac{1}{8}d}\cdot \Theta_Q.$ When $b$ is not positive definite, there will be right-moving degrees of freedoms, including free fermions that cause the partition function to vanish. }
\begin{conj}\label{conj:dbgoestotheta}
The image of $\mathfrak{d}_b$ under the edge homomorphism $\pi_{-2d}\TMF \to M_{-d}$ is $\pm \Delta^{-\frac18d}\Theta_Q$, where $M_{-d}$ denotes integral weakly holomorphic modular forms of weight $-d$ .\footnote{By \cite[Section V.2.1]{SerreArithmetic}, the rank of every positive definite unimodular even bilinear form is divisible by $8$ so that $\Delta^{-\frac18d}$ is well-defined.} 
\end{conj}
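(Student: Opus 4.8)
We describe a possible approach to the conjecture, along the lines indicated above.

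Write $d = \rk b$ and recall that $8\mid d$ \cite[Chapter V]{SerreArithmetic}. By \cref{ex:unimodular} we have $L^b\simeq\TMF[-2d]$ and $L_{-b}\simeq\TMF[2d]$, and, unwinding \cref{constr:dhedafunction}, $\mathfrak{d}_b\in\pi_{-2d}\TMF$ is the class of the restriction map $\res_f\colon L^b\to L^{()}\simeq\TMF$ along $f\colon 0\hookrightarrow\Z^d$ (equivalently, the ``natural class'' $t_{-b}\in\pi_0L_{-b}$ coming from the transfer $\tr_f\colon\TMF=L_{()}\to L_{-b}$). Geometrically $f$ is the zero section $e\colon\MM\to\Escr^{\times_{\MM}d}$, so $\res_f$ is ``restrict a section to the zero section'', and after pushing forward to $\MM$ it becomes a map of sheaves $p_*\Lscr_b\simeq\Otop_\MM[-2d]\to e^*\Lscr_b\simeq\Otop_\MM$; on $\pi_{2k}$-sheaves this is a global map $\omega^{\otimes(k+d)}\to\omega^{\otimes k}$, i.e.\ multiplication by a weakly holomorphic modular form of weight $-d$, which is precisely the edge-homomorphism image of $\mathfrak{d}_b$ that we must identify. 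Since the edge homomorphism $\pi_{-2d}\TMF\to M_{-d}$ is injective modulo torsion and $\Delta^{-d/8}\Theta_Q=q^{-d/8}\prod_{n\geq1}(1-q^n)^{-3d}\bigl(1+O(q)\bigr)$ has integral $q$-expansion (using $8\mid d$ and $\Theta_Q\in\Z[[q]]$), the $q$-expansion principle reduces us to proving the statement after base change to $\C$, hence to computing the $q$-expansion of $\mathfrak{d}_b$.

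For that I would restrict the whole construction to the Tate curve. Over the formal neighbourhood of the cusp, $\MM$ is $\Spf\Z(\!(q)\!)$ carrying $\mathrm{Tate}(q)=\mathbb{G}_m/q^{\Z}$, on which $\Otop_\MM$ restricts to a $q$-completed form of complex $K$-theory realizing the $q$-expansion on homotopy; accordingly $\Escr^{\times_{\MM}d}$ restricts to $(\mathbb{G}_m/q^{\Z})^{\times d}$ and the $U(1)^d$-equivariant elliptic cohomology of \cite{GepnerMeier} to $q$-completed $U(1)^d$-equivariant $K$-theory. On $(\mathbb{G}_m/q^{\Z})^{\times d}$ (analytically $\C^d/(\Z^d+\tau\Z^d)$ with coordinate $z$) I would identify the restriction of $\Lscr_b$ with the classical Looijenga bundle $\Lscr_b^{\C}$ of \cref{def:ComplexLooijenga} up to a power of $\omega$; since $b$ is unimodular, $L^\vee=\Z^d$ in \eqref{eq:multivariabletheta}, so there is the canonical section $\theta_b(\tau,z)=\sum_{v\in\Z^d}q^{Q(v)}e^{2\pi i b(v,z)}$ of $\Lscr_b^{\C}\otimes\omega_\C^{\otimes d/2}$, which under these identifications should correspond to the preferred generator of $L^b$ up to an explicit power of $\Delta^{1/24}$. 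Restricting to $z=0$ (the operation $e^*$) then yields $\theta_b(\tau,0)=\sum_v q^{Q(v)}=\Theta_Q(\tau)$, and collecting the discriminant powers gives the $q$-expansion $\pm\Delta^{-d/8}\Theta_Q$, which by the previous paragraph completes the proof. The factor $\Delta^{-d/8}$ is forced: $\theta_b$ is a Jacobi form of weight $d/2$, whereas $\mathfrak{d}_b$ — through $L^b\simeq\TMF[-2d]$ — naturally lives where weight $-d$ forms do, and $\Delta^{-d/8}$ (of weight $-3d/2$) bridges exactly this gap; the three units of $\Delta^{-d/24}=\eta^{-d}$ match the classical normalization ``partition function $=\eta^{-d}\Theta_Q$'' composed with the further $\eta^{-2d}$ of the Witten-genus normalization (cf.\ \cite{witten1987elliptic} and the footnote to the statement).

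The main obstacle is exactly the identification in the previous paragraph: matching the restriction of the \emph{derived} $\Lscr_b$, together with its preferred global class, with the classical theta section, and thereby fixing the power of $\Delta^{1/24}$ without ambiguity (beyond the sign already present in \cref{constr:dhedafunction}). This needs the \emph{canonical}, weight-tracking equivalences $\Lscr_{b+b'}\simeq\Lscr_b\otimes\Lscr_{b'}$ and $\Lscr_{(1)}\simeq\Otop_\Escr(e)[-2]$ of \cref{conj:hypothesis}, propagated through $d$ tensor factors (\cref{lem: direct sum}) and through the pushforward $p_*\Otop_\Escr(e)\simeq\Otop_\MM$ of \cref{ex:Oke} (which carries a shift discrepancy relative to the classical $\Lscr_{(1)}^{\Z}\cong\Oscr_\Escr(e)\otimes\omega$, accounting for one factor $\eta^{-d}$). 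The cleanest route I would actually pursue is via Bauer's cofiber sequence \eqref{eq:BauerSequence} and the interpretation of $\Gamma(\Otop_\Escr(ne))$ as topological Jacobi forms in \cite{BauerJacobi}: assemble $\mathfrak{d}_b$ as the $z=0$ specialization of an explicit topological Jacobi form attached to $b$, whose $q$-expansion is computed there, then check compatibility with $\res_f$ and with the normalizations of \cite{MeierTwistings}. One should also be prepared to justify carefully that the restriction of the equivariant elliptic cohomology of \cite{GepnerMeier} to the Tate curve is the predicted completed equivariant $K$-theory with the Looijenga twist.
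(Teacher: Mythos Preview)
Your sketch and the paper's approach are both conditional outlines rather than complete proofs (the statement is, after all, a conjecture), but they proceed along genuinely different lines.

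The paper does \emph{not} attempt to compute the $q$-expansion directly at the Tate curve. Instead it isolates a single conjectural input, \cref{conj:thetaalgebraic}: that the holomorphic theta section $\theta_b$ of $\Lscr_b^{\C}\otimes\omega_{\C}^{\otimes d/2}$ refines to an algebraic section of $\Lscr_b^{\Z}\otimes\omega^{\otimes d/2}$. Granting this, the argument is structural rather than computational. First one proves (unconditionally) that the edge homomorphism $\pi_{-2d}\Gamma(\Lscr_b)\to H^0(\Escr^{\times_{\MM}d};\Lscr_b^{\Z}\otimes\omega^{\otimes(-d)})$ is an \emph{isomorphism} (\cref{prop:edgehomom}), so the image $d_b$ of the generator of $\pi_{-2d}\Gamma(\Lscr_b)\cong\Z[j]$ is a $\Z[j]$-module generator of the target. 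Then $\Delta^{-d/8}\theta_b=g\cdot d_b$ for some $g\in\Z[j]$, and the key step is a rigidity argument: $\theta_b/g=\Delta^{d/8}d_b$ is a Jacobi form of weight $d/2$ and index $b$, and by a theorem of Boylan--Skoruppa it must be a constant multiple of $\theta_b$, forcing $g\in\Z$; indivisibility of $\Theta_Q$ in $\Z[[q]]$ then gives $g=\pm1$. The commutativity of the square \eqref{Lscrbedge} finishes the identification.

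Your route, by contrast, tries to track the preferred generator of $L^b$ through explicit identifications over the Tate curve and $q$-completed equivariant $K$-theory, and to read off the discriminant power from the normalizations in \cref{ex:Oke} and \cref{conj:hypothesis}. This is a perfectly reasonable alternative strategy, and you correctly flag its weak point: pinning down the normalization of the derived $\Lscr_b$ versus the classical theta section, coherently across $d$ tensor factors, is exactly where the work lies. The paper's approach trades that bookkeeping for a different unresolved input (the algebraicity of $\theta_b$) plus an off-the-shelf rigidity theorem for Jacobi forms, which sidesteps all normalization-tracking once the edge homomorphism is shown to be an isomorphism. Your approach would, if completed, be more self-contained and would presumably also establish the algebraicity of $\theta_b$ as a byproduct; the paper's is shorter but leaves \cref{conj:thetaalgebraic} as a black box.
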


We present an approach to this conjecture by employing the multi-variable theta-function $\theta_b\colon \H\times\C^d\to \C$ from \eqref{eq:multivariabletheta}. By definition, $\Theta_b = \theta_b(-, 0)$. As noted in \cref{sec:ComplexPicture}, the function $\theta_b$ defines a holomorphic section of the Looijenga line bundle $\Lscr_b^{\C}\tensor \omega^{\otimes d/2}_{\C}$. While the line bundle $\Lscr_b^{\C}$ was constructed as a holomorphic line bundle, we have seen in \cref{sec: bilinear to TMF} that it arises as the complexification of the algebraic line bundle $\Lscr_b^{\Z}$ on $\Escr^{\times_{\Mscr}d}$. 

\begin{conj}\label{conj:thetaalgebraic}
    The holomorphic section $\theta_b$ of $\Lscr_b^{\C}\tensor \omega^{\tensor d/2}_{\C}$ refines to an algebraic section of $\Lscr_b^{\Z}\tensor \omega^{\tensor d/2}$. 
\end{conj}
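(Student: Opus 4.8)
The plan is to reduce the conjecture to an \emph{integral Fourier--Jacobi expansion principle} for the algebraic Looijenga line bundles and then to observe that the Fourier--Jacobi expansion of $\theta_b$ is visibly integral. Rewriting \eqref{eq:multivariabletheta} in terms of $q = e^{2\pi i\tau}$ and $\zeta_j = e^{2\pi i z_j}$, and using that the Gram matrix $B$ of $b$ is symmetric, one has
\[\theta_b(\tau, z) = \sum_{v\in \Z^d} q^{\frac12 b(v,v)}\,\zeta^{Bv}, \qquad \zeta^{Bv} := \prod_{j=1}^d \zeta_j^{(Bv)_j}.\]
Since $b$ is even, each exponent $\tfrac12 b(v,v)$ lies in $\Z_{\geq 0}$, and since $b$ is unimodular the map $v\mapsto Bv$ is a bijection of $\Z^d$; hence every Fourier--Jacobi coefficient of $\theta_b$ equals $0$ or $1$. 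In particular the expansion \eqref{eq:JacobiFourier} of $\theta_b$ has coefficients in $\Z$, so once we know that an integral expansion forces algebraicity over $\Z$ we are done.

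To make ``integral Fourier--Jacobi expansion $\Rightarrow$ algebraic over $\Z$'' precise, I would work over the compactification $\overline{\Mscr}$ of $\Mscr$ and the Tate curve $\Escr_{\mathrm{Tate}} = \mathbb{G}_m/q^{\Z}$ over $\Z(\!(q)\!)$, whose $d$-fold fibre power $\Escr_{\mathrm{Tate}}^{\times d} = \mathbb{G}_m^d/(q^{\Z})^d$ maps to a punctured formal neighbourhood of the cusp of $\Escr^{\times_{\Mscr} d}$. Over the Tate curve $\omega$ is canonically trivial, and a direct check from the defining rules (additivity, functoriality, and the normalization $\Lscr_{(1)}^{\Z}\cong \Oscr_{\Escr}(e)\tensor \omega$, together with the relation $e^*\Lscr_{(1)}^{\Z}\cong \Oscr$) identifies the pullback of $\Lscr_b^{\Z}\tensor\omega^{\tensor d/2}$ on $\mathbb{G}_m^d/(q^{\Z})^d$ with the line bundle whose sections are exactly the Laurent series $f(q;\zeta_1,\dots,\zeta_d)$ with $\Z(\!(q)\!)$-coefficients transforming under $\zeta_j\mapsto q\zeta_j$ by the cocycle of \cref{def:ComplexLooijenga}; this is the integral avatar of the description \eqref{eq:JacobiFourier} of Jacobi forms via their Fourier expansion. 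Granting (i) injectivity of restriction to this neighbourhood of the cusp on global sections of $\Lscr_b^{\Z}\tensor\omega^{\tensor d/2}$, and (ii) that a complex section whose expansion lands in $\Z(\!(q)\!)[\zeta_1^{\pm},\dots,\zeta_d^{\pm}]$ comes from an integral section, the above computation finishes the proof.

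A cleaner route to (i) and (ii), which exploits the hypotheses on $b$, is to push forward to $\Mscr$ first. For $b$ positive definite the restriction $\Lscr_b|_{\Escr_{\bar k}^{\times d}}$ is an ample line bundle on the abelian variety $\Escr_{\bar k}^{\times d}$ inducing a principal polarization (its numerical class is $B$, with $|\det B| = 1$), so $h^0 = 1$ and $h^i = 0$ for $i>0$ in every characteristic by Mumford vanishing. By cohomology and base change, $p_*\Lscr_b^{\Z}$ is therefore an invertible sheaf on $\Mscr_{\Z}$, hence $\cong \omega^{\tensor m}$ for some $m$, and $R^1 p_* = 0$; thus $p_*(\Lscr_b^{\Z}\tensor\omega^{\tensor d/2})\cong \omega^{\tensor(m+d/2)}$ and the conjecture becomes the assertion that $\theta_b$, viewed through this isomorphism, is an integral modular form of weight $m+d/2$ --- which is the \emph{classical} $q$-expansion principle, once one has pinned down the twist $m$, the integral normalization of the isomorphism, and the integrality of the corresponding $q$-expansion. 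One expects $m = 0$: restricting along $e\colon \Mscr\to \Escr^{\times_{\Mscr} d}$ (under which $\Lscr_b^{\Z}$ becomes canonically trivial, by the same normalization relation) sends $\theta_b$ to $\Theta_b = \theta_b(-,0)$, a weight-$d/2$ modular form with $q$-expansion $1 + \cdots$, which is also the bridge to \cref{conj:dbgoestotheta}.

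The main obstacle is precisely the \emph{integral} character of the expansion principle: one must rule out denominators, i.e.\ prove algebraicity over $\Z$ rather than merely over $\Z[1/N]$. In the direct approach this demands a careful model of the compactification of $\Escr^{\times_{\Mscr} d}$ and of the extension of $\Lscr_b^{\Z}$ across the cusp, plus control of $R^1 p_*$ and of possible $p$-torsion at the small primes $2$ and $3$ --- exactly the phenomena that make Katz's $q$-expansion principle a genuine theorem rather than a formality. The pushforward approach concentrates all of this into the identification of the single power $m$ and of the integral generator of $\omega^{\tensor(m+d/2)}$ matching $\Theta_b$, and appears to be the most promising way to carry the argument through.
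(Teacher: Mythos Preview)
The statement you are attempting to prove is labeled a \emph{conjecture} in the paper and is not proven there; the paper only uses it as a hypothesis to deduce \cref{conj:dbgoestotheta}. So there is no ``paper's own proof'' to compare your proposal against.

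Your proposal is an outline of a strategy rather than a proof, and you correctly flag its gaps. A few specific comments:

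\begin{itemize}
\item Your ``cleaner route'' via the pushforward is essentially carried out in the paper, but for a different purpose: \cref{lem:VanishingofHigherDirectImages} proves that $p_*\Lscr_b^{\Z}\cong\omega^{\tensor d}$ with vanishing higher direct images, by the stable-isomorphism trick $b\oplus(-1)\cong(1)^{\oplus d}\oplus(-1)$ rather than via ampleness and Mumford vanishing. In particular your expectation ``$m=0$'' is off; the correct value is $m=d$. The argument you give for $m=0$ conflates restriction $e^*$ with pushforward $p_*$: the fact that $e^*\Lscr_b^{\Z}$ is trivial says nothing about the twist appearing in $p_*\Lscr_b^{\Z}$, and the resulting map $\omega^{\tensor d}\cong p_*\Lscr_b^{\Z}\to e^*\Lscr_b^{\Z}\cong\Oscr_{\Mscr}$ is a genuinely nontrivial element of $H^0(\Mscr;\omega^{\tensor(-d)})$.
\item Even granting \cref{lem:VanishingofHigherDirectImages}, the conjecture does not follow. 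Knowing that $H^0(\Escr^{\times_{\Mscr}d};\Lscr_b^{\Z}\tensor\omega^{\tensor d/2})$ is a free rank-one $\Z[j]$-module does not by itself tell you that $\theta_b$ lies in its image inside the complex sections; you still need to identify the integral generator and match it to $\theta_b$ under the comparison $\Lscr_b^{\Z}\tensor\C\cong\Lscr_b^{\C}$. That identification \emph{is} the content of the conjecture, and it is exactly the step that a Fourier--Jacobi $q$-expansion principle would supply. The paper's argument (in the proof deducing \cref{conj:dbgoestotheta}) goes in the reverse direction: it \emph{assumes} $\theta_b$ is integral and then uses \cite{BoylanSkoruppa} to pin down which multiple of the generator it is.
\item Your diagnosis of the main obstacle is accurate: what is missing is an integral $q$-expansion principle for sections of $\Lscr_b^{\Z}$ over the Tate locus, including control at the primes $2$ and $3$. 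This is a plausible statement and your sketch of how to set it up over $\mathbb{G}_m^d/(q^{\Z})^d$ is on the right track, but the compactification and the extension of $\Lscr_b^{\Z}$ across the cusp need to be constructed, not assumed.
\end{itemize}

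In summary: reasonable strategy, honestly presented as incomplete; the paper does not claim to have a proof either.
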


The goal of the rest of this section is to show how \cref{conj:dbgoestotheta} is implied by \cref{conj:thetaalgebraic}, which we will assume from now on. We denote the section of $\Lscr^{\Z}_b\tensor \omega^{\tensor d/2}$ refining $\theta_b$ by $\theta_b$ as well. 

    Recall that we define the element $\mathfrak{d}_b\in \pi_{-2d}\TMF$ as the restriction of 
    \[1\in \pi_{-2d}\TMF[-2d] \simeq \pi_{-2d}\Gamma(\Lscr_b)\]
    to $\Mscr$, i.e.\ the image of this $1$ along $\pi_{-2d}\Gamma(\Lscr_b) \to \pi_{-2d}\Gamma(\Otop_{\Mscr}) = \pi_{-2d}\TMF$.  The edge homomorphism defines a diagram
    \begin{equation}\label{Lscrbedge}
    \xymatrix{
        \pi_{-2d}\Gamma(\Lscr_b) \ar[r]\ar[d] & H^0(\Escr^{\times_{\MM} d}; \Lscr_b^{\Z}\tensor \omega^{\tensor (-d)})\ar[d] \\
        \pi_{-2d}\TMF \ar[r] & H^0(\Mscr; \omega^{\tensor (-d)}).
    }\end{equation}
Here, we leave, as often, the pullback of $\omega^{\tensor (-d)}$ to $\Escr^{\times_\Mscr d}$ implicit. We will show the following result later in this section.

\begin{prop}\label{prop:edgehomom}
    The edge homomorphism $\pi_{-2d}\Gamma(\Lscr_b) \to H^0(\Escr^{\times_{\MM} d}; \Lscr_b^{\Z}\tensor \omega^{\tensor (-d)})$ is an isomorphism.
\end{prop}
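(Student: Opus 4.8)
The plan is to reduce to a diagonal form and then to identify the edge homomorphism with the degree-zero edge homomorphism of the descent spectral sequence for $\TMF$. For the reduction: by \cref{ex:unimodular} (using \cite[Chapter V.2.1]{SerreArithmetic}) the positive definite unimodular form $b$ is stably isomorphic to $\langle1\rangle^{\oplus d}$, and by \cref{lem:stable} this identifies $\Lscr_b$ with $\Lscr_{\langle1\rangle^{\oplus d}}$ compatibly with passing to $\Lscr_b^{\Z}$, to global sections, and to all homotopy sheaves. Since the edge homomorphism is natural in the sheaf, it suffices to treat $b=\langle1\rangle^{\oplus d}$.

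For $b=\langle1\rangle^{\oplus d}$, additivity, functoriality and normalization give $\Lscr_b\simeq\bigl(\bigotimes_{i=1}^d\pr_i^*\Otop_{\Escr}(e)\bigr)[-2d]$, hence $\Lscr_b^{\Z}\cong\bigl(\bigotimes_{i=1}^d\pr_i^*\Oscr_{\Escr}(e)\bigr)\tensor\omega^{\tensor d}$. Write $p\colon\Escr^{\times_{\Mscr}d}\to\Mscr$ for the projection. Iterating the projection-formula and push--pull computation from the proof of \cref{lem: direct sum}, together with $p'_*\Otop_{\Escr}(e)\simeq\Otop_{\Mscr}$ from \cref{ex:Oke} (where $p'\colon\Escr\to\Mscr$), I would obtain $p_*\Lscr_b\simeq\Otop_{\Mscr}[-2d]$. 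Passing to homotopy sheaves and using $\pi_{2m}\Lscr_b\cong\Lscr_b^{\Z}\tensor\omega^{\tensor m}$, the relative descent spectral sequence computing $\pi_*p_*\Lscr_b$ then has $E_2$-page concentrated in cohomological degree $0$ and therefore collapses; in particular the relative edge map $\pi_n(p_*\Lscr_b)\to p_*\pi_n\Lscr_b$ is an isomorphism for all $n$, and $H^0\bigl(\Escr^{\times_{\Mscr}d};\Lscr_b^{\Z}\tensor\omega^{\tensor(-d)}\bigr)\cong H^0(\Mscr;p_*\pi_{-2d}\Lscr_b)\cong H^0(\Mscr;\Oscr_{\Mscr})$.

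Now $\Gamma(\Escr^{\times_{\Mscr}d};\Lscr_b)=\Gamma(\Mscr;p_*\Lscr_b)\simeq\TMF[-2d]$, recovering \cref{ex:unimodular}. Factoring global sections on $\Escr^{\times_{\Mscr}d}$ through $p_*$ exhibits the edge homomorphism $\pi_{-2d}\Gamma(\Lscr_b)\to H^0\bigl(\Escr^{\times_{\Mscr}d};\pi_{-2d}\Lscr_b\bigr)$ as the edge homomorphism $\pi_{-2d}\Gamma(\Mscr;p_*\Lscr_b)\to H^0(\Mscr;\pi_{-2d}p_*\Lscr_b)$ followed by the isomorphism $\pi_{-2d}p_*\Lscr_b\xrightarrow{\sim}p_*\pi_{-2d}\Lscr_b$ from the previous step. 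Under $p_*\Lscr_b\simeq\Otop_{\Mscr}[-2d]$ the former is precisely the degree-$0$ edge homomorphism $\pi_0\TMF\to H^0(\Mscr;\Oscr_{\Mscr})=\Z[j]$ of the standard descent spectral sequence for $\TMF$, which is an isomorphism by the known computation of $\pi_*\TMF$ (see \cite{TMFBook}). This proves the proposition.

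The only step that is not purely formal is the compatibility of edge homomorphisms with derived pushforward in the spectral setting — the Leray-type factorization of the descent spectral sequence of $\Escr^{\times_{\Mscr}d}$ through those of $p$ and of $\Mscr$ — for which the collapse of the relative spectral sequence is the essential input. I would isolate this as a general lemma: for a quasi-coherent $\Otop_{\Escr\tensor F}$-module $\Lscr$ whose derived pushforward to $\Mscr$ has, via the relative edge maps, the homotopy sheaves of a shift of $\Otop_{\Mscr}$, the edge homomorphism for $\Lscr$ agrees with that of its pushforward. Granting this, the remaining ingredient $\pi_0\TMF\cong H^0(\Mscr;\Oscr_{\Mscr})$ is classical, so I do not anticipate a serious obstacle beyond bookkeeping.
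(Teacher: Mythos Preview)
Your argument is correct and follows the same overall architecture as the paper: factor global sections through $p_*$ to $\Mscr$, show the relative descent spectral sequence collapses so that $\pi_{-2d}p_*\Lscr_b\to p_*\pi_{-2d}\Lscr_b$ is an isomorphism, and then identify the remaining edge map with $\pi_0\TMF\to H^0(\Mscr;\Oscr_{\Mscr})=\Z[j]$.

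The one organizational difference is worth noting. You invoke \cref{lem:stable} at the outset to replace $\Lscr_b$ by $\Lscr_{\langle1\rangle^{\oplus d}}$ as derived line bundles on $\Escr^{\times_{\Mscr}d}$, and then the collapse of the relative spectral sequence is immediate from $p'_*\Otop_{\Escr}(e)\simeq\Otop_{\Mscr}$. The paper instead works with the original $b$ throughout and proves a separate \cref{lem:VanishingofHigherDirectImages} computing $R^j(p_d)_*\Lscr_b^{\Z}$ classically (via Grothendieck duality and composite-functor spectral sequences, again using the stable isomorphism $b\oplus(-1)\cong(1)^d\oplus(-1)$). Your route is a genuine streamlining: the same stable-isomorphism input is used, but applying it at the derived level via \cref{lem:stable} avoids the classical higher-direct-image computation entirely. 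The paper's route has the minor advantage that \cref{lem:VanishingofHigherDirectImages} also yields the explicit identification $H^0(\Escr^{\times_{\Mscr}d};\Lscr_b^{\Z}\tensor\omega^{\tensor(-d)})\cong\Z[j]$ needed in the subsequent proof of \cref{conj:dbgoestotheta}, but you recover this as well. The ``Leray factorization'' you flag at the end is exactly the commutative square the paper writes down at the start of its proof, justified by naturality of the edge homomorphism; no further input is needed there.
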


\begin{proof}[Proof of \cref{conj:dbgoestotheta} assuming \cref{conj:thetaalgebraic} and \cref{prop:edgehomom}: ]
    Let $d_b$ be the image of a generator of $\pi_{-2d}\Gamma(\Lscr_b)$ as a module over the ring $\Z[j]$ of integral weakly holomorphic modular forms under the edge homomorphism to $ H^0(\Escr^{\times_{\MM} d}; \Lscr_b^{\Z}\tensor \omega^{\tensor (-d)})$. By \cref{prop:edgehomom}, $d_b$ is a generator of $H^0(\Escr^{\times_{\MM} d}; \Lscr_b^{\Z}\tensor \omega^{\tensor (-d)})$ as a $\Z[j]$-module. By \cref{conj:thetaalgebraic}, $\Delta^{-\frac{d}8}\theta_b \in H^0(\Escr^{\times_{\MM} d}; \Lscr_b^{\Z}\tensor \omega^{\tensor (-d)})$ and thus there is a $g\in \Z[j]$ such that $\Delta^{-\frac{d}8}\theta_b = gd_b$.

    We claim that $\Delta^{\frac{d}8}d_b=\theta_b/g$ is a Jacobi form of index $b$ and weight $\frac{d}2$ (see \cref{sec:ComplexPicture} for a short recollection of Jacobi forms). It is indeed clear that it defines a holomorphic section of $\Lscr_b^{\C}\tensor \omega^{\tensor \frac{d}2}$, using the identification of $\Lscr_b^{\C}$ with the complexification of $\Lscr_b^{\Z}$ from \cref{UniquenessOfComplexLooijenga}. It moreover has a Fourier expansion of the form 
    \begin{equation} \sum_{n\geq 0}q^n\sum_{r\in L^{\vee}: b(r,r)\leq 2n}c(n,r)e^{2\pi ib(z,r)}.\end{equation}
    since the same is true for $\theta_b$ and $g = a_{-n}q^{-n} + O(q^{-n+1}).$

    By \cite[Theorem 5.1]{BoylanSkoruppa}, there exists thus a holomorphic modular form $h$ such that $\theta_b/g = h \theta_b$. By weight considerations, $h$ must be of weight $0$, i.e.\ $h$ is a constant in $\C$. From the resulting equation $gh =1$, we conclude $g \in \Z$. 

    The restriction $\Theta_Q = \sum_{n\geq 0} \#\{z\in \Z^d: Q(z)=n\} q^n$ of $\theta_b$ along the zero section $\Mscr \to \Escr^{\times_{\Mscr}d}$ is not divisible by any integer not equal to $\pm 1$ as a section of $\omega^{\tensor \frac{d}2}$. Thus, the same is true for $\theta_b$ as a section of $\Lscr_b\tensor \omega^{\tensor \frac{d}2}$. Hence, $g = \pm 1$ and $\theta_b = \pm  d_b$. 

    The commutativity of the diagram \eqref{Lscrbedge} thus implies the result. 
\end{proof}

    It remains to show \cref{prop:edgehomom}. For this, we need first to show an algebraic analogue of \cref{ex:unimodular}. 
    \begin{lemma}\label{lem:VanishingofHigherDirectImages}
        The pushforward of $\Lscr^{\Z}_b$ to $\Mscr$ is isomorphic to $\omega^{\tensor d}$ and all higher direct images vanish. Thus, \[H^0(\Escr^{\times_{\MM} d}; \Lscr_b^{\Z}\tensor \omega^{\tensor (-d)}) \cong \Z[j],\]
        the ring of (weakly holomorphic) modular forms of weight $0$.    
    \end{lemma}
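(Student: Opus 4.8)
The plan is to compute the derived pushforward $Rp_*\Lscr_b^{\Z}$ along the projection $p\colon\Escr^{\times_{\Mscr}d}\to\Mscr$ and reduce everything to the rank‑one case. Since $b$ is positive definite, even and unimodular of rank $d$, it is nondegenerate with $b^+=d$ and $b^-=0$. As in \cref{ex:unimodular} (using the classification of unimodular forms, \cite[Chapter~V]{SerreArithmetic}; note that $b\oplus\langle-1\rangle$ is an indefinite \emph{odd} unimodular form of rank $d+1$ and signature $d-1$, hence isomorphic to $\langle1\rangle^d\oplus\langle-1\rangle$), the form $b$ is stably isomorphic to $\langle1\rangle^d$, with the same rank $d$. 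The proof of \cref{lem:stable} uses only additivity and functoriality of the Looijenga line bundles, both of which are part of the construction of $\Lscr^{\Z}$ in \cref{sec: bilinear to TMF}; running that argument on the classical line bundles gives $\Lscr_b^{\Z}\cong\Lscr_{\langle1\rangle^d}^{\Z}$ on $\Escr^{\times_{\Mscr}d}$.

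By additivity and functoriality (the classical analogue of \cref{lem:directsum}, iterated), $\Lscr_{\langle1\rangle^d}^{\Z}\cong\bigotimes_{i=1}^{d}\pr_i^*\Lscr_{(1)}^{\Z}$, where $\pr_i\colon\Escr^{\times_{\Mscr}d}\to\Escr$ is the $i$‑th projection and, by normalization, $\Lscr_{(1)}^{\Z}=\Oscr_{\Escr}(e)\tensor\omega$. The next step is the relative Künneth isomorphism
\[Rp_*\Bigl(\bigotimes_{i=1}^{d}\pr_i^*\Lscr_{(1)}^{\Z}\Bigr)\;\simeq\;\bigotimes_{i=1}^{d}R(p_1)_*\Lscr_{(1)}^{\Z},\]
tensor products taken over $\Oscr_{\Mscr}$, with $p_1\colon\Escr\to\Mscr$. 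I would prove this by induction on $d$: writing $\Escr^{\times_{\Mscr}d}=\Escr\times_{\Mscr}\Escr^{\times_{\Mscr}(d-1)}$, flatness of $\Escr\to\Mscr$ lets one apply flat base change and the projection formula to peel off one factor at a time. Since $R(p_1)_*\Lscr_{(1)}^{\Z}=R(p_1)_*\Oscr_{\Escr}(e)\tensor\omega$, and $(p_1)_*\Oscr_{\Escr}(e)\cong\Oscr_{\Mscr}$ with $R^1(p_1)_*\Oscr_{\Escr}(e)=0$ (the classical content of \cref{ex:Oke}, equivalently Riemann--Roch on the genus‑one fibers together with cohomology and base change), each factor is $\omega$ concentrated in degree $0$. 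Hence $Rp_*\Lscr_b^{\Z}\simeq\omega^{\tensor d}$ is concentrated in degree $0$: that is, $p_*\Lscr_b^{\Z}\cong\omega^{\tensor d}$ and $R^ip_*\Lscr_b^{\Z}=0$ for all $i>0$.

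The statement about global sections then follows from the projection formula:
\[H^0\bigl(\Escr^{\times_{\Mscr}d};\Lscr_b^{\Z}\tensor\omega^{\tensor(-d)}\bigr)\;\cong\;H^0\bigl(\Mscr;(p_*\Lscr_b^{\Z})\tensor\omega^{\tensor(-d)}\bigr)\;\cong\;H^0(\Mscr;\Oscr_{\Mscr}),\]
and $H^0(\Mscr;\Oscr_{\Mscr})$ is exactly the ring of weakly holomorphic modular forms of weight $0$, i.e.\ $\Z[j]$. I expect the only real bookkeeping to be organizing the base‑change/Künneth argument over the Deligne--Mumford stack $\Mscr$ (flatness and perfectness of the complexes involved) and checking that \cref{lem:directsum}, \cref{lem:stable} and the computation in \cref{ex:Oke} transfer verbatim to the classical line bundles $\Lscr^{\Z}$; neither is serious. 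The one place where care is genuinely needed is pinning $p_*\Lscr_b^{\Z}$ down to be \emph{exactly} $\omega^{\tensor d}$, rather than an a priori unspecified twist of it, and this is controlled precisely by the normalization $\Lscr_{(1)}^{\Z}=\Oscr_{\Escr}(e)\tensor\omega$ together with $(p_1)_*\Oscr_{\Escr}(e)\cong\Oscr_{\Mscr}$.
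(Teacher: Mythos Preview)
Your argument has a genuine gap at the step ``running that argument on the classical line bundles gives $\Lscr_b^{\Z}\cong\Lscr_{\langle1\rangle^d}^{\Z}$ on $\Escr^{\times_{\Mscr}d}$'': this isomorphism is \emph{false} in general. Take $d=8$ and $b=E_8$. Restricting along the inclusion $\iota_1\colon\Escr\hookrightarrow\Escr^{\times_{\Mscr}8}$ of the first coordinate axis corresponds, by functoriality, to pulling back along $e_1\colon\Z\to\Z^8$, so $\iota_1^*\Lscr_b^{\Z}\cong\Lscr_{(b_{11})}^{\Z}\cong\Oscr_{\Escr}(b_{11}e)\otimes\omega^{\otimes b_{11}}$, which has degree $b_{11}$ on each elliptic-curve fibre. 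For $E_8$ this degree is $2$, for $\langle1\rangle^8$ it is $1$; hence the two line bundles are not isomorphic. The point is that an isomorphism of abstract forms $b\oplus c\cong b'\oplus c$ via some $\Phi\in GL_{d+d'}(\Z)$ only yields $\Lscr_{b\oplus c}^{\Z}\cong(\Escr^{\Phi})^*\Lscr_{b'\oplus c}^{\Z}$, not $\Lscr_{b\oplus c}^{\Z}\cong\Lscr_{b'\oplus c}^{\Z}$; pulling back along the unit section $\epsilon$ does not commute with $(\Escr^{\Phi})^*$ in the way needed to conclude $\Lscr_b^{\Z}\cong\Lscr_{b'}^{\Z}$. (The uses of \cref{lem:stable} elsewhere in the paper only require the consequence for $L^b$, i.e.\ after pushing to $\Mscr$, where $\Escr^{\Phi}$ becomes invisible.)

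The paper's proof sidesteps this by never asserting an isomorphism of line bundles on $\Escr^{\times_{\Mscr}d}$. It works at the stabilized rank $d{+}1$: since $\Escr^{\Phi}$ lies over $\Mscr$, one genuinely has $R(p_{d+1})_*\Lscr_{b\oplus(-1)}^{\Z}\cong R(p_{d+1})_*\Lscr_{(1)^d\oplus(-1)}^{\Z}$. The right-hand side is computed (essentially by your K\"unneth argument, which is fine) to be $\omega^{\otimes(d-2)}$ concentrated in degree $1$. One then runs the Grothendieck spectral sequence for the factorization $p_{d+1}=p_d\circ q_b$ through the $b$-summand projection: since $R^j(q_b)_*\Lscr_{b\oplus(-1)}^{\Z}\cong\Lscr_b^{\Z}\otimes\omega^{\otimes(-2)}$ is concentrated in $j=1$, the spectral sequence collapses and forces $R^i(p_d)_*\Lscr_b^{\Z}\cong\omega^{\otimes d}$ for $i=0$ and $0$ otherwise. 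In short, your K\"unneth step and your use of $b\oplus\langle-1\rangle\cong\langle1\rangle^d\oplus\langle-1\rangle$ are both correct ingredients; what is missing is that the comparison must be made \emph{after} pushing down to $\Mscr$, and extracting $R(p_d)_*\Lscr_b^{\Z}$ from $R(p_{d+1})_*\Lscr_{b\oplus(-1)}^{\Z}$ requires the extra spectral-sequence step rather than an identification of line bundles on $\Escr^{\times_{\Mscr}d}$.
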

    \begin{proof}
        By \cite[Section V.2.2, Theorem 4]{SerreArithmetic}, $b\oplus (-1) \cong (1)^d \oplus (-1)$. We have corresponding projections $\Z^{d+1} \to \Z^d$ onto the summands $(1)^d$ and $b$, respectively, and these induce projections 
        \[q_1, q_b\colon \Escr^{\times_{\MM} d+1}= \Escr\otimes\Z^{d+1} \to \Escr^{\times_{\MM} d} =\Escr\otimes\Z^{d}.\]
        We have a further projection $r\colon \Escr^{\times_{\MM} d+1}= \Escr\otimes\Z^{d+1} \to \Escr$ corresponding to the $(-1)$-summand in $\Z^{d+1}$. We further denote the projection of $\Escr^{\times_{\Mscr} n} \to \Mscr$ by $p_n$. 

        Using the additivity and functoriality properties of $\Lscr^{\Z}$, we have 
        \[ q_b^*\Lscr^{\Z}_b\tensor r^*\Lscr^{\Z}_{(-1)}\cong \Lscr^{\Z}_{b\oplus (-1)}\cong \Lscr^{\Z}_{(1)^d\oplus (-1)}\cong q_1^*\Lscr^{\Z}_{(1)^d}\tensor r^*\Lscr^{\Z}_{(-1)}.\]

         Recall that $\Lscr_{(1)}^{\Z}\cong \Oscr_{\Escr}(e)\tensor \omega$ and $\Lscr_{(-1)}^{\Z}$ is its dual, i.e.\ $\Oscr_{\Escr}(-e)\tensor \omega^{(-1)}$. Since 
        \[H^i(E; \Oscr_E(e)) = \begin{cases} R & \text{ for } i=0\\
        0&\text{ else}\end{cases}\] for every elliptic curve $E$ over a ring $R$, we have 
        \begin{equation}\label{eq:O(e)pushforward}R^i(p_1)_*\Oscr_{\Escr}(e) \cong \begin{cases} \Oscr_{\Mscr} & \text{ for } i=0\\
        0&\text{ else}\end{cases},\end{equation}
        where $R^i(p_1)_*$ denotes the $i$-th higher direct image, i.e.\ the $i$-th right derived functor of pushforward. 
        Grothendieck duality (cf.\ \cite[p.44-45]{DeligneRapoport})  implies that 
          \begin{align*}R^i(p_1)_*\Oscr_{\Escr}(-e) \tensor \omega &\cong  R^i(p_1)_*\mathcal{H}om(\Oscr_{\Escr}(e), \omega)\\ &\cong \mathcal{H}om_{\Mscr}(R^{1-i}(p_1)_*\Oscr_{\Escr}(e), \Oscr_{\Mscr})\\ &\cong \begin{cases} \Oscr_{\Mscr} & \text{ for } i=1\\
        0&\text{ else}\end{cases}.\end{align*}
        
        Using the projection formula and the commutation of higher direct images and flat base change, this implies        \begin{align*}R^i(q_1)_*\Lscr^{\Z}_{(1)^d \oplus (-1)}&\cong R^i(q_1)_*(q_1^*\Lscr^{\Z}_{(1)^d}\tensor r^*\Lscr^{\Z}_{(-1)})\\
        &\cong \Lscr^{\Z}_{(1)^d} \tensor R^i(q_1)_*r^*\Lscr^{\Z}_{(-1)}\\ &\cong \Lscr^{\Z}_{(1)^d} \tensor (p_d)^*R^i(p_1)_*\Lscr^{\Z}_{(-1)}
        \\
        &\cong \begin{cases}  \Lscr_{(1)^d}^{\Z}\tensor p_d^*\omega^{\tensor (-2)} &\text{ if } i=1 \\ 0 & \text{ else}\end{cases}.
        \end{align*}
        The same argument shows that the corresponding formula with $\Lscr_{b}^{\Z}\tensor p_d^*\omega^{\tensor (-2)}$ holds for $R^i(q_b)_*\Lscr^{\Z}_{b\oplus (-1)}$.

        Utilizing \eqref{eq:O(e)pushforward} and the composite functor Grothendieck spectral sequence , $(p_d)_*\Lscr_{(1)^d}^{\Z} \cong \omega^{\tensor d}$, while the higher direct images vanish. Using the collapsing composite functor spectral sequence 
        \[E_2^{ij} = R^i(p_d)_*R^j(q_1)_*\Lscr^{\Z}_{(1)^d\oplus (-1)} \Rightarrow R^{i+j}(p_dq_1)_*\Lscr^{\Z}_{(1)^d \oplus (-1)} \]
        and the projection formula again, 
        we calculate 
        \begin{equation}\label{eq:RiComposite}R^i(p_dq_1)_*\Lscr^{\Z}_{(1)^d \oplus (-1)} \cong \begin{cases} \omega^{\tensor (d-2)} &\text{ if } i=1 \\ 0 & \text{ else}\end{cases}.\end{equation}
        Using that the equally collapsing composite functor spectral sequence 
        \[E_2^{ij} = R^i(p_d)_*R^j(q_b)_*\Lscr^{\Z}_{b\oplus (-1)} \Rightarrow R^{i+j}(p_dq_b)_*\Lscr^{\Z}_{b\oplus (-1)} \]
        converges to the same target, we deduce that $(p_d)_*\Lscr^{\Z}_{b} \cong \omega^{\tensor d}$ and $R^i(p_d)_*\Lscr^{\Z}_{b}$ vanishes for $i>0$. 
        
        The projection formula implies that 
    \begin{align*}H^0(\Escr^{\times_{\Mscr d}}; \Lscr_b^{\Z}\tensor p_d^*\omega^{\tensor (-d)}) &\cong H^0(\Mscr; (p_d)_*(\Lscr_b^{\Z}\tensor p_d^*\omega^{\tensor (-d)})) \\&\cong H^0(\Mscr; \omega^{\tensor d} \tensor \omega^{\tensor (-d)}) \\&\cong H^0(\Mscr, \Oscr_{\Mscr}) \\&\cong \Z[j]. \qedhere\end{align*}
    \end{proof}

\begin{proof}[Proof of \cref{prop:edgehomom}: ]
    Let $p_d\colon \Escr^{\times_{\Mscr}d} \to \Mscr$ be the projection. The naturality of the edge homomorphism yields a commutative diagram 
    \[
\xymatrix{
\pi_{-2d}\Gamma((p_d)_*\Lscr_b) \ar[r]\ar[d]^{\cong} & H^0(\Mscr; \pi_{-2d}(p_d)_*\Lscr_b) \ar[d] \\
\pi_{-2d}\Gamma(\Lscr_b) \ar[r] & H^0(\Escr^{\times_{\Mscr}d}; \Lscr_b^{\Z}\tensor \omega^{\tensor -d}).
}
    \]
    To show that the right vertical arrow is an isomorphism it is enough to show that the natural map 
    \[\pi_{-2d}(p_d)_*\Lscr_b \to (p_d)_*(\Lscr^{\Z}_b \tensor \omega^{\tensor -d})\]
    is an isomorphism. 
    Consider the relative descent spectral sequence
    \[E_2^{ij} \cong R^j(p_d)_*\pi_i\Lscr_b \; \Rightarrow \; \pi_{i-j}(p_d)_*\Lscr_b. \]
    The odd homotopy groups of $\Lscr_b$ vanish, while for the even ones we have $\pi_{2i}\Lscr_b \cong \Lscr_b^{\Z}\tensor \omega^{\tensor i}$. Thus, the projection formula and \cref{lem:VanishingofHigherDirectImages} imply
    \[R^j(p_d)_*\pi_{2i}\Lscr_b \cong (R^j(p_d)_*\Lscr_b^{\Z})\tensor \omega^{\tensor i} \cong \begin{cases} (p_d)_*(\Lscr_b^{\Z}\tensor \omega^{\tensor i}) & \text{ if }j=0\\
    0 & \text{ if }j>0,\end{cases}\]
    which gives for $i=-d$ the required isomorphism. 

    Arguing as in \cref{ex:unimodular}, the proof of \cref{lem:KirbyMove} yields that $(p_d)_*\Lscr_b\simeq \Otop[-2d]$. Thus, the top horizontal arrow identifies with the edge homomorphism for $\Otop$, i.e.\ $\pi_0\TMF \to \Z[j]$, which is known to be an isomorphism.\footnote{The fact that the edge homomorphism $\pi_0\TMF \to \Z[j]$ is an isomorphism is well-known and essentially stated e.g.\ in \cite[Section 13.2]{TMFBook}, but without proof. The fact is equivalent to no class in the descent spectral sequence for $\TMF$ surviving in the $0$-th column above Row $0$. This descent spectral sequence is calculated in \cite[Section 8, Corollary E]{carrick2024descent}.} Thus, the top horizontal arrow is an isomorphism and it follows that the same is true for the bottom horizontal arrow, as claimed.   
\end{proof}

\section{TMF-modules and quantum field theories} \label{sec:TMFQFT} 

In this section, we will explain in more detail how TMF-modules arise from quantum field theory in three dimensions, how various results in previous sections can be understood from the physics point of view, and how TMF-modules can be a useful tool in the study of quantum field theories. We do not aim for mathematical rigor in this section, use physical language freely, and allow ourselves to speculate.  
To make sure that readers from a mathematical background can follow some of the discussion in this section, we first start by explaining how the physics setup is related to and generalizes the Stolz--Teichner program \cite{segal1988elliptic,stolz2004elliptic,stolz2011supersymmetric}. 

The Stolz--Teichner program can be understood in physics as a statement about the homotopy type of the space $\CT$ of 2d $(0,1)$ supersymmetric quantum field theories. Namely, 
$$\CT\simeq \bigcup_{d\in\Z} \TMF_d,$$
where $\TMF_d$ is the $(-d)$-th space of the $\Omega$-spectrum $\TMF$. To make sense of the statement, one needs to define a grading for $\CT$ so that it is identified with the grading in TMF. In physics terms, the grading is given by the ``gravitational anomaly" of the 2d theory. This was dealt with in \cite{stolz2004elliptic,stolz2011supersymmetric} with twisted theories, while another point of view, and in a sense more widely used in today's theoretical physics community, is to view the 2d theory as living on the boundary of a 3d theory, sometimes refered to as the ``anomaly theory'' which captures the anomaly via a procedure known as ``anomaly inflow.'' See Section 3 of \cite{GPPV} for a more detailed summary of the Stolz--Teichner program from the physics perspective, and \cite{FreedHopkins, FreedLectures} for more mathematical discussions related to anomaly theories.

What we will discuss in this section is in fact a generalization of this setup to the case where the three-dimensional ``anomaly theory'' is replaced with a more general quantum field theory.

Given a 3d quantum field theory $\FT$, one can consider the space of its 2d $(0,1)$ boundary conditions. In general, we will allow the boundary theory to have gravitational anomaly. So it is in fact a boundary condition for $\FT\otimes \FI_d$ (or alternatively, an interface between $\FT$ and $\FI_{-d}$), where $\FI_d$ denotes the invertible theory given by a gravitational Chern--Simons theory at level $d$. This is an ``almost trivial'' theory as there is only a single ground state on any closed 2-manifold, but nonetheless its partition function on a given three-manifold $Y$ will depend on the metric as well as the spin structure. One way to define this partition function is by fixing a spin 4-manifold $W$ with $\partial W=Y$, and then the partition function is\footnote{Re-expressing $p_1$ on a 4-manifold as a boundary term is in fact the original motivation for the original work of Chern and Simons \cite{chern1974characteristic}.}
\begin{equation*}
    \FI_d(Y)=e^{-2\pi i \cdot d\int_W \frac{p_1}{48}},
\end{equation*}
where $p_1$ is the (metric-dependent) local Pontryagin form. By 
\cref{principle} and \cref{prop:P4anomaly}, the group of deformation classes of invertible (unitary, spin) 3d-theories is $\Z \cong \Hom(\Omega_4^{\mathrm{Spin}}, \Z)$. The gravitational Chern--Simons theory at level $d$ corresponds to the homomorphism $M\mapsto -\frac{d\cdot p_1(M)}{48}$, which for $d=1$ is a generator of this group. 
 
We denote the space of such boundary conditions (or interfaces) as $\CB_d(\FT)$.\footnote{We will choose the orientation to be that of the outgoing boundary. For an interface between $\FT_1$ and $\FT_2$, the convention for orientation is such that it is an outgoing boundary for $\FT_1$ and incoming boundary for $\FT_2$.  }
When $\FT=\FI_0$ is the trivial theory, 
\begin{equation*}
    \CB_d(\FI_0)=\CT_d
\end{equation*}
is the same as the space of $(0,1)$ theories with gravitational anomaly $d$. 

For each $d$, the space $\CB_d(\FT)$ is a module over $\CT_0$ by the stacking of 2d QFTs on top of a boundary condition, while a theory in $\CT_{d'}$ gives a map from $\CB_d(\FT)$ to $\CB_{d+d'}(\FT)$. Therefore, if one combines all $d$,
\begin{equation*}
\CB(\FT):=\bigcup_{d\in\Z}\CB_d(\FT)
\end{equation*}
is a $\CT:=
\CB(\FI_0)$ module. The space $\CB(\FT)$ is expected to be itself an $\Omega$-spectrum with the structure map $\CB_{d+1}(\FT) \rightarrow \Omega \CB_d(\FT)$.\footnote{Notice that $\CB_d$ is the $(-d)$-th space for the $\Omega$-spectrum. The structure map can be constructed by adding a free $U(1)$ vector multiplet which results is an $S^1$-family parametrized by the theta angle, while the inverse map is given by promoting the parameter for the $S^1$-family to a dynamical field. See \cite{Sigma} for more details and also \cite{GJ2,Johnson-Freyd:2020itv} for an alternative and possibly related proposal for $\CT$.}  
With $(0,1)$ supersymmetry, one conjectures that $\CT$ and TMF are actually equivalent as spectra  and that this equivalence can be promoted to be one of $E_{\infty}$-ring spectra. The $\CT$-action on $\CB(\FT)$ suggests that under this equivalence, $\CB(\FT)$ becomes a TMF-module. We will assume this throughout this section. 

At this stage, we have not made any assumptions of $\FT$ except that it is a 3d theory. For our applications, it would be useful if the homotopy type of $\CB(\FT)$ is first of all interesting, and, furthermore, invariant under ``continuous deformations'' of $\FT$. For a generic 3d theory $\FT$, one would not expect that this will be the case (for example, $\FT$ can possibly be deformed into the trivial theory, due to the lack of obstructions). There are two subclasses of 3d theories where one can avoid this problem. One is the class of topological theories. Under certain assumptions, they are rigid and cannot be deformed (e.g.~for fusion categories, which describe the behavior of the line operators of the topological theory, this is often known as Ocneanu rigidity \cite{etingof2005fusion}). Another bigger class is supersymmetric theories, whose deformation is highly constrained by supersymmetry. And one expects that these are the cases when $\CB(\FT)$ can be more interesting. In general, topological theories can also be viewed as supersymmetric with the action of the supercharge being trivial. Such an action is only consistent in a topological theory, as the commutator of two supercharges is a space-time translation. And from now on we will assume $\FT$ is supersymmetric, which implicitly includes also topological theories.

Operations for 3d bulk theories can lead to interesting consequences for their space of boundary conditions. One such operation is the tensor product $\FT_1\otimes\FT_2$ given by stacking the two theories together in a non-interacting way. This leads to a map
\begin{equation}\label{TensorMap}
\CB(\FT_1)\otimes_\CT \CB(\FT_2)\rightarrow\CB(\FT_1\otimes\FT_2).
\end{equation}
On the left, it is a tensor product over $\CT$ as one can move decoupled 2d theories between the two factors. Naively, one should not expect this to be an equivalence, as there can be more interesting boundary conditions that couple the two factors together. On the other hand, it is natural to associate to $\FT_1$ and $\FT_2$ two derived vector bundles over $\mathcal{M}$, and interpret $\CB(\FT_1)$ and $\CB(\FT_2)$ as the global sections of these two vector bundles.\footnote{Over complex numbers, this becomes the more familiar story of a 3d theory giving rise to a vector bundle over the moduli space of elliptic curve by considering its Hilbert space on a torus.} Then one expects this is actually an equivalence since taking global sections on $\MM$ commutes with taking the tensor product of two derived vector bundles by the main theorem in \cite{MathewMeier}. This motivates the following physics question.

\begin{question}
    Is the map \eqref{TensorMap} an equivalence? If yes, what is the physical reason for it and how general is this phenomenon?
\end{question}

If the second theory $\FT_2$ is a gravitational Chern--Simons theory at level $d$, then  $\CB(\FT_2)=$TMF$[-d]$, and tensoring with it becomes the shift operation,
\begin{equation}\label{ShiftMap}
\CB(\FT_1)\otimes_\CT \CB(\FI_d)\rightarrow\CB(\FT_1)[-d].
\end{equation}
For this reason we will abbreviate $\FT\otimes\FI_d$ as $\FT[-d]$.

The space of boundary condition on an incoming boundary, $\bar{\CB}(\FT)$, is also naturally a $\CT$-module, and it can be paired with $\CB(\FT)$ to give a map to TMF. This gives a map to the dual of $\CB(\FT)$,$$\bar{\CB}(\FT)\rightarrow\CB(\FT)^\vee.$$
There are reasons to believe that this is an equivalence if $\FT$ is a sufficiently nice QFT (e.g.~unitary, local, and Lorentz invariant). 

This can be thought of as the outgoing boundary of the orientation reversed theory $\bar{\FT}_1$
 which is obtained from ${\FT}_1$ by flipping the orientation of the spacetime,
 $$\bar{\CB}(\FT)\simeq\CB(\bar{\FT}).$$
 From the point of view of derived vector bundle over $\MM$, one should associate to it the dual bundle to that of $\FT$.

One can combine the tensor product and orientation reversal to relate the space of interfaces between $\FT_2$ and $\FT_1$ with $\CB({\FT}_1\otimes\bar\FT_2)$ using the ``folding trick.''\footnote{This is most commonly used in topological theories, where folding along the interface between two theories leads to a boundary condition for the tensor product of one theory with the dual of the other. In the framework of fully extended TQFTs, when $\FT_1=\FT_2$, the category of boundary conditions is what the TQFT assigns to a point, while the category of interfaces is associated with $S^0$ which consists of two points with the opposite orientation. The TQFT axioms then identify this with the category of boundary conditions of the tensor product of the original theory with the one obtained by orientation reversal.  For non-topological theories, there is in general no guarantee that the space of interfaces and boundary conditions are isomorphic. One scenario for this to fail is when the TMF-modules are of infinite rank. Also notice that, even when $\frak{T}$ is a 3d TQFT, our approach is different from that of the extended TQFT, as we obtain a space, as opposed to a 2-category, of boundary conditions. In particular, our machinery can handle TQFTs that do not have topological boundary conditions such as $U(1)$ Chern--Simons theory at a non-zero level.} Any such interface $\mathcal{I}$ gives a map
\begin{equation*}
\mathcal{I}: \quad\CB(\FT_2)\rightarrow \CB(\FT_1).
\end{equation*}
This is because, given any boundary condition $B$ of $\FT_1$, one can collide it with $\mathcal{I}$ by collapsing $\FT_1$ and they together become a boundary condition of $\FT_2$. 

The question above can also be stated in terms of interfaces. Naively, one would expect that there are more general interfaces between two theories then these obtained from colliding boundary conditions. For example, one can consider interfaces obtained by colliding interfaces with a third theory $\FT'$,
$$
\CB({\FT}_1\otimes \bar\FT')\otimes_{{\mathrm{End}(\FT')}} \CB({\FT}'\otimes \bar\FT_2)\rightarrow \CB({\FT}_1\otimes\bar\FT_2),
$$
and it would be surprising that for $\FT'$ being the trivial theory one can already have all classes of interfaces. On the other hand, it \emph{is} true for derived vector bundles $\Fscr_1$, $\Fscr'$ and $\Fscr_2$ on $\MM$ that 
\[(\Fscr_1\tensor (\Fscr')^{\vee})\tensor_{\mathcal{E}nd(\Fscr')}(\Fscr'\tensor \Fscr_2^{\vee}) \to \Fscr_1\tensor \Fscr_2^{\vee}\] 
is an equivalence, as can be seen locally where the vector bundles are free, and this equivalence remains valid on passing to global sections everywhere by the main result of \cite{MathewMeier}.

The last operator is ``addition,'' making the sum $\FT_1\oplus\FT_2$ from $\FT_1$ and $\FT_2$ viewed as two superselection sectors. Then there is a map $$\CB(\FT_1)\times\CB(\FT_2)\rightarrow\CB(\FT_1\oplus\FT_2).$$
This is expected to be an equivalence in general for quantum field theories. 

\subsection{TMF-modules and phases.}    Beside mathematical applications, we also expect that the TMF-module invariants of 3d supersymmetric theories can be useful in the study of the structure of phases and dynamics of such theories. We now take a physics detour to discuss this in further details.

This new invariant is expected to detect finer structure then the more familiar invariants such as the Witten index \cite{WITTEN1982253}. For example, during a phases transition, the Witten index can remain constant while the TMF-module can change. This is because the Witten index for a 3d supersymmetric theory is ultimately the index of its KK-reduction on $T^2$, and different phases in 3d can become connected once reduced to 1d.  For example, the Fayet--Iliopoulos (FI) parameter \cite{FAYET1974461} for a 3d $\mathcal{N}=2$ theory is real, and there can be walls separating different phases, but it will become complexified once we reduce the theory to 2d, allowing one to circumvent the transition point by turning on the imaginary part of the FI parameter. Other more familiar invariants (such as the algebra of BPS line operators) are often also subject to the same weakness. On the other hand, the TMF-module is a genuine invariant of 3d theories, and could better distinguish phases. We now give an example of this.

\begin{example}
    Consider a family of 3d $\mathcal{N}=2$ theory given by $U(1)_{k/2}$ with $k$ charge-1 hypermultiplets. When we vary the FI parameter $\zeta$ associated with the $U(1)$ gauge group, we get in the region $\zeta < 0$ a $U(1)_k$ Chern--Simons theory at low energy, while in the region $\zeta > 0$ one gets a $\mathbb{CP}^{k-1}$ sigma model. See \cite[Sec.~7]{kapustin2013wilson} and \cite[Sec.~3]{Gukov:2015sna} for more details. The Witten index in either phase is $k$, and even the BPS algebra of line operators are isomorphic on both sides. However, if one considers their TMF-modules, then they are not the same. In the sigma-model phase, the TMF-module is expected to be $\TMF\otimes \,\mathbb{CP}^{k-1}_+$, associated with the unreduced TMF homology of $\mathbb{CP}^{k-1}$. This is rank-$k$ module, which, after inverting 6, becomes TMF$\oplus$TMF$[2]\oplus\ldots$TMF${[2k-2]}$. (More on this in the next subsection.) In the Chern--Simons phase, as we will see later, one has the module $L_k$, which is also of rank-$k$, but different as module (often even after inverting 6). When $k=2$, the former is TMF$[2]\oplus$TMF, while the latter, up to a shift, is the mapping cone of $\nu$ (cf.~ \cref{ex:Oke}). This demonstrates that these are in fact two different 3d phases. Notice that under dimensional reduction, we end up with the same KU-module given by KU$\oplus$KU.
\end{example}

Phase transitions often happen when the spectrum of the theory becomes massless. At the phase boundary, the Witten index is ill-defined, but one can still consider the space of boundary conditions as a TMF-module. Denote the space at the phase transition as $\mathcal{B}_0$, and these on the two sides as $\mathcal{B}_+$ and $\mathcal{B}_-$, one expects that there are maps of TMF-modules
$$
\mathcal{B}_-\rightarrow \mathcal{B}_0 \leftarrow \mathcal{B}_+
$$
but maybe not in the opposite directions due to the emergence of new massless degrees of freedom at the phase transition point allowing special boundary conditions that won't exist away from this point.

What happens at the phase boundary? One can get some idea by compactifying and consider the space of boundary conditions for 1d $\CN=2$ supersymmetric theories. The space of boundary condition in a 1d theory is almost the same as its space of quantum states. If one considers supersymmetric boundary conditions, then one gets the space of supersymmetric (a.k.a.~BPS) states. The TMF-module and its rank is respectively a refinement of the BPS Hilbert space and the Witten index. In the 1d case, there are two kinds of phase transitions. One is often referred to as ``wall-crossing'' which is when the spectrum of the theory become continuous. Another is when pairs of bosonic and fermionic states enter or leave the ground state. Describing and classifying the behavior of TMF and KO-module at phase boundary in a similar fashion should shed much light on the physics of phase transition in two and three dimensions.  It would be interesting to understand whether there are additional constraints of these spaces and maps between them, and, furthermore, whether homotopy theory can be effectively used to classify phase transitions of 3d theories.

\begin{remark}
    One can also use TMF-modules to tackle higher-dimensional theories, and there are different ways of achieving this. For example, one can consider the space of 2d $(0,1)$ surface operators in higher-dimensional theories, which will again be TMF-modules. Another strategy is to first compactify the $D$-dimensional theory to 3d, and then consider the space of 2d $(0,1)$ boundary conditions. (The first approach roughly corresponds to compactifying on a sphere $S^{D-3}$.) This can lead to more interesting structures, as there are often more ways to compactify to 3d, such as considering different manifolds to compactify on and different backgrounds (e.g.~holonomy and fluxes of global symmetries). Yet another --- which is in a sense more natural from the TQFT perspective --- method is to associate TMF-linear categories to 4d theories and higher categories to theories in even higher dimensions. We will give more comments about this generalization toward the end of this section. It would be interesting to see whether these approaches can be helpful in the study of phases of higher-dimensional theories. 
\end{remark}

\subsection{Boundary conditions of sigma models}

When the fields of the sigma model are non-dynamical and treated as background fields, the space of boundary conditions is naturally given by the space of maps $[X,\TMF]$, whose homotopy group is TMF$^*(X)$ --- the TMF cohomology of $X$ --- as, for any given value of the background $p \in X$, one has a well-defined 2d theory. When the 3d bulk is a genuine sigma model with dynamical fields, this is no longer correct, as 1) the ``family of theories'' has to be now a supersymmetric bulk-boundary coupled system and 2)  one can now has Dirichlet boundary conditions given by submanifolds of $X$. In this case, instead of getting the TMF-cohomology, one expects to have the \emph{homology} version:

\begin{conj}
    The homotopy type of the space of boundary conditions of a 3d $\mathcal{N}=1$ supersymmetric sigma model to $X$ is described by TMF$\,\otimes X_+$.
\end{conj}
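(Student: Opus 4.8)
The plan is to reduce the conjecture to two inputs that are conjectural but already invoked in \cref{sec:TMFQFT} — the Stolz--Teichner equivalence $\CT\simeq\TMF$ evaluated on a point, and a target-locality (excision) principle for sigma models — and then to invoke the universal characterization of $\TMF\otimes(-)_+$ as the unique $\TMF$-module-valued homology theory extending it.

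\emph{Step 1 (functoriality and spectrum structure), Step 2 (excision).} First I would argue that $X\mapsto\CB_d(\sigma_X)$ — the space of $d$-anomalous $2$d $(0,1)$ boundary conditions of the $3$d $\mathcal N=1$ sigma model $\sigma_X$ with \emph{dynamical} target $X$ — is covariantly functorial in $X$: a map $f\colon X\to X'$ pushes a Dirichlet condition supported on a submanifold $Z\subseteq X$ (together with its brane data) forward along $f$. As in \cref{sec:TMFQFT}, the $\CB_d(\sigma_X)$ assemble into an $\Omega$-spectrum $\CB(\sigma_X)$, and the $\CT\simeq\TMF$-action makes it a $\TMF$-module. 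The crucial second step is excision: a homotopy pushout $X=U\cup_W V$ of spaces should induce a homotopy pushout $\CB(\sigma_X)\simeq\CB(\sigma_U)\cup_{\CB(\sigma_W)}\CB(\sigma_V)$ of $\TMF$-modules. Physically, a boundary condition for $\sigma_X$ is, up to deformation, a brane in $X$; decomposing a brane along where it meets $W$ writes it in terms of branes in $U$ and in $V$ that agree on a brane in $W$, so cutting the target cuts the theory. I would also record the easier additivity $\CB(\sigma_{X\sqcup Y})\simeq\CB(\sigma_X)\oplus\CB(\sigma_Y)$ and, since a boundary condition is supported on a compact brane hence factors through a finite subcomplex, the fact that $X\mapsto\CB(\sigma_X)$ commutes with filtered colimits; this reduces everything to finite CW complexes.

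\emph{Step 3 (normalization and the universal property).} Evaluate on a point: $\sigma_{\mathrm{pt}}$ is the trivial $3$d theory $\FI_0$, so $\CB(\sigma_{\mathrm{pt}})=\CT\simeq\TMF$, which is precisely the Stolz--Teichner conjecture and which we assume. Building an arbitrary finite CW complex from cells via iterated pushouts (Step 2) starting from points (Step 3), the functor $X\mapsto\CB(\sigma_X)$ becomes a reduced, excisive, filtered-colimit-preserving functor to $\Mod_{\TMF}$ sending $S^0$ to $\TMF\oplus\TMF$. By the universal property of stabilization (equivalently, Goodwillie's classification of $1$-excisive functors), any such functor has the form $X\mapsto E\otimes X$ for a spectrum $E$; $\TMF$-linearity promotes $E$ to a $\TMF$-module, and the value on $S^0$ forces $E\simeq\TMF$. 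Passing between the reduced and unreduced pictures yields $\CB(\sigma_X)\simeq\TMF\otimes X_+$, as claimed.

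\emph{Main obstacle and caveats.} The hard part is Step 2: it presupposes a definition of sigma-model boundary conditions and a \emph{locality-in-target} principle with no current rigorous formulation; even granting a definition, verifying that the brane data on a submanifold $Z\subseteq X$ is exactly a boundary condition for $\sigma_Z$ twisted by the relative String structure of $Z\hookrightarrow X$ — so that one recovers a geometric-cycle description of $\TMF$-homology refining the $\sigma$-orientation $MString\to\TMF$, with the $2$d $(0,1)$ brane theory supplying the ``quantization'' that collapses the huge kernel of $\Omega_\ast^{String}\to\pi_\ast\TMF$ — is itself a substantial statement. A secondary subtlety is the homology-versus-cohomology dichotomy: for \emph{non-dynamical} background fields the family-of-theories heuristic gives $[X,\TMF]=\TMF^{-\ast}(X)$, and it is precisely the dynamical brane sector that replaces this contravariant dependence by the covariant, excisive one — the same mechanism responsible for the appearance of $L_b$ rather than $L^b$ in \cref{sec:ChernSimons}.
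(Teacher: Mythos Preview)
Your approach is genuinely different from the paper's. The paper does not argue via excision or a universal characterization of $\TMF$-linear homology theories; instead it sketches a Morse-theoretic physics argument: deform the 3d $\mathcal N=1$ sigma model by a generic real superpotential so that the low-energy theory has massive vacua at the critical points (with degree shifts given by the Morse indices), and then argue that the supersymmetric domain walls interpolating between vacua supply the differentials in what becomes the $E_1$-page of the Atiyah--Hirzebruch spectral sequence for $\TMF_*(X)$, with higher-index flow moduli providing the higher differentials. This is a 3d lift of Witten's Morse--Smale--Witten complex for 1d $\mathcal N=2$ quantum mechanics.

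The trade-offs are clear. Your route is cleaner once the axioms are granted: given functoriality, excision, and the Stolz--Teichner normalization at a point, the conclusion is forced by the universal property of $\TMF\otimes(-)_+$. But your Step~2 is doing essentially all the work, and ``locality in target'' for a dynamical sigma model is not a standard or even a physically transparent principle---the path integral sees the whole of $X$, metrics and superpotentials do not restrict along open covers, and your brane-cutting heuristic has no evident mechanism for producing the Mayer--Vietoris connecting map. The paper's Morse approach avoids postulating any such gluing law: it instead \emph{deforms} the theory until it concentrates on points, and then reads off the cell structure and differentials from the gradient-flow geometry, which is a familiar and well-tested maneuver in supersymmetric field theory. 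Conversely, the paper's argument presupposes that $X$ is a smooth manifold admitting a Morse function (and implicitly a framed flow category), whereas your axiomatic route, if excision could ever be justified, would extend formally to arbitrary CW complexes.

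A minor correction: in Step~3 you want the \emph{reduced} functor to send $S^0$ to $\TMF$, not $\TMF\oplus\TMF$; the latter is the unreduced value.
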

 
    A physics argument for it uses a deformation of the 3d $\mathcal{N}=1$ sigma model by a real superpotential and the  analysis of the low-energy theory \cite{Sigma}, similiar to Witten's work on the 1d supersymemtric sigma model \cite{Witten:1982im}. In fact, this is the 3d lift of Witten's setup, as 3d $\mathcal{N}=1$ theories exactly reduce to 1d $\mathcal{N}=2$ quantum mechanics studied in \cite{Witten:1982im}, and the question about the space of boundary conditions for the 3d theory becomes a question about the Hilbert space for 1d quantum mechanics. Given a generic superpotential with isolated critical points, the 3d theory at low energy has massive vacua labeled by the critical points. The Morse index gives the number of fermions with the negative mass, which shifts the degree. For Morse flows connecting different critical points, they lead to supersymmetric domain walls in the low-energy effective theory connecting the corresponding vacua. They act as ``differentials'' by relating the boundary conditions for the different vacua and eliminating them when the domain wall is condensed. This is very analogous to how the Morse--Smale--Witten complex is constructed and gives a model for computing the homology $H_*(X)$ of the sigma-model target $X$, but, in the 3d case, leads to the $E_1$-page of the Atiyah--Hirzebruch spectral sequence for TMF$_*(X)$. One key difference is that the domain walls in 3d that correspond to higher-index flows are non-trivial and give rise to higher differentials in later pages.\footnote{The physics of this system is incredibly rich, and one important ingredient is that the theory on the domain wall being itself a sigma model to the moduli space of Morse flows, and one has to use the entire framed flow category as defined in \cite{cohen1995morse} and subsequence works to encode the information of the system at low energy. See \cite{Sigma} for more details.}

\subsection{KU- and KO-modules from 2d theories.} Similarly, one can associate to a 2d supersymmetric theory (or, even simpler, a 2d TQFT) $\FT$ the space of 1d $\mathcal{N}=1$ supersymmetric boundary conditions $\CB(\FT)$. We can also restrict to these boundary conditions with time reversal symmetry, $\CB^{\mathbb{R}}(\FT)$. They are expected to be respectively a KU- and a KO-module,  assuming that the space of such 1d $\mathcal{N}=1$ theories is the KU/KO-spectrum (see \cite{berwickevans2023familiesanalyticindex11dimensional} for an exposition of this following the Stolz--Teichner program). 

To have this space of boundary conditions well-behaved under deformations of the bulk theory, we need to assume that the bulk has 2d $(1,1)$ supersymmetry, and for the case of KO-modules, the bulk should also have a time-reversal symmetry.

One reason to consider KO-modules associated with 2d theories is that they are related to the TMF-modules associated with 3d theories via circle compactification, and can serves as the ``stepping stone'' to understand the latter. In fact, since one can decompose a theory on $S^1$ according to the KK-momentum, one actually gets a KO$[[q]]$-module. 

\begin{remark}
For topological (or ``gapped'') theories, sectors with higher KK-momentum are trivial, and considering the KO$((q))$-module is not expected to gain more information than only keeping the zero-momentum part of the boundary theory and view it as a KO-module. 
\end{remark}

For a general TMF-module, the reduction to KO$((q))$-module doesn't factor through a KO-module. However, an interesting class, given by TMF$\otimes X$, with $X$ a spectrum, does factor. For all examples we encountered with bulk being gapped, the TMF-module is always of this type. This motivates the following question.

\begin{question}
If $\FT$ is gapped, is $\CB(\FT)$ as a TMF-module always of the form TMF$\otimes X$ with $X$ a spectrum?    
\end{question}

Another, and perhaps more interesting from the physics perspective, motivation to consider KO-modules associated to 2d theories is to distinguish gapped phases, which are system with no massless degree of freedom. For such a gapped bulk theory, while information in the KU-module is basically equivalent to that of the ground states Hilbert space, the KO-module captures finer information of the bulk theory and can be used to distinguish phases.  

\begin{example}
    There are three closely related phases in 2d: the $\Z_n$ gauge theory (also known as Dijkgraaf--Witten theory \cite{dijkgraaf1990topological} or $\Z_n$ topological order), the $U(1)_n$ (or $U(1)_{-n}$) gauged WZW model, which can be viewed as the dimensional reduction of the $U(1)_n$ Chern--Simons theory, and the supersymmetric $\mathbb{CP}^{n-1}$ sigma model. They all have the same algebra for local operators and are sometimes believed to be identical as 2d phases. However, the KO-module associated to them are in general different, suggesting that they are actually distinct in these cases when considered as phases with time reversal symmetry. Their KO-modules are listed below. 
\begin{center}
\begin{tabular}{c|c|c|c|c|c}
    Theories & $n=1$ & $n=2$ & $n=3$ &  $n=4$ \\\hline
    $\Z_n$ & KO & KO$\oplus$KO & KO$\oplus$KU & KO$\oplus$KU$\oplus$KO & \ldots\\\hline
    $U(1)_{-n}$ & KO[2] & KO$\oplus$KO[4] & KO[6]$\oplus$KU & KO$\oplus$KU$\oplus$KO &\ldots\\\hline
    $\mathbb{CP}^{n-1}$ &  KO & KO$\oplus$KO[2] & KO$\oplus$KU & KO$\oplus$KU$\oplus$KO[6] & \ldots
\end{tabular}
\end{center}
The result for $\Z_n$ is standard;\footnote{For a finite group $G$, the category of finite-dimensional real $G$-representations decomposes as a product of categories indexed over the set of irreducible real $G$-representations $V$. Each of these categories is equivalent to that of vector spaces over the division algebra of endomorphisms of $V$, which can be $\R$ (real type), $\C$ (complex type) or $\H$ (quaternionic type). Correspondingly, the $G$-equivariant KO-theory of a point (also known as the $G$-fixed points of $\mathrm{KO}_G$) is equivalent to a product of KO, KU and $\mathrm{KSp} \simeq \mathrm{KO}[4]$. For $\Z_n$, the real irreducible representations are indexed by the sets $\{[\pm k]\}$ for $[k]\in\Z_n$, with complex type if the set has two elements and real type if it has one element.}  for $U(1)_{-n}$ it is obtained from reducing the TMF-module; for $\mathbb{CP}^{n-1}$ it is given by KO$\wedge\mathbb{CP}_+^{n-1}$.
\end{example}

For the non-extended ``Pin$^-$-TQFTs'' (i.e.~TQFTs with time-reversal symmetry), it has two additional structures. One is a $\Z_4$ action on Hilbert spaces from the time reversal symmetry $T$ that satisfies $T^2=(-1)^F$ in the Euclidean signature. Another is a ``1-form monoid symmetry'' from connected sum with $\mathbb{RP}^2$. In simple cases, their action on the TQFT Hilbert spaces on $S^1$ with the two Pin$^-$-structures determine whether one gets KO or KU and the degree shifts. 

To actually study the KO-module that describes the space of boundary conditions, one would have to first find a right framework to define Pin$^-$-TQFTs as an extended TQFT. We will leave this more general and careful study to future work.

\subsection{TMF-modules from Abelian Chern--Simons theory}\label{sec:ChernSimons}

Recall from \cref{sec:PhysicsMotivation} that we took a 6d theory $T$ as the physical starting point of our paper. 
When $T$ is an abelian tensor multiplet, the theory $T[M_3]$, after removing a certain decoupled degree of freedom, is given by an abelian Chern--Simons theory. The Chern--Simons level is now a matrix identified with the ``linking matrix'' of $M_3$.\footnote{A presentation for the linking form of $M_3$ will have redundancies given by the 3d Kirby moves, which on the Chern--Simons side corresponds to quantum equivalences. See \cite{Belov:2005ze, Kapustin:2010hk} for classification of abelian Chern--Simons theories up to such equivalences. However, as what $T[M_3]$ contains is the supersymmetric version of the Chern--Simons theory with additional fermions, some of the equivalence only holds up to a shift, which exactly matches the shifts seen from TMF-modules.
}
 One way to encode this data is exactly by a symmetric form on a lattice $\mathbf{\Z}^r$, which determines a Chern--Simons form for $U(1)^r$. The condition that the form is integral match that the condition for the Chern--Simons theory to be well-defined spin-TQFT at the quantum level. See \cite{Witten:2003ya,Belov:2005ze} for a review of quantum abelian Chern--Simons theories.  

The Hilbert space of a Chern--Simons theory on a Riemann surface $\Sigma$ can be obtained via geometric quantization, where it is identified with certain space of theta functions. More precisely, the level matrix $k$ determines a line bundle $L_k$ on the moduli space of flat $U(1)^r$ connections on $\Sigma$, and the Hilbert space for the abelian Chern--Simons theory is identified with holomorphic sections of this line bundle. When $\Sigma=\mathcal{E}$ is an elliptic curve, $L_k$ is a line bundle over $(\mathcal{E}^\vee)^{\times r}$. One goal of the present paper, interpreted in terms of the Chern--Simons theory, is to ``upgrade'' this vector space of holomophic sections of $L_k$ to a TMF-module. In the remainder of this section, we illustrate that the mathematical results from the TMF theory is, in many cases, consistent with the physics predictions, and, in other cases, can be used to give new physics predictions.

\begin{example}
    $U(1)_0$. Although this is strictly speaking not a Chern--Simons theory due to the vanishing Chern--Simons level, it fits naturally in our construction, and the associated TMF-module is $$\CB_{0}=\TMF\oplus \TMF[-1].$$ The two copies are generated by the Dirichlet and Neumann boundary conditions. The shift of the second copy reflects the fact that, for the Neumann boundary condition, there is a left-moving fermion in the $(0,1)$ vector multiplet. In other words, the supersymmetric Neumann boundary condition is only a genuine boundary condition of $U(1)[1]:=U(1)\otimes \FI_{-1}$ where a gravitational Chern--Simons theory of level $-1$ is included. The $U(1)$ gauge theory is often considered to have an equivalent description of a dual photon (which can be thought of as a 3d sigma model to $S^1$).
    Interestingly, there is an overall shift if one considers the TMF-module associated with the dual photon,
    $$\CB_{\text{dp}}=\TMF\oplus\TMF[1],$$
    whose Neumann boundary condition (dual to the Dirichlet boundary condition for the $U(1)$ guage field) is in degree 1 as it has a boundary chiral multiplet with a right-moving fermion. Therefore, the dual photon description is not exactly equivalent with the gauge theory description in our setting, and the degree shift can be understood as fermion modes on the duality interface between them. The interface will look like either a Neumann boundary condition for the gauge field, with the theta angle identified with the dual photon on the other side, or alternatively a Neumann boundary condition for the dual photon with an $S^1$-valued chiral multiplet, whose $U(1)$ symmetry is coupled with the bulk gauge field on the other side. For either choice, one has to include an invertible theory with either gravitational Chern--Simons level $-1$ on the gauge theory side or level $1$ on the dual photon side, and the duality is really between $$U(1)[1]\longleftrightarrow \text{dual photon}.$$ Therefore, in our setting, gauge theory and the dual photon are only equivalent up to an invertible theory. 
\end{example}

\begin{remark} 
    For the non-supersymmetric version, the $U(1)$ gauge theory is parity invariant. For the supersymmetric version, we actually have $\overline{U(1)_0}=U(1)_0[1]$ is equivalent to the dual photon.\footnote{One way to think about this is that there is a background gravitational Chern--Simons level of $\frac12$ for the $U(1)$ theory while one with level $-\frac12$ for the dual photon. Such a fractional level is needed if we want to have an integral level once the massless fermions in the supersymmetric theory are integrated out. Physically, one can choose the level to be any integer plus $\frac12$, but the present choice is the one that is compatible with the convention we use on the TMF-module side. This level will be flipped upon parity reversal, leading the degree shift of $\pm1$.} This identification at the level of TMF-modules is  $$\CB_0[1]\simeq \bar{\CB}_0\simeq\CB_0^\vee=\TMF\oplus\TMF[1].$$
    In our construction for invariants of 3-manifolds, this is the source for the degree shift when reversing the orientation of $M$ to $\overline M$ in \cref{prop:Zduality}.
    This non-trivial degree shift can also be interpreted as the boundary condition for $U(1)\otimes\overline{U(1)}$ using the folding trick carries gravitational anomaly. Similarly, for the dual photon, we have 
    $$\CB_{\text{dp}}[-1]\simeq \bar\CB_{\text{dp}}\simeq \CB_{\text{dp}}^\vee=\TMF\oplus\TMF[-1].$$
    One might naively expect that these map are just the identity, but they are actually not. For the first map, one can view it as a pairing
    $$
    \CB_0\otimes \CB_0\rightarrow \TMF[-1],
    $$
    whose information is encoded in a 2-by-2 $\pi_*(\TMF)$-valued matrix giving the pairing between the two types of basic boundary conditions, with the four entries
    $a_{\text{NN}}\in \pi_{-1}(\TMF), \;a_{\text{ND}}=a_{\text{DN}}\in \pi_{0}(\TMF),$ and $a_{\text{DD}}\in \pi_{1}(\TMF)$. It is not hard to see that $a_{\text{ND}}$ is indeed the identity, as one gets a trivial 2d theory with the Dirichlet and Neumann boundary condition on each side. Simply for degree reason, one obtains that $a_{\text{NN}}=0$, as it is a 2d abelian gauge theory with a left-moving fermion in degree $-1$. A deformation to a theory that spontaneously break supersymmetry is given by turning on different theta angles on the two Neumann boundary conditions. On the other hand, $a_{\text{DD}}=\eta$, as the 2d theory now is an $S^1$ sigma model. The analysis from the perspective of the dual photon is similar, except the role of the two types of boundary conditions is switched -- one now gets an $S^1$ sigma model with two Neumann boundary conditions, while two Dirichlet boundary conditions lead to a theory that can spontaneously break supersymmetry if the fixed values of the dual photon are different on the two boundaries. This matrix is the same as in \cref{prop:explicitduality} up to a change of basis.
\end{remark}

\begin{remark}
The dual photon can be viewed as a 3d sigma model, with target space $S^1$, and the TMF-module assigned to it is also the TMF-homology of $S^1$,
\begin{equation}
\TMF_*(S^1)=\TMF\oplus \TMF[1].
\end{equation}
On the other hand, the $U(1)$ gauge theory is associated with the TMF-cohomology of $S^1$,
\begin{equation}
    \TMF^*(S^1)=\TMF\oplus \TMF[1],
\end{equation}
which classify $S^1$ families of 2d theories. From physics, this $S^1$ family can be viewed as parametrized by the boundary theta angle. 
\end{remark}

\begin{example}
    $U(1)_1$ and $U(1)_{-1}$. The TMF-modules associated with them are TMF$[-3]$ and TMF$[2]$ respectively (cf.~\cref{ex:Oke}). 
    This is compatible with the expectation that, at these two levels, the $U(1)$ Chern--Simons theory is equivalent to a gravitational Chern--Simons theory at level 3 and $-2$.\footnote{The non-supersymmetric $U(1)$ Chern--Simons theories at level $\pm 1$ is expected to be equivalent to a gravitational Chern--Simons theory of level $\pm2$. However, with 3d $\mathcal{N}=1$ supersymmetry, there is an additional unit of shift given by having the gaugino mass changing sign. Similar to the $U(1)_0$ case, one can also think of it as the theories have included a ``bare background gravitational Chern--Simons term'' at level $\frac{1}{2}$, which is shifted to $1$ (or 0) for $U(1)_1$ (or for $U(1)_{-1}$) after integrating out the gaugino.} The shift also enables two ``canonical'' boundary conditions, one given by a left-moving compact boson for $U(1)_{-1}$ and the other given by a right-moving compact boson with its fermionic partner for $U(1)_1$, to be placed in degree zero. 
    \end{example}
    
    \begin{remark}
        This is a good example to illustrate some differences from the physics perspective between the homology version $L_b$, which describes boundary conditions of the dynamical abelian Chern--Simons theory, and the cohomology version $L^b$, which corresponds to having a non-dynamical bulk theory. If one considers the cohomology version for $U(1)_{\pm1}$, then one has $L^{-1}=$TMF$[3]$ and $L^{1}=$TMF$[-2]$ generated by these same two boundary conditions,\footnote{Note that the $L^{-1}$ actually corresponds to $U(1)_1$ and vice versa due to the convention that we are using where $L_b=(L^{-b})^{\vee}$.} but as the bulk theory is non-dynamical, a gravitational Chern--Simons term is needed to cancel the anomaly, giving the shift in degree. These two special boundary conditions, when regarded as standalone 2d theories, is in the trivial TMF class, but one can put them together in a $U(1)$-equivariant way. The resulting 2d theory obtained by pairing up the left- and right-moving chiral boson is an $S^1$ sigma model. This can be formulated as the non-trivial pairing map
    $$
L^1\otimes L^{-1}=\TMF[1]\rightarrow \TMF
    $$
    given by $\eta$. One expects from physics that there is a similar map for general $b$. For homology, the bulk is dynamical, and one can not simply forget about it. Instead, there are maps in the opposite direction from TMF to $L_1$, $L_{-1}$ with the image being the natural boundary conditions mentioned above, as well as one to $L_{-1}\otimes L_{1}=\TMF[-1]$ again given by $\eta$.
    \end{remark}

\begin{example}
    $U(1)_2$ and $U(1)_{-2}$. By \cref{ex:Oke}, the TMF-modules associated with them are respectively the $(-5)$-fold shift of the cone of $\nu\colon \TMF[3]\to \TMF$ and $\mathrm{Cone}(\nu)$ itself. This shift is compatible with the duality \cite{Seiberg:2016rsg}
    $$U(1)_2\otimes U(1)_{-1} \simeq U(1)_{-2}\otimes U(1)_{1}$$
    where both sides are associated with Cone$(\nu)[-3]$.
\end{example}

\subsection{4d theories and TMF categories} One can go one categorical level higher by considering four-dimensional theories and their boundary conditions.\footnote{This is sometimes necessary as the 3d theory itself has anomaly, as is often the case when the theory is obtained from the compactification of a non-trivial 6d theory instead of just the free tensor multiplet. The 6d theories are often themselves ``relative theories,'' living on the boundary of 7d theories. Then $T[M_3]$ obtained from compactification also naturally lives on the boundary of a 4d anomaly theory (see, e.g.,~\cite{Gukov:2020btk} for a more detailed discussion about anomalies obtained from such compactification).} Fix a 4d supersymmetric theory ${T}$ and consider its category $\mathcal{C}_{{T}}$ of 3d $\mathcal{N}=1$ boundary conditions. Morphisms are identified with 2d $(0,1)$ interfaces between two such boundary conditions. As the space of such interfaces is naturally a TMF-module by stacking with 2d $(0,1)$ theories, one expects that $\mathcal{C}_{{T}}$ is naturally a TMF-linear category.

From the point of view of the 4d TQFT, this is about extending the TQFT functor down to associate TMF categories to 2-manifolds. And one can ask, in the ``toy model,'' what are these categories. 

\begin{example}
    Categories $\mathcal{C}_{S^2}$ associated with $S^2$. The 4d theory in this case is a ``tensor multiplet'' which can also be think of as a sigma model to $\mathbb{R}\times S^1$. The $\mathbb{R}$ direction represents a ``center of mass'' degree of freedom that we often decouple when building the TQFT from the toy model. So one can think of it as an $S^1$ sigma model. Then a natural candidate for the category $\mathcal{C}_{S^2}$ is then the category of sheaves of TMF-modules over $S^1$.
\end{example}

This generalizes to a 4d sigma model with arbitrary target $X$ with $G$ action, which is naturally associated with the category of $G$-equivarient sheaves on $X$. When $X$ is a point, one gets the category associated with the 4d $G$ gauge theory. 

Previously, we have seen that there are two versions of TMF-modules associated with a 3d theory by considering it as either dynamical or non-dynamical. Should we also have two categories associated with the 4d theory? Interestingly, this might not be necessary.

\begin{remark}
    From the category of sheaves of TMF-modules $\mathcal{C}(X)$, one can get both the cohomology TMF$^*(X)$ and the homology TMF$_*(X)$ from two different functors applied to the constant sheaf with value TMF. These are the right adjoint $p_*$ (global sections) and the left adjoint $p_!$ of the pullback functor $p^*$ along $p\colon X\to \mathrm{pt}$ or, equivalently, to the inclusion of constant sheaf;  the existence of $p_!$ follows from \cite{VolpeSix} or from the adjoint functor theorem.
\end{remark}

\appendix

\section{Intersection forms and linking pairings} \label{sec: homology}

The goal of this appendix is to summarize the relevant homological data of $3$- and $4$-manifolds, and in particular the interplay between the torsion linking pairings of $3$-manifolds and the intersection forms of $4$-manifold cobordisms.
This is used in the formulation of manifold invariants in Section \ref{sec: invariants of manifolds}. While much of this material is classically known, we could not find the precise statements in the literature. For convenience of the reader we derived and stated them, for example Lemma \ref{lem: pm equivalence} and the formulas in Section \ref{sec: cobordism} relating the linking forms on $Y_i$ and the intersection form on $W$ for a $4$-manifold cobordism $(W; Y_0, Y_1)$.

\subsection{Homological data needed to define $\TMF$ modules for $3$-manifolds.} \label{sec: data}
All homology and cohomology groups are taken with $\Z$ coefficients. Given an abelian group $A$, denote $A^*:={\rm Hom}(A, \Z)$.

Given a connected, closed $3$-manifold $Y$, consider a $4$-manifold $X$ bounding it, with $H_1(X)=0$. More concretely, a simply-connected $X$ with $\partial X=Y$ can be constructed by attaching $2$-handles to the $4$-ball; this viewpoint is useful because \cref{sec: invariants of manifolds} defines maps of $\TMF$-modules associated with 4-dimensional cobordisms assembled of $2$-handles. The construction in Sections \ref{sec: Looijenga line bundles}, \ref{sec: Properties of Looijenga line bundles} assigns a TMF-module to the intersection form on $X$. Lemma \ref{lem:stable} shows that the construction depends only on the stable equivalence class of the intersection form. Here we formulate the homological data needed to carry out the construction.

Recall (cf.~\cite{GordonLitherland}) that the intersection form on the $4$-manifold $X$ as above gives a presentation of the linking pairing on $Y$, in the following sense.  Consider the long exact sequence
\begin{equation} \label{present}
H_2(X)\to H_2(X,Y)\to H_1(Y)\to 0.
\end{equation}
Since $H_1(X)=0$, $H_2(X)$ is a free abelian group.
By Poincar\'{e} duality $H_2(X,Y)\cong H^2(X)\cong H_2(X)^*$, and the first map in the above sequence corresponds to the intersection pairing on $H_2(X)$. The linking form on the torsion subgroup $\tau H_1(Y)$, 
\[ \lambda\colon\tau H_1(Y)\times \tau H_1(Y)\to \Q /\Z,\]
may be computed in terms of this intersection pairing, see \cite[p.~60]{GordonLitherland} and \cite[Section 2]{KrFr:2025} for more details. Next we show that, in fact, the linking form determines a stable equivalence class of such presentations, thus providing the homological data needed to define the TMF-module. Denote $A:=H_1(Y)$, $B:=H_2(X)$; the torsion subgroup of $A$ will be denoted $\tau A$.
Consider a presentation 
\begin{equation} \label{present1}
B
\mathrel{\mathop{\longrightarrow}^{\langle\, , \, \rangle}}
B^*\to A
\end{equation}
of the linking pairing on $TA$. We say that two such presentations (bilinear forms $(B, \langle\, , \, \rangle)$) are {\em $\pm$equivalent} is they become isomorphic after direct summing with some number of $\langle\pm 1\rangle$.

\begin{lemma} \label{lem: pm equivalence}
The linking pairing on $\tau A$ and the rank of $A$ determine the equivalence class of $(B, \langle\, , \, \rangle)$ up to  $\pm$equivalence.
\end{lemma}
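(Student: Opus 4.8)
\noindent The plan is to reduce the statement to the classification of nondegenerate symmetric bilinear forms over $\Q$ together with the classification of torsion linking pairings. First I would split off the free part: write $A = \tau A \oplus F$ with $F$ free of rank $r$. A presentation $B \xrightarrow{\langle\,,\,\rangle} B^* \to A$ of the linking pairing on $\tau A$ has the property that the map $\langle\,,\,\rangle\colon B\to B^*$ has cokernel $A$, hence its image has rank $\operatorname{rk}(B) - r$, so exactly $r$ of the ``eigenvalues'' are zero and the form $\langle\,,\,\rangle$ becomes nondegenerate after quotienting by its radical. Thus $B \cong B_0 \oplus \Z^r$ where $\Z^r$ carries the zero form (this $\Z^r$ summand is forced, up to isomorphism, by the rank of $A$), and it remains to show that the nondegenerate form $(B_0, \langle\,,\,\rangle_0)$ is determined up to $\pm$equivalence by the torsion linking pairing $\lambda$ on $\tau A = \operatorname{coker}(B_0 \to B_0^*)$.

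\noindent The key input is a theorem of Wall (see Wall, \emph{Quadratic forms on finite groups, and related topics}, or the treatment in Durfee, or \cite{GordonLitherland}): two nondegenerate symmetric bilinear forms over $\Z$ that present the same discriminant-form (= torsion linking pairing) and have the same signature are stably equivalent, i.e.\ become isomorphic after adding copies of $\langle 1\rangle$ and $\langle -1\rangle$. So the only remaining discrepancy between two presentations $(B_0, \langle\,,\,\rangle_0)$ and $(B_0', \langle\,,\,\rangle_0')$ of the same $\lambda$ is the signature $\sigma$. But over $\Q$, a nondegenerate symmetric bilinear form with a fixed discriminant-form and varying signature: after adding $\langle 1\rangle \oplus \langle -1\rangle$ repeatedly one can match the signatures, precisely because $\langle 1\rangle \oplus \langle -1\rangle$ is the generator of the relevant stabilization and changing the number of $\langle 1\rangle$'s versus $\langle -1\rangle$'s shifts the signature by $2$ while preserving the discriminant form. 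Hence any two presentations of $\lambda$ with the \emph{same} number of zero eigenvalues $r$ become isomorphic after adding suitable numbers of $\langle 1\rangle$'s and $\langle -1\rangle$'s — which is exactly $\pm$equivalence.

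\noindent Concretely, the steps I would carry out are: (1) extract the free rank: show $r = \operatorname{rk}(A) = \operatorname{rk}(B) - \operatorname{rk}(\operatorname{im}\langle\,,\,\rangle)$ is determined by the data, and peel off a zero summand $\Z^r$ so the residual form is nondegenerate; (2) invoke Wall's stable classification to see that two nondegenerate presentations of the same $\lambda$ differ only in signature after adding $\langle\pm 1\rangle$'s; (3) observe that adding $\langle 1\rangle \oplus \langle -1\rangle$ preserves both the discriminant form and nondegeneracy while we can also add $\langle 1\rangle$ or $\langle -1\rangle$ singly to adjust the signature by $\pm 1$, so the signature obstruction is killed within the $\pm$equivalence relation; (4) conclude that $\lambda$ and $r = \operatorname{rk}(A)$ together pin down $(B,\langle\,,\,\rangle)$ up to $\pm$equivalence.

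\noindent The main obstacle is step (2): correctly quoting and applying the right version of Wall's theorem on stable equivalence of integral symmetric bilinear forms with prescribed discriminant form. One must be careful that ``discriminant form'' in Wall's sense matches the linking pairing as defined via \eqref{present}–\eqref{present1} (the sign conventions and the distinction between the form on $\operatorname{coker}$ and its negative), and that the stabilization in Wall's theorem is by $\langle 1\rangle$ and $\langle -1\rangle$ rather than by a hyperbolic plane — these agree for our purposes but the bookkeeping needs care. A secondary subtlety is that we work with possibly \emph{degenerate} forms (the rank-$r$ radical), so one should make sure the reduction in step (1) is canonical enough that the zero summand genuinely only depends on $\operatorname{rk}(A)$; this is elementary linear algebra over $\Z$ but worth stating cleanly.
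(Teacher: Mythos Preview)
Your proposal is correct and follows essentially the same strategy as the paper's proof: split off the radical to reduce to the nondegenerate case, then invoke a stable classification theorem for integral lattices with prescribed discriminant form. The only notable difference is that the paper cites Nikulin's theorem (two nondegenerate lattices have isomorphic discriminant forms iff they become isomorphic after adding suitable unimodular lattices) together with the observation that all unimodular forms are $\pm$equivalent (via the Witt group and the classification of indefinite unimodular forms), whereas you cite Wall's version directly and then kill the signature discrepancy by hand; these are equivalent packagings of the same result. The paper also carries out the radical-splitting step in the manifold setting using the long exact sequence of $(X,Y)$, while you do it abstractly---both are fine, and your saturation argument for why the radical splits off is exactly what is needed.
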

\begin{proof}  We start by recalling the following classical results.
The Witt group $W(\Z)$ over the integers is isomorphic to $\Z$, where the isomorphism is given by the signature \cite{MilnorHusemoller}.
The Witt group consists of the equivalence classes of unimodular bilinear forms, where two forms are equivalent if they become isomorphic after stabilization by metabolic forms. 
Thus it follows from the classification of indefinite unimodular forms that any two {\em unimodular forms are $\pm$equivalent}.

Next consider integral symmetric bilinear forms $B\times B\to \Z$ which are non-degenerate over $\Q$ ({\em lattices} in the terminology of \cite{Nikulin}.) 

\begin{thm}(Nikulin, \cite[Theorem 1.3.1]{Nikulin})
Two lattices induce isomorphic bilinear forms on $B^*/B$ if and only if they become isomorphic after direct summing with suitable unimodular lattices.
\end{thm}

It follows from the preceding discussion of unimodular forms that two lattices induce isomorphic bilinear forms on $B^*/B$ if and only if they are $\pm$equivalent.

Finally, consider 
an arbitrary (no non-degeneracy assumption) bilinear form. The following proof is phrased in the manifold context, that is $A=H_1(Y)$, $B=H_2(X)$, which is of main interest in this paper, but the analogous proof may be given in the abstract setting of a bilinear form \eqref{present1}.
Extend the sequence \eqref{present} to the left; in the abstract setting this corresponds to adding the kernel of the bilinear pairing on the left in \eqref{present1}:
$$0\to H_2(Y)\mathrel{\mathop{\to}^{\alpha}} H_2(X)\to H_2(X,Y)\mathrel{\mathop{\to}^{\beta}} H_1(Y)\to 0.$$

Note that the following diagram commutes, giving an identification of $\alpha$ and $\beta^*$.
\[
\begin{tikzcd}
H_1(Y)^* \arrow{d}[swap]{\beta^*} \arrow[r,"\cong"]
& H^2(Y)^* \arrow[d] \arrow[r,"\cong"] &
H_2(Y)   \arrow{d}[swap]{\alpha} \\
H_2(X,Y)^*    \arrow[r,"\cong"] &
H^2(X)^*   \arrow[r,"\cong"] &
H_2(X) \\
\end{tikzcd}
\]
The isomorphisms on the left are given by Poincar\'e duality, and the isomorphisms on the right follow from the universal coefficient theorem for cohomology. Consider the decomposition of $H_1(Y)$ as the direct sum of the free and torsion subgroups, $H_1(Y)= F\oplus T$. Let $\pi$ denote the projection $H_1(Y)\longrightarrow F$. The composition $\pi\circ \beta$ equals $\alpha^*$. Since $\alpha^*$ is surjective, it follows that $H_2(X)$ splits as a direct sum ${\rm im}(\alpha)\oplus C$, where $C$ is free abelian. It follows that $C\longrightarrow C^*\longrightarrow T$ is a presentation of the linking form on $T$. The restriction of the intersection form to $C$ is non-degenerate over $\Q$, the case considered above.
This concludes the proof of Lemma \ref{lem: pm equivalence}.
\end{proof}

\subsection{Computing the torsion linking pairing in terms of the intersection pairing of $4$-manifold cobordisms} \label{sec: cobordism}
It is reasonable to expect that our construction of the invariants of 3- and 4-manifolds factors through a ``homological cobordism category'' formulated in terms of homology groups, torsion linking pairings of 3-manifolds, the intersection pairings of 4-manifolds, and their interaction. Defining such a category is outside the
scope of this paper. However, we make a step in this direction and show how to compute the torsion linking pairing in terms of the intersection pairing of $4$-manifold cobordisms. This is a generalization of the formula (cf. \cite[Section 3]{GordonLitherland} and \cite[Lemma 2.1]{KrFr:2025}) expressing the linking form of the boundary $\partial W$ of a compact 4-manifold $W$ with $H_1(M)=0$ in terms of the intersection form on $W$. 

Consider a compact, oriented $4$-manifold $W$ with $\partial W=\overline Y_0\sqcup Y_1$. We assume 
\begin{equation}
H_1(W,Y_i)=0, \; i=1,2,
\end{equation}
this is a homological analogue of $W$ obtained from $Y_0\times I$ by attaching $2$-handles. It follows that $H_2(W,Y_0)\cong H_2(W, Y_1)$ are free abelian groups. A key ingredient in our calculations is the following diagram consisting of segments of two long exact sequences: 
\begin{equation} \label{fibration diag2}
\centering
\begin{tikzcd}[row sep=tiny, column sep=small]
H_2(W, Y_0) \arrow[dr,shorten >=1.5ex] & & H_1(Y_0)\\
 & H_2(W, Y_0\cup Y_1)  \arrow[two heads,ur,shorten <=1.5ex,"\alpha_0", pos=0.7] \arrow[two heads,dr,shorten <=1.5ex,"\alpha_1", pos=0.7, swap]  & \\
H_2(W, Y_1) \arrow[ur,shorten >=1.5ex] & & H_1(Y_1)
\end{tikzcd}
\end{equation}

For $i=0,1$ consider the free and torsion subgroups of the first homology of $Y_i$, $H_1(Y_i)\cong F_i\oplus T_i$. The goal is to relate the linking forms on $T_0, T_1$ and the intersection pairing on the cobordism $W$. To this end, consider $a_0, b_0\in \tau H_1(Y_0)$, and let $x,y$ be elements in $H_2(W, Y_0\cup Y_1)$ with $\alpha_0(x)=a_0, \alpha_0(y)=b_0$.
Denote $a_1:={\alpha}_1(x), b_1:={\alpha}_1(y)$. A priori, there are three cases: (1) $a_1, b_1\in \tau H_1(Y_1)$, (2) one of $a_1, b_1$ is a torsion elements and the other one is in $F_1$, and (3) $a_1, b_1\in F_1$, for any $x\in {\alpha_0}^{-1}(a_0)$ and $y\in {\alpha_0}^{-1}(b_0)
$. However, it follows from the non-degeneracy of the torsion linking pairing 
$$\tau H_1 (W)\times \tau H_2(W, Y_0\cup Y_1)\longrightarrow \Q/\Z
$$
that the case (3) is possible only when $\lambda(a_0,b_0)=0$. Therefore we focus on computing the torsion linking pairing in the first two cases.

\begin{enumerate}
    \item 
Suppose $a_1, b_1\in \tau H_1(Y_1)$.
Consider $\Delta\in\Z$ such that $\Delta a_i=\Delta b_i=0.$ For $i=0,1$ consider relative cocycle representatives $X, Y$ of $x,y$ in $(W, Y_0\cup Y_1)$ and cocycle representatives $A_i, B_i$  of $a_i, b_i$ in $Y_i$. Let $U_i, V_i$ be cochains in $Y_i$ such that $\partial U_i=\Delta A_i$, $\partial V_i=\Delta B_i$. It follows that $$
\widetilde X := \Delta X -U_0-U_1, \; \widetilde Y := \Delta Y -V_0-V_1
$$
are cocycles in $W$. A geometric argument, outlined below and  similar to that in \cite[Lemma 2.1]{KrFr:2025}) implies a relation in $\Q/\Z$ between the intersection pairing $H_2(W)\times H_2(W)\longrightarrow \mathbb Z$ and the linking forms:
\begin{equation} \label{eq: linking intersection} 
\lambda (a_0, b_0) - \lambda(a_1, b_1) \equiv-\frac{[\widetilde X]\cdot [\widetilde Y]}{\Delta^2} \; \, ({\rm mod}\; 1) .
\end{equation}
Indeed, $$\lambda (a_i, b_i)\equiv \frac{U_i\cdot B_i}{\Delta} \;\,  ({\rm mod}\; 1).$$ Geometrically, $\widetilde X$ consists of $\Delta X\subset W$ capped off by $U_0$ in $Y_0$ and $U_1$ in $Y_1$, with the analogous description of $\widetilde Y$. Push $\widetilde X$ by an isotopy slightly into the interior of $W$. Then transverse intersections $\widetilde X\cap \widetilde Y$ fall into three categories: intersections $\Delta X\cap \Delta Y$, and $U_i\cap \Delta Y$, $i=0,1$. The intersection number $\Delta X \cdot\Delta Y$ is a multiple of $\Delta^2$, so it does not contribute to \cref{eq: linking intersection} (mod 1). The other two terms, $U_0\cdot \Delta Y$ and $U_1\cdot \Delta Y$ are precisely
$\lambda (a_0, b_0) - \lambda(a_1, b_1)$ with an additional factor $\Delta$ since the intersections are counted with $\Delta Y$.

\item
Suppose $a_1\in \tau H_1(Y_1)$ and $b_1\in F_1$. Using the notation as above, consider a cocycle $
\widetilde X := \Delta X -U_0-U_1$ in $W$ and a relative cocycle $\widetilde Y := \Delta Y -V_0
$ in $(W, Y_1)$. The analogue of \cref{eq: linking intersection} reads $$ \lambda (a_0, b_0)  \equiv -\frac{[\widetilde X]\cdot [\widetilde Y]}{\Delta^2} \; \, ({\rm mod}\; 1) .$$
The right-hand side refers to the intersection pairing ${H_2(W)\times H_2(W, Y_1)\rightarrow \mathbb Z}$.
\end{enumerate}

\bibliographystyle{alpha}
 \bibliography{bibl}
    \end{document}